\documentclass[11pt,letterpaper]{amsart}

\usepackage[T1]{fontenc}
\usepackage[utf8]{inputenc}
\usepackage{lmodern}
\usepackage{amsmath,amssymb,amsfonts,mathrsfs}
\usepackage{microtype}
\usepackage[hidelinks]{hyperref}
\hypersetup{pdftitle={Stability of the Riemannian positive mass theorem in all dimensions},
pdfauthor={Gaoming Wang and Yiyue Zhang}}

\allowdisplaybreaks
\numberwithin{equation}{section}

\newtheorem{theorem}{Theorem}[section]
\newtheorem{proposition}[theorem]{Proposition}
\newtheorem{lemma}[theorem]{Lemma}
\newtheorem{corollary}[theorem]{Corollary}
\theoremstyle{remark}
\newtheorem{remark}[theorem]{Remark}

\title[Stability of the positive mass theorem]
{Stability of the Riemannian positive mass theorem in all dimensions}
\author{Gaoming Wang}
\address{
Beijing Institute of Mathematical Sciences and Applications,
Beijing 101408, China
}
\email{wanggaoming@bimsa.cn}

\author{Yiyue Zhang}
\email{zhangyiyue@bimsa.cn}

\subjclass[2020]{Primary 53C21, 83C05; Secondary 49Q05, 35J60}
\keywords{positive mass theorem, asymptotically flat manifold,
minimal graph, conformal Laplacian, measured Gromov--Hausdorff convergence}

\begin{document}
\begin{abstract}
We prove stability of the Riemannian positive mass theorem in all dimensions, extending the Dong–Song stability theorem. For a sequence of complete asymptotically flat manifolds with nonnegative scalar curvature and ADM masses tending to zero, excising domains whose boundary areas tend to zero yields exterior regions converging to Euclidean space in the pointed measured Gromov–Hausdorff topology. The proof constructs global coordinates from minimal graphs and controls their Hessians using scalar solutions of the conformal Laplace equation on the associated graph metrics. 
\end{abstract}
\maketitle

\section{Introduction and main results}\label{sec:introduction}

The positive mass theorem asserts that a complete asymptotically flat
manifold with nonnegative scalar curvature has nonnegative ADM mass,
and that zero mass characterizes Euclidean space. Schoen and Yau gave the first proof of the theorem, and Witten later
proved it for spin manifolds in every dimension
\cite{SchoenYau1979,Witten1981,ParkerTaubes1982}. 
The recent dimension descent argument of Brendle and Wang
\cite{BrendleWang2026}, and its extension by Khuri, Wang, and Wang
\cite{KhuriWangWang2026}, give the theorem in every dimension.
We prove the corresponding stability statement.

The stability question asks whether small mass forces a suitable exterior
region to be close to Euclidean space. Deep wells prevent convergence
of the whole manifold in the Gromov--Hausdorff topology. Huisken and
Ilmanen therefore proposed removing a region whose boundary area is
small \cite[Section~9]{HuiskenIlmanen2001}. The intrinsic flat distance
of Sormani and Wenger provides an alternative framework for studying
stability in the presence of deep wells
\cite{SormaniWenger2011}. Stability results in this topology include
rotationally symmetric, graphical, and geometrostatic manifolds
\cite{LeeSormani2014,HuangLee2015,HuangLeeSormani2017,
HuangLeePerales2022,SormaniStavrovAllen2019}. See also
\cite{Sormani2023}.

In dimension three, the harmonic function mass inequality of Bray,
Kazaras, Khuri, and Stern \cite{BrayKazarasKhuriStern2022} led to a
coordinate approach to stability. Kazaras, Khuri, and Lee proved
pointed Gromov--Hausdorff stability under additional asymptotic,
curvature, and topological assumptions \cite{KazarasKhuriLee2024}.
Dong and Song then proved the full excision statement, with pointed
measured Gromov--Hausdorff convergence and arbitrary base points
\cite{DongSong2025}. Related results include stability in the K\"ahler
setting \cite{Klemmensen2025} and higher dimensional spin stability
under a Ricci lower bound \cite{BrydenXie2026}. Related methods have also
been applied to stability results for Llarull's theorem
\cite{AllenBrydenKazaras2025,HirschZhang2024} and spacetime Penrose
inequalities \cite{AllenBrydenKazarasKhuri2025}.

Our proof combines global minimal graph coordinates with scalar
solutions of the conformal Laplace equation. A conformal change of
each graph metric produces nonnegative scalar curvature containing
the squared Hessian of its coordinate function. The mass of this
metric is an explicit dimensional multiple of the original mass.
Applying the qualitative positive mass inequality to a second
conformal change bounds an energy involving this scalar curvature.
The resulting estimates control the coordinate defect on small
sublevel sets. An excision argument then gives global coordinates
with small metric distortion and small boundary area.

An \emph{exterior region} is a connected, closed submanifold of dimension
\(n\), with smooth compact boundary, containing the distinguished end
outside a compact set. Empty boundary is allowed. A \emph{Euclidean
exterior region} is defined in the same way and contains
\(\mathbb R^n\setminus B_R\) for some \(R\).
We write \(\widehat d_g\) for the intrinsic length metric. Pointed
measured Gromov--Hausdorff convergence uses the restricted Riemannian
measures, without normalization. The asymptotic decay and mass
normalization are specified in Section~\ref{sec:preliminaries}.

\begin{theorem}\label{thm:main}
Fix \(n\geq3\). Let \((M_i^n,g_i)\) be smooth, connected, complete,
oriented manifolds without boundary and with finitely many ends, all
asymptotically flat. Suppose that
\[
 R_{g_i}\geq0,\qquad R_{g_i}\in L^1(M_i).
\]
Choose a distinguished end of each \(M_i\), and let \(m_i\) be its
ADM mass. If \(m_i\to0\), there are smooth connected exterior regions
\(\widehat E_i\subset M_i\) containing the distinguished ends such that
\[
 \mathcal H_{g_i}^{n-1}(\partial\widehat E_i)\longrightarrow0.
\]
For every choice of base points \(x_i\in\widehat E_i\),
\[
 (\widehat E_i,\widehat d_{g_i},x_i,dV_{g_i})
 \xrightarrow{\mathrm{pmGH}}
 (\mathbb R^n,d_{\mathrm{Eucl}},0,dx).
\]
\end{theorem}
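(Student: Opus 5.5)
We follow the strategy outlined in the introduction, in four steps: global coordinates, an integral Hessian bound, excision, and convergence.

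\textbf{Step 1: coordinates.} For each \(i\) and each direction \(k=1,\dots,n\), we construct a coordinate function \(u_i^k\) on \(M_i\) that is asymptotic to the Euclidean coordinate \(x^k\) on the distinguished end. We obtain \(u_i^k\) by solving a Plateau-type problem for minimal graphs in \(M_i\times\mathbb R\) with boundary data \(x^k\) on large coordinate spheres, and pass to the limit using interior gradient estimates for minimal graphs. The graph metric \(\bar g_i^k=g_i+du_i^k\otimes du_i^k\) is then asymptotically flat. Its scalar curvature is computed from the Gauss equation, and by minimality this gives
\[
R_{\bar g}=R_g+|A|^2-|\nabla u|^{-2}\text{-terms}\ \ge\ |\bar\nabla^2 u|^2_{\text{weighted}} .
\]
We multiply by a solution \(\phi\to1\) of the conformal Laplace equation \(L_{\bar g}\phi=0\), or a suitable power of \(|\nabla u|\) or of \(\phi\). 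The resulting metric \(\tilde g=\phi^{4/(n-2)}\bar g\) has scalar curvature \(\gtrsim \phi^{\cdot}\,|\nabla^2 u|^2/(1+|\nabla u|^2)^{\cdot}\ge0\). Its mass equals \(c_n m_i\) plus a boundary term, which vanishes because \(u-x^k=o(r^{(2-n)/2})\) suitably.

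\textbf{Step 2: Hessian bound.} We apply the positive mass theorem of \cite{BrendleWang2026,KhuriWangWang2026} to a second conformal change \(\tilde g\mapsto w^{4/(n-2)}\tilde g\) that kills part of the scalar curvature, with \(w\) solving \(-c_n\Delta w+\tfrac12 R_{\tilde g}w=0\). This bounds
\[
\int_{M_i}\frac{|\nabla^2u_i^k|^2}{|\nabla u_i^k|}\,\phi^{\cdot}\,dV\ \lesssim\ m_i\to0.
\]
The loss from \(\phi\) must be controlled: we need \(\phi\) bounded below away from a set of small boundary area. We do this via a maximum principle together with a coarea argument on the sublevel sets \(\{\phi<t\}\). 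Their boundary area is bounded by \(\int|\nabla\phi|\), which in turn is controlled by the energy identity \(\int|\nabla\phi|^2+c R\phi^2=\) a mass-type quantity \(=O(m_i)\).

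\textbf{Step 3: excision.} Define the bad set \(\Omega_i\) as the union of sublevel sets \(\{\phi_i^k<1-\delta_i\}\), \(\{|\nabla u_i^k|<\delta_i\}\) and \(\{|\langle\nabla u^k,\nabla u^l\rangle-\delta^{kl}|>\delta_i\}\), at regular values chosen by coarea so that the boundary area is \(O(\sqrt{m_i}/\delta_i)\to0\) for a slowly decaying \(\delta_i\). Smoothing these sets gives \(\widehat E_i\), the component containing the end. On \(\widehat E_i\), a Poincaré-type argument on level sets then shows that \(\nabla u_i^k\) is \(L^2\)-close to orthonormal away from sets of small measure, using the Hessian bound integrated along the good region.

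\textbf{Step 4: convergence.} The map \(U_i=(u_i^1,\dots,u_i^n)\) on \(\widehat E_i\) is almost a local isometry in an integral sense. Following Dong--Song \cite{DongSong2025}, we upgrade this to pmGH convergence in two parts. For distances we use a segment-inequality style argument, which needs a volume/measure comparison; we get it from the coordinates, since \(U_i\) has Jacobian close to \(1\) in measure. The measure convergence then follows. The arbitrary base points are handled because every point of \(\widehat E_i\) has \(|\nabla u|\) controlled and lies at bounded distance from the image structure, after translation.

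The main obstacle is Step 2 combined with Step 3: converting the energy bound into pointwise-in-measure control while keeping the excised boundary area small. Deep wells are exactly where \(\phi\) degenerates, and the coarea choice of levels must work uniformly in all \(n\) directions. I would first try choosing the excision as the sublevel set of \(\min_k\phi_i^k\) and bounding its perimeter through the \(L^2\) gradient energy.
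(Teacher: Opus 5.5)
Your outline has the paper's architecture: minimal graph coordinates, a conformal change whose scalar curvature contains the Hessian, the positive mass theorem applied to a second conformal change, excision, and Dong--Song. But the step that carries the proof is missing, namely producing an exterior region of small boundary area on which the coordinate map is a global diffeomorphism with uniformly small distortion. Step~4 as proposed would also fail.

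\emph{The area bound in Step~3.} The coarea claim $O(\sqrt{m_i}/\delta_i)$ is unjustified. Coarea and Cauchy--Schwarz only give $\int_a^b\mathcal H^{n-1}(\{f=t\})\,dt\le\bigl(\int_{\{a<f<b\}}|df|^2\bigr)^{1/2}\operatorname{Vol}(\{a<f<b\})^{1/2}$, and nothing in your argument bounds the volume of the band. Your Hessian bound is also uniform only where $\eta$ and the second potential are controlled, so the three families of sublevel sets cannot be cut independently.

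The paper uses a single combined defect $Q=|G-I|^2+\sum_a(1-w_a^2)$, for which $\int_{\{Q<6\epsilon\}}|dQ|_g^2\,dV_g\le C\epsilon m$ (Proposition~\ref{prop:good-band}). It then controls the band volume isoperimetrically. On $\{Q<6\epsilon\}$ the map $\Phi$ is a positively oriented local quasi-isometry, so slabs push forward to nonnegative integer-valued $BV$ functions on $\mathbb R^n$. The Euclidean Sobolev inequality bounds their volume by the areas of the bounding levels, and an ODE argument for the band volume yields a level of area $\le C(m/\epsilon)^{(n-1)/(n-2)}$ (Lemma~\ref{lem:small-level}).

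\emph{Injectivity.} You never address it. Small distortion only makes $U_i$ a local diffeomorphism, which may cover regions of $\mathbb R^n$ several times. The paper uses a degree computation at infinity to keep the unique sheet over the unbounded component of $\mathbb R^n\setminus\Phi(\partial\mathcal C)$ (Lemma~\ref{lem:one-sheet-extraction}).

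\emph{Convergence in Step~4.} A segment inequality needs a Ricci lower bound, which you do not have, and closeness to an isometry in measure does not control distances. Also, after cutting along a level of the Gram defect you would already have pointwise control, so the $L^2$ statement at the end of Step~3 is beside the point. The paper, following Dong--Song, instead obtains $\sup_{\Omega_\epsilon}|(\Phi_\epsilon^{-1})^*g-\delta|_\delta\le C\sqrt\epsilon$ on a Euclidean exterior region with small boundary area. It then applies Dong's $n$-dimensional Euclidean excision theorem, which also handles arbitrary base points, and bilipschitz comparison transfers the result back.

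\emph{Errors in Steps~1--2.}
\begin{itemize}
\item $R_{\bar g}$ is not bounded below by a Hessian term, because the Schoen--Yau identity \eqref{eq:3.3} contains $2\bar\Delta\log\eta$, which has no sign. The pointwise factor $\eta^{2/(n-1)}$, with $\eta=(1+|du|_g^2)^{-1/2}$ rather than a power of $|\nabla u|$, cancels it and gives \eqref{eq:3.5}. A solution of $L_{\bar g}\phi=0$ would give $R_{\widetilde g}=0$ and lose the Hessian entirely.
\item The mass boundary term does not vanish. $u_p-p\cdot x$ contains a homogeneous term $w_0$ of degree $3-n$, nonzero when $m\neq0$ and not even decaying when $n=3$, whose gradient enters $\eta$ at mass order. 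The explicit computation in Proposition~\ref{prop:tilted-mass} is what gives $m(\widetilde g)=(1+|p|^2)^{1/[2(n-1)]}m$.
\item The barrier construction of Lemma~\ref{lem:graph-barriers} uses a small slope, not slope one.
\item Your half-scalar-curvature potential is a workable variant of the paper's trick only if it tends to $0$ on the auxiliary ends, with the positive mass theorem applied to $t+(1-t)w$. If it tends to $1$ there, the energy identity picks up auxiliary fluxes bounded only by the masses of those ends, which need not be small.
\end{itemize}
Finally, the general AF case requires the density reduction to harmonic asymptotics, keeping the auxiliary ends AF.
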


The following theorem relates the boundary area of the exterior region
to its metric distortion.

\begin{theorem}\label{thm:quantitative}
Let \((M^n,g)\), \(n\geq3\), satisfy the geometric hypotheses of
Theorem~\ref{thm:main}, with \(R_g\geq0\) and \(R_g\in L^1(M)\).
Suppose that the distinguished end has harmonic asymptotics, and put
\(m:=m(g)\).  If \(m>0\), there are constants
\(\epsilon_0(n)>0\) and \(C(n)<\infty\) such that, for every
\(0<\epsilon<\epsilon_0(n)\), there are exterior regions
\(E_\epsilon\subset M\) and \(\Omega_\epsilon\subset\mathbb R^n\)
and an orientation-preserving diffeomorphism
\[
 \Phi_\epsilon:E_\epsilon\longrightarrow\Omega_\epsilon
\]
satisfying
\begin{equation}
 \mathcal H_g^{n-1}(\partial E_\epsilon)
 \leq C(n)\left(\frac m\epsilon\right)^{\frac{n-1}{n-2}},
 \qquad
 \sup_{\Omega_\epsilon}
 |(\Phi_\epsilon^{-1})^*g-\delta|_\delta
 \leq C(n)\sqrt\epsilon.
 \label{eq:quantitative-conclusions}
\end{equation}
At the distinguished end,
\(\Phi_\epsilon(x)=x+o(r)\) and
\(d\Phi_\epsilon=\operatorname{Id}+o(1)\).
\end{theorem}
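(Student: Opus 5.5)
We propose to follow the strategy sketched in the introduction: build global coordinates from minimal graphs, and control their Hessians by a conformal positive-mass argument.

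\emph{Step 1: minimal graph coordinates.} Using the harmonic asymptotics of the distinguished end, we fix a coordinate direction \(e_j\), \(j=1,\dots,n\). In \(M\times\mathbb R\) we solve for a function \(u_j\) whose graph is minimal and which is asymptotic to \(x_j\) at the distinguished end. Concretely, we take a limit of Dirichlet problems on exhausting domains, with the boundary data \(x_j\) on large coordinate spheres. Interior gradient estimates for minimal graphs, together with the asymptotics, give \(u_j=x_j+o(r)\) and \(du_j=dx_j+o(1)\) at infinity.

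The graph metric \(\bar g_j=g+du_j^2\) is then a metric on \(M\) whose scalar curvature, by the Gauss equation for a minimal hypersurface in a product, satisfies
\[
R_{\bar g_j}=R_g-\cdots+|A_j|^2\ge |A_j|^2 .
\]
Here \(|A_j|^2\) is comparable to \(|\nabla^2u_j|^2/(1+|\nabla u_j|^2)^{\ldots}\), after the ambient-curvature terms are handled. Those terms vanish in the Riemannian product up to a Ricci term, so we will need the precise identity. If the Ricci term spoils nonnegativity, we use the conformal Laplacian route instead: take a positive solution \(w_j\) of \(L_{\bar g_j}w_j=0\) with \(w_j\to1\). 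Because \(R_g\ge0\) and \(R_g\in L^1\), this is solvable via the Schoen--Yau coercivity argument. We then set \(\tilde g_j=w_j^{4/(n-2)}\bar g_j\), whose scalar curvature is a nonnegative quantity containing \(|\nabla^2u_j|^2\).

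We then compute the ADM mass of \(\tilde g_j\) from the expansions of \(u_j\) and \(w_j\). It should equal \(c(n)\,m\) for an explicit constant \(c(n)\).

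\emph{Step 2: energy estimate.} We apply the positive mass theorem (Brendle--Wang, Khuri--Wang--Wang) to a second conformal change \(\tilde g_j\mapsto v^{4/(n-2)}\tilde g_j\), where \(v\) solves \(-c_n\Delta v+\tfrac12R_{\tilde g_j}v=0\) and \(v\to1\). The resulting mass is
\[
c(n)m-c'\!\int R_{\tilde g_j}v\ \ge0 .
\]
This yields an integral bound
\[
\int_M \frac{|\nabla^2u_j|^2}{\cdots}\,\phi \;\le\; C(n)\,m ,
\]
with a positive weight \(\phi\) built from \(w_j\) and \(v\), both of which are bounded below away from small sublevel sets. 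This is the key quantitative input.

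\emph{Step 3: excision and the coordinate map.} Let \(\Phi=(u_1,\dots,u_n)\). The defect \(|\Phi^*\delta-g|\) is controlled by \(|du_i\cdot du_j-\delta_{ij}|\). We define a defect function \(f\ge0\), built from the weights \(w_j,v\) and from \(|\nabla^2u_j|\), and set \(E_\epsilon\) to be the component of the distinguished end in \(\{f\le\epsilon\}\), perturbing \(\epsilon\) to a regular value. The boundary area is bounded by coarea combined with the isoperimetric and Sobolev inequality for the superharmonic-type weights. Since \(1-w\) has Sobolev energy \(\lesssim m\), the set \(\{1-w\ge\epsilon\}\) has capacity \(\lesssim m/\epsilon\). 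The capacity–area relation then gives boundary area \(\lesssim (m/\epsilon)^{(n-1)/(n-2)}\), which is the claimed exponent.

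On \(E_\epsilon\) we integrate the Hessian bound along curves from infinity, where \(d\Phi=\mathrm{Id}\), to get \(|du_i\cdot du_j-\delta_{ij}|\le C\sqrt\epsilon\). This requires a pointwise, not merely \(L^2\), bound. We expect to obtain it by building the Hessian energy into \(f\), for example via a maximal-function or Bochner-type subsolution argument. Once \(d\Phi\) is nondegenerate, \(\Phi\) is a local diffeomorphism that is proper and a degree-one diffeomorphism near infinity, so it is injective onto its image \(\Omega_\epsilon\). This gives the metric estimate together with the asymptotic normalization.

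\emph{Main obstacle.} We expect the hardest step to be converting the integral Hessian energy of Step 2 into a pointwise \(C\sqrt\epsilon\) control of \(d\Phi\) on a region whose complement has small boundary area. Our first attempt will choose the excision function as a superharmonic quantity (a combination of \(w_j\) and \(v\)) that dominates the Hessian energy through the conformal equation. Its sublevel sets then directly encode both the area bound, via capacity, and the gradient bound, via the maximum principle.
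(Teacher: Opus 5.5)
Your proposal has a genuine gap, and it is exactly the step you flag as the main obstacle in Step 3. The real difficulty is getting a pointwise metric bound from the Hessian energy, on a region whose boundary area you also control.

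\textbf{Why your Step 3 does not work.} A bound on $\int|\nabla^2u_j|^2$ gives no control of $d\Phi$ along individual curves, since $W^{1,2}\not\hookrightarrow L^\infty$ for $n\geq3$. Bochner or maximal-function arguments would need curvature bounds that are not assumed. Sublevel sets of combinations of $w_j$ and $v$ do not control $|\langle du_i,du_j\rangle_g-\delta_{ij}|$. Putting $|\nabla^2u_j|$ into your excision function $f$ destroys the Dirichlet-energy bound that coarea needs.

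\textbf{The missing idea.} Build the Gram defect itself into the cutting function. With a small fixed slope $\lambda$, set $G_{ab}:=\lambda^{-2}\langle du_a,du_b\rangle_g$, $q_a:=1-w_a^2$, and $Q:=|G-I|^2+\sum_aq_a$. Then:
\begin{itemize}
\item On $\{Q<6\epsilon\}$ the bound $|G-I|<\sqrt{6\epsilon}$, and hence the $C\sqrt\epsilon$ metric estimate, holds by definition.
\item On the same set the tilted metrics are uniformly comparable to $g$ and $w_a^2>1-6\epsilon$. So the localized bound $\int_{\{q_a<\nu\}}R_{\widetilde g_a}\,dV_{\widetilde g_a}\le C(n)m/(1-\nu)$ gives $\int_{\{Q<6\epsilon\}}|\nabla^2u_a|_g^2\le C(n)m$.
\item The inequality $|d(|G-I|^2)|^2\le4|G-I|^2|dG|^2$ supplies the extra factor $\epsilon$. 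Together with $\int_{\{q_a<\nu\}}|dq_a|^2\le C\nu m$, this yields $\int_{\{Q<6\epsilon\}}|dQ|_g^2\le C(n)\epsilon m$.
\end{itemize}

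\textbf{The area bound.} Coarea then selects a level of area $\le C(m/\epsilon)^{(n-1)/(n-2)}$. The isoperimetric inequality this uses is not available on $M$ a priori. It comes from pushing slabs $\{s<Q<t\}$ forward under the almost-isometric, orientation-preserving local diffeomorphism $\Phi$, with nonnegative integer multiplicity, and applying the Euclidean BV Sobolev inequality. Your capacity bound for $\{1-w\geq\epsilon\}$ alone supplies neither this isoperimetry nor control of $d\Phi$.

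\textbf{Injectivity.} Your injectivity claim is false as stated. A proper local diffeomorphism of a region with boundary can be several-sheeted over bounded components of $\mathbb R^n\setminus\Phi(\partial\mathcal C)$. The degree argument only gives one sheet over the unbounded component. You must excise a smoothed neighborhood of $\Phi(\partial\mathcal C)$ whose area is comparable to $\mathcal H^{n-1}(\partial\mathcal C)$.

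\textbf{Step 1 also needs repair.} For a minimal graph in $M\times\mathbb R$ the Gauss equation reads $R_{\bar g}=R_g-2\operatorname{Ric}(\nu,\nu)-|h|_{\bar g}^2$, so the second fundamental form enters with the wrong sign. The fix is as follows.
\begin{itemize}
\item Use the Jacobi field $\eta=(1+|du|_g^2)^{-1/2}$ from vertical translations to eliminate the Ricci term. This gives $R_{\bar g}=R_g+|h|^2+2|d\log\eta|^2+2\bar\Delta\log\eta$.
\item Then take the explicit conformal change $\widetilde g=\eta^{2/(n-1)}\bar g$. It has $R_{\widetilde g}=\eta^{-2/(n-1)}\bigl(R_g+|h|_{\bar g}^2+\tfrac{n}{n-1}|d\log\eta|_{\bar g}^2\bigr)\geq0$ and mass $(1+|p|^2)^{1/[2(n-1)]}m$.
\end{itemize}
Your alternative, solving $L_{\bar g_j}w_j=0$, produces a scalar-flat metric. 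The Hessian then does not appear in $R_{\tilde g_j}$, and the second conformal change of Step 2 is trivial.

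\textbf{Auxiliary ends and slope.} The potentials $w_a$ and the graphs $u_a$ must tend to $0$ on the auxiliary ends. This is what forces $Q\to2n$ there, so that the good region avoids them. Finally, the paper's barrier construction for the global graphs uses a small slope $|p|\le\lambda_0(n)$, not slope one; with your slope-one boundary data you would need a separate existence argument.
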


For a sequence with harmonic asymptotics and positive masses, choose
\(\epsilon_i:=\sqrt{m_i}\) for all sufficiently large \(i\).
The boundary and metric errors in \eqref{eq:quantitative-conclusions}
are then
\begin{equation}
 O\!\left(m_i^{\frac{n-1}{2(n-2)}}\right)
 \quad\text{and}\quad O(m_i^{1/4}),
 \label{eq:1.3}
\end{equation}
respectively. More generally, both errors tend to zero whenever
\(\epsilon_i\to0\) and \(m_i/\epsilon_i\to0\).
The proof uses only the inequality part of the positive
mass theorem. 

We briefly outline the key ideas of the proof. We construct global
minimal graph functions $u_1,\ldots,u_n$ whose normalized components
define a map $\Phi:=\lambda^{-1}(u_1,\ldots,u_n)$ asymptotic to the
identity on the distinguished end. A conformal change of each graph
metric produces nonnegative scalar curvature containing a positive
quadratic term in $\nabla_g^2u_a$. Further conformal deformations
and the positive mass theorem yield weighted $L^2$ bounds for these
Hessians in terms of the original ADM mass. Adapting the level
selection argument of Dong and Song, we obtain an exterior region
with small boundary area on which $\Phi$ is a diffeomorphism with
small metric distortion. Finally, we apply Dong's higher dimensional
extension of their Euclidean excision theorem to obtain pointed
measured Gromov--Hausdorff convergence to $\mathbb R^n$ with respect
to the intrinsic metrics.

\medskip
\noindent\textbf{Organization of the paper.}
Section~2 sets notation for weighted spaces and asymptotic geometry
and recalls the positive mass theorem. Section~3 constructs the
global minimal graph functions and establishes their asymptotics
(Theorem~3.1). Sections~4 and~5 derive the scalar curvature identity
and the ADM mass formula for the conformal graph metrics, respectively.
Section~6 constructs the scalar conformal potential and proves the
mass--energy estimate (Proposition~6.3) and its localized versions.
Section~7 establishes the energy bound for the coordinate defect and
treats the zero mass case. Section~8 selects a regular level set of
small area and extracts the exterior coordinates, completing the
proof of Theorem~1.2. Finally, Section~9 proves convergence in the
intrinsic metrics and removes the harmonic asymptotics assumption
to complete the proof of Theorem~1.1.

\medskip
\noindent\textbf{AI usage.} The authors made substantial use of ChatGPT in developing and editing this manuscript, including suggestions for the main
arguments. The authors carefully checked all
arguments and rewrote the proofs.

\medskip
\noindent\textbf{Acknowledgements.} We thank Sven Hirsch, Demetre Kazaras and Marcus Khuri for
many helpful discussions and suggestions. YZ is partially supported by NSFC
Grant No.\ 12501070. Both authors are partially supported by the startup fund from BIMSA.

\section{Preliminaries}\label{sec:preliminaries}

\subsection{Weighted spaces and asymptotic geometry}
\label{subsec:asymptotic-geometry}

The following weighted norms describe the decay of the metric, graph
functions, and conformal potentials, including their derivatives on
rescaled annuli.

Fix \(0<\alpha<1\) and an inner radius \(R_0\) for the exterior
coordinate chart.  For any weight \(s\in\mathbb R\), define the weighted
H\"older norm of a function or tensor \(z\) by
\begin{equation}
 \|z\|_{C^{k,\alpha}_{s}}
 :=\sup_{R\geq R_0}R^{-s}
 \bigl\|z(R\,\cdot)\bigr\|_{C^{k,\alpha}(\{1\leq|y|\leq2\})}.
 \label{eq:weighted-holder-norm}
\end{equation}
For tensors, the norm is applied componentwise to their coordinate
components.  Thus \(z=O_{k,\alpha}(r^s)\) means
\(z\in C^{k,\alpha}_s\), while \(z=o_{k,\alpha}(r^s)\) means that the
corresponding rescaled annular norm tends to zero.

An end \(E\) of an \(n\)-manifold is \emph{asymptotically flat} (AF) if it
is diffeomorphic to \(\mathbb R^n\setminus B_{R_0}\) and, in the
corresponding coordinates,
\begin{equation}
 g_{ij}-\delta_{ij}\in C^{2,\alpha}_{-\tau},
 \qquad \tau>\frac{n-2}{2}.
 \label{eq:af-definition}
\end{equation}
We additionally assume that \(R_g\in L^1(M)\).  Denote
\(\omega_{n-1}:=\mathcal H^{n-1}(S^{n-1})\), and write
\(S_r:=\{|x|=r\}\), \(\nu:=x/r\), and \(dA_\delta\) for, respectively,
the Euclidean coordinate sphere, its outward Euclidean unit normal, and
its Euclidean area measure.  The ADM mass of this end is normalized by
\begin{equation}
 m_E(g):=\frac{1}{2(n-1)\omega_{n-1}}
 \lim_{r\to\infty}\int_{S_r}
 (\partial_jg_{ij}-\partial_ig_{jj})\nu^i\,dA_\delta.
 \label{eq:adm-mass}
\end{equation}
For the distinguished end we abbreviate \(m_E(g)\) to \(m(g)\).

An AF end is \emph{harmonically flat} if, outside a compact set,
\[
 g=\phi^{4/(n-2)}\delta,
 \qquad \Delta_\delta\phi=0,
 \qquad \phi>0,
 \qquad \phi\longrightarrow1.
\]
More generally, it has \emph{harmonic asymptotics} if, after changing
the AF coordinates if necessary,
\begin{equation}
 g=\phi^{4/(n-2)}\delta+O_{2,\alpha}(r^{1-n}),
 \qquad
 \phi=1+\frac{m_E(g)}2 r^{2-n}+O_{2,\alpha}(r^{1-n}),
 \label{eq:harmonic-asymptotics}
\end{equation}
where \(\phi\) is positive and Euclidean harmonic near infinity. The
coefficient is written using the normalization \eqref{eq:adm-mass}.

Unless stated otherwise, manifolds are smooth, connected, complete,
oriented, without boundary, and have finitely many AF ends, one of which is
distinguished.  All AF coordinate charts are chosen to preserve the given
orientation.  For the exterior regions defined before
Theorem~\ref{thm:main}, no condition is imposed on which auxiliary ends
they contain, and PDE statements concern their interiors.

We use \(\Delta_g:=\operatorname{div}_g\nabla\). For matrices, \(|\cdot|\)
is the Euclidean Hilbert--Schmidt norm and \(I\) is the identity matrix.
Weighted norms on \(M\) combine the end norms above with the usual
norms on a fixed compact core.

\subsection{The Riemannian positive mass theorem}
The Riemannian positive mass theorem in all dimensions was established in
\cite{BrendleWang2026}. A more general version allowing singularities
was proved in \cite{KhuriWangWang2026}.

\begin{theorem}[\cite{BrendleWang2026,KhuriWangWang2026}]\label{thm:positive-mass}
Let \((N^n,\gamma)\), \(n\geq3\), be a smooth, connected, complete
manifold without boundary and with finitely many asymptotically flat
ends in the sense of \eqref{eq:af-definition}. If
\(R_\gamma\geq0\) and \(R_\gamma\in L^1(N)\), the ADM mass of
every end is nonnegative.
\end{theorem}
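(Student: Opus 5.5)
The plan is to reduce this statement to the versions proved in \cite{BrendleWang2026,KhuriWangWang2026}, rather than to reprove the positive mass inequality. Those papers prove nonnegativity of the mass for a chosen asymptotically flat end under hypotheses that may differ slightly from ours, for example in the decay rate, the regularity, or the form of the other ends. So I would fix an arbitrary end \(E\) of \(N\), argue by contradiction assuming \(m_E(\gamma)<0\), and deform \(\gamma\) into a metric satisfying the hypotheses of the cited theorems while keeping the mass negative.

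The first step is a density argument of Schoen--Yau type. I would solve
\[
 -\frac{4(n-1)}{n-2}\Delta_\gamma u+\chi R_\gamma u=0,\qquad u\to1 \text{ at every end},
\]
where \(\chi\) is a cutoff supported near infinity in all ends, equal to \(1\) for \(|x|\geq 2\sigma\) and vanishing for \(|x|\leq\sigma\). Since \(R_\gamma\geq0\), the operator is coercive in the weighted space \(C^{2,\alpha}_{-\tau}\) by the standard Fredholm theory on AF manifolds. The metric \(u^{4/(n-2)}\gamma\) has scalar curvature \((1-\chi)R_\gamma u^{(n+2)/(n-2)}\geq0\). Its mass at \(E\) differs from \(m_E(\gamma)\) by a term bounded by \(\int_{\{|x|\geq\sigma\}}R_\gamma\), up to a dimensional constant. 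This term tends to zero as \(\sigma\to\infty\) because \(R_\gamma\in L^1\). In addition, \(\chi R_\gamma\) is small in the relevant weighted norm, which is where \(\tau>\frac{n-2}{2}\) enters.

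Next I would cut the resulting metric off to \(\phi^{4/(n-2)}\delta\) near infinity, with \(\phi\) Euclidean harmonic, and conformally correct the small negative scalar curvature created in the gluing region. This produces a harmonically flat metric with \(R\geq0\) on every end, whose mass at \(E\) is still negative for large \(\sigma\). The smallness of the mass change is the delicate point: it requires the decay \(\tau>\frac{n-2}{2}\), which makes the flux integral in \eqref{eq:adm-mass} converge and makes the error from the quadratic terms vanish.

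The remaining step is to compare the resulting manifold with the exact hypotheses of the cited theorems. If those results allow finitely many AF ends, as \cite{KhuriWangWang2026} does, we are done. Otherwise, I would compactify each auxiliary end conformally by a Green's function type change, producing a point singularity, which is permitted in the singular version of \cite{KhuriWangWang2026}. Either way we obtain a contradiction with \(m_E<0\). I expect the main obstacle to be this matching of hypotheses, namely ensuring that the auxiliary ends and the low regularity \(C^{2,\alpha}\) fit the cited statement, rather than the analytic approximation, which is standard.
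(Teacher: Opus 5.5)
Your proposal agrees with the paper, which gives no argument for this statement and simply imports it from \cite{BrendleWang2026,KhuriWangWang2026} with exactly these hypotheses, so your density and compactification steps serve only as a safeguard in case the cited statements are narrower. As a safeguard it is sound in outline, since it is the usual Schoen--Yau density argument (compare the paper's use of Lee--Lesourd--Unger in Lemma~\ref{lem:finite-end-density}), up to two minor slips: first, the conformal metric has \(R=(1-\chi)R_\gamma u^{-4/(n-2)}\); second, to compactify auxiliary ends you should take a factor \(w\) with \(L_\gamma w=0\) and \(w\to1\) at \(E\), because then \(1-w\geq0\) is superharmonic, \(\mathcal F_E(w)\geq0\), and the mass at \(E\) can only decrease, which is what keeps it negative.
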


\section{Global affine minimal graphs}\label{sec:global-graphs}

We construct the functions that will form the coordinate map \(\Phi\).
Throughout this section, the distinguished end \(E_0\) has harmonic
asymptotics \eqref{eq:harmonic-asymptotics}, while the auxiliary ends need only
satisfy \eqref{eq:af-definition}. Put \(m:=m(g)\). On \(E_0\),
\begin{equation}\label{eq:graph-harmonic}
 g=(1+f_0)\delta+O_{2,\alpha}(r^{1-n}),\qquad
 f_0:=\frac{2m}{n-2}r^{2-n}.
\end{equation}
Coordinate derivatives are denoted by \(D\), and covariant derivatives
by \(\nabla_g\). For a smooth function \(v\), set
\begin{equation}\label{eq:graph-operator}
 W_v:=(1+|dv|_g^2)^{1/2},\qquad
 \mathcal M_g(v):=\operatorname{div}_g\frac{\nabla_g v}{W_v}.
\end{equation}
Thus \(\mathcal M_g(v)=0\) is the minimal graph equation in
\((M\times\mathbb R,g+dt^2)\). A supersolution satisfies
\(\mathcal M_g(v)\leq0\), and a subsolution satisfies the reverse
inequality. The equivalent nondivergence operator is
\begin{equation}\label{eq:graph-nondivergence}
 \mathcal P_g(v):=W_v\mathcal M_g(v)
 =\left(g^{ij}-\frac{v^iv^j}{1+|dv|_g^2}\right)(\nabla_g^2v)_{ij},
 \qquad v^i:=g^{ij}\partial_jv.
\end{equation}
For a slope \(p\in\mathbb R^n\), define
\begin{equation}\label{eq:graph-parameters}
 \eta_0:=(1+|p|^2)^{-1/2},\qquad
 A:=I+p\otimes p,\qquad
 \mathfrak c_p:=\frac{n-2+\eta_0^2|p|^2}{2}.
\end{equation}

\begin{theorem}\label{thm:affine-graph}
There is \(\lambda_0(n)>0\) with the following property.
For every manifold as above and every
\(0<|p|\leq\lambda_0(n)\), there is a smooth solution \(u_p\) of
\(\mathcal M_g(u_p)=0\) on \(M\) such that
\begin{equation}\label{eq:graph-main-asymptotics}
 u_p=p\cdot x+w_0+O_{3,\alpha}(r^{2-n+1/4})\quad\text{on }E_0,
\end{equation}
where \(w_0\) is homogeneous of degree \(3-n\) and solves
\begin{equation}\label{eq:graph-w0-equation}
 \operatorname{div}_\delta(A^{-1}Dw_0)=-\mathfrak c_p\,p\cdot Df_0.
\end{equation}
In dimension three, choose the odd solution of \eqref{eq:graph-w0-equation}.
On each auxiliary end there is \(b_E\in\mathbb R\) such that
\begin{equation}\label{eq:graph-aux-asymptotics}
 u_p=b_Er^{2-n}+O_{3,\alpha}(r^{2-n-1/8}).
\end{equation}
The solution is unique among smooth solutions for which
\(u_p-p\cdot x-w_0\to0\) on \(E_0\) and \(u_p\to0\) on every auxiliary end.
Its gradient is globally bounded, and its graph metric is complete.
\end{theorem}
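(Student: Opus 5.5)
We construct \(u_p\) by exhaustion and barriers: solve Dirichlet problems on a sequence of bounded domains with the prescribed asymptotic profile as boundary data, bound the solutions uniformly by explicit sub/supersolutions, and pass to a limit using a priori gradient estimates.

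\textbf{Asymptotic profile.} Write \(v_0:=p\cdot x+w_0\). Since \(g=(1+f_0)\delta+O(r^{1-n})\) and \(Dv_0=p+O(r^{2-n})\), we linearize \(\mathcal P_g(v)\) at the constant slope \(p\). The coefficient matrix \(g^{ij}-v^iv^j/(1+|dv|^2)\) becomes \(A^{-1}=I-\eta_0^2\,p\otimes p\) up to \(O(r^{2-n})\). The Christoffel terms of the conformally flat part \((1+f_0)\delta\) contribute a first-order term in \(Df_0\) contracted with \(p\). Collecting them gives the source \(-\mathfrak c_p\,p\cdot Df_0\), with
\[
\mathfrak c_p=\tfrac{n-2}{2}+\tfrac12\eta_0^2|p|^2 .
\]
Here the \(\tfrac{n-2}{2}\) comes from \(\operatorname{div}\) in \(g\), and the \(\tfrac12\eta_0^2|p|^2\) from the \(W_v\) factor.

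The right side of \eqref{eq:graph-w0-equation} is homogeneous of degree \(1-n\) and odd. The operator \(\operatorname{div}(A^{-1}D\cdot)\) is a constant-coefficient elliptic operator, linearly equivalent to the Laplacian. Hence a homogeneous solution of degree \(3-n\) exists by separation of variables after the linear change \(y=A^{-1/2}x\). The source is a degree-one spherical harmonic times \(r^{1-n}\), and degree \(3-n\) is not an indicial root except when \(n=3\). In that case degree \(0\) meets the constants and there is log-type freedom, so we choose the odd solution.

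With this choice we get \(\mathcal M_g(v_0)=O(r^{-n}\log r)\). The \(r^{1/4}\) loss is harmless because the error decays faster than \(r^{-n+1/4}\). Next we correct \(v_0\) by \(\pm K r^{2-n+1/4}\)-type terms (times a cutoff) to obtain genuine supersolutions \(\overline v\) and subsolutions \(\underline v\) on \(E_0\) outside a large compact set. This uses that the linearized operator acting on \(r^{-\beta}\), with \(0<\beta<n-2\), has a definite sign of size \(r^{-\beta-2}\). Similarly, \(\pm\varepsilon r^{2-n-1/8}\)-corrected barriers on the auxiliary ends give the decay \eqref{eq:graph-aux-asymptotics}.

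\textbf{Existence.} Exhaust \(M\) by domains \(\Omega_k\) whose boundaries are coordinate spheres \(S_{r_k}\) in every end. Solve \(\mathcal M_g(u_k)=0\) in \(\Omega_k\) with \(u_k=v_0\) on the \(E_0\) boundary and \(u_k=0\) on the auxiliary boundaries. Solvability follows from the Leray–Schauder method once we have \(C^0\) and boundary gradient bounds. The barriers give the boundary gradient bounds, since for small \(|p|\) the data are nearly affine and the boundary spheres are nearly mean convex. Interior gradient bounds follow from the Korevaar/Trudinger estimate for minimal graphs, given a \(C^0\) bound.

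The main obstacle is the global \(C^0\) bound across the compact core, where no explicit barrier is available. Our plan is as follows. The difference of a solution and a barrier satisfies a linear elliptic equation, obtained by writing \(\mathcal P_g(u)-\mathcal P_g(\overline v)\) as a linear operator applied to the difference, so the comparison principle applies on \(\Omega_k\). To build a global supersolution, take a positive \(g\)-harmonic-type function \(\psi\) that tends to \(1\) on \(E_0\) and to \(0\) on the other ends. Patch \(\overline v\) with \(C|p|\psi+C'\) through the core, using that \(|p|\le\lambda_0(n)\) keeps all gradients small so the operator stays uniformly elliptic. Smallness of \(\lambda_0\) is exactly what is needed to absorb the nonlinear error terms.

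Uniform estimates then pass to the limit \(u_k\to u_p\) in \(C^{3}_{\mathrm{loc}}\), squeezed between the barriers. Weighted Schauder estimates on rescaled annuli upgrade the barrier bounds to the \(O_{3,\alpha}\) expansions. On auxiliary ends, the leading \(b_Er^{2-n}\) term comes from the decay of solutions of an asymptotically Laplacian equation.

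\textbf{Uniqueness and remaining claims.} Uniqueness follows from the comparison principle applied to the linear equation satisfied by the difference of two solutions, which tends to \(0\) at every end. The global gradient bound comes from the asymptotics together with the interior estimates. Completeness of the graph metric \(g+du_p^2\) follows because it is bi-Lipschitz to the complete metric \(g\) when \(|du_p|\) is bounded.
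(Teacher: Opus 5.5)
There is a genuine gap in the step you call the main obstacle: the global barrier through the compact core. Your outline of the profile $v_0$, the tail barriers, the exhaustion, the rescaled Schauder estimates and uniqueness is otherwise in line with the paper.

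\textbf{The core barrier.} Your patch $C|p|\psi+C'$ with $\Delta_g\psi=0$ is not a supersolution. For $a:=C|p|$,
\[
 \mathcal P_g(a\psi+C')=-\frac{a^3\,\nabla_g^2\psi(\nabla\psi,\nabla\psi)}{1+a^2|d\psi|_g^2},
\]
which has no sign, and uniform ellipticity does not supply one. Forcing a sign, say with a strictly superharmonic $\psi$, would need $|p|$ small relative to the core geometry of $\psi$. Then $\lambda_0$ would depend on $M$, not only on $n$.

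The gluing itself is also not arranged. On $S_r$ your $\overline v$ oscillates by about $\pm|p|r$. A $\min$/$\max$ pasting therefore needs a correction that dominates this oscillation at the inner radius and is dominated by it at the outer radius. A correction $\pm Kr^{2-n+1/4}$ with unspecified $K$ does not do this, and a cutoff destroys the differential inequality where it varies.

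The missing idea is that an explicit core barrier exists: constants solve $\mathcal M_g=0$ exactly. The paper's construction runs as follows.
\begin{itemize}
\item Take $h=b_0|p|R^{s+1}r^{-s}$ with $s=n-2-\tfrac14$, so that $h(R)=b_0|p|R$ is tied to the gluing radius.
\item Choose $b_0=b_0(n)$ so large that $|v_0|\le D_n|p|r$ forces $v_0+h>H$ on $S_R$ and $v_0+h<H$ on $S_{2R}$, where $H:=\tfrac12(h(R)+h(2R))$.
\item Set $U^+:=H$ on the core, $U^+:=\min\{H,v_0+h\}$ for $R<r<2R$, and $U^+:=v_0+h$ beyond, with $v_0=0$ on auxiliary ends; define $U^-$ symmetrically.
\item A minimum of supersolutions is a weak supersolution, so comparison holds on every $\Omega_T$.
\end{itemize}
Smallness of $|p|$ is then used only on the tails, against nonlinear terms of size $|p|^2h/r^2$. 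The manifold-dependent radius $R$ absorbs the AF errors and the residual $C_M|p|r^{-n}$. Because $h$ carries the factor $|p|$, that residual's ratio to $h/r^2$ is at most $C_MR^{1-n}/b_0$. This is what keeps $\lambda_0$ dimensional.

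\textbf{The auxiliary-end barriers.} Barriers $\pm\varepsilon r^{2-n-1/8}$ cannot give \eqref{eq:graph-aux-asymptotics}. For $\beta>n-2$ one has $\Delta_\delta r^{-\beta}=\beta(\beta-n+2)r^{-\beta-2}>0$, so these functions have the wrong sign. Moreover, $u$ is itself of order $r^{2-n}$, so it cannot be trapped between faster-decaying functions. Barriers only give $u=O_{3,\alpha}(r^{-s})$ with $s<n-2$.

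The expansion needs a separate linear step:
\begin{itemize}
\item Rewrite the equation as $\Delta_\delta u=\partial_i\bigl((\delta^{ij}-B^{ij})\partial_ju\bigr)$ with $B^{ij}=\sqrt{\det g}\,g^{ij}/\sqrt{1+|du|_g^2}$.
\item Use the barrier decay and asymptotic flatness to get $\Delta_\delta u=O_{1,\alpha}(r^{-n-1/4})$.
\item Represent a cutoff extension $\widetilde u$ of $u$ by the Newton potential, and read off $b_E$ from $\int\Delta_\delta\widetilde u$.
\item Finish with Schauder estimates on annuli.
\end{itemize}
Your closing remark about an asymptotically Laplacian equation points this way, but it should replace the barrier claim rather than supplement it.

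\textbf{Minor slips.} The normalizing change of variables is $y=A^{1/2}x$, not $y=A^{-1/2}x$. No logarithm appears: $\mathcal M_g(v_0)=O(|p|r^{-n})$, including for $n=3$, where the odd source has mean zero. In that case the only freedom is an additive constant, not a log term.
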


The upper bound for the slope depends only on the dimension, whereas the
starting radius and estimates on compact subsets may depend on the manifold.

\subsection{Dirichlet solvability and local estimates}

A smooth bounded domain is strictly mean convex if its boundary has
positive outward mean curvature. Equivalently, its inward distance
function \(d\) satisfies \(\Delta_gd\leq-h_0<0\), for some
\(h_0>0\), on a sufficiently thin closed interior collar.
Large coordinate spheres bounding a truncated AF manifold have this property:
their outward mean curvature is \((n-1)/r+o(r^{-1})\).

We recall the standard Dirichlet solvability result for minimal graphs on
bounded strictly mean convex domains. See Jenkins--Serrin
\cite{JenkinsSerrin1968} and Serrin \cite{Serrin1969} for the classical
Euclidean theory, and Spruck \cite[Sections~3--4]{Spruck2007} for the
Riemannian estimates and continuity argument.

\begin{lemma}\label{lem:graph-dirichlet}
Let \(\Omega\Subset M\) have smooth strictly mean convex boundary.
For every \(\varphi\in C^\infty(\partial\Omega)\), there is a unique smooth
solution of
\[
 \mathcal M_g(u)=0\quad\text{in }\Omega,\qquad
 u=\varphi\quad\text{on }\partial\Omega.
\]
\end{lemma}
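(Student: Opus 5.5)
Uniqueness comes first, then existence by the method of continuity in the style of Serrin and Spruck; the only real issue is a boundary gradient bound, which is where the strict mean convexity of \(\partial\Omega\) enters.

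\emph{Uniqueness.} Suppose \(u_1,u_2\) both solve the problem. Writing \(\mathcal M_g(u_1)-\mathcal M_g(u_2)=0\) as \(\operatorname{div}_g\bigl(a^{ij}\partial_j(u_1-u_2)\bigr)=0\), with
\[
a^{ij}=\int_0^1 \frac{\partial}{\partial\xi_j}\Bigl(\frac{\xi^i}{\sqrt{1+|\xi|^2}}\Bigr)\Big|_{\xi=\nabla(tu_1+(1-t)u_2)}\,dt ,
\]
gives a uniformly elliptic linear equation on \(\overline\Omega\), since both gradients are bounded there. The difference \(u_1-u_2\) vanishes on \(\partial\Omega\), so the maximum principle forces \(u_1=u_2\). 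The same comparison principle also gives \(\sup_\Omega|u|\le\sup_{\partial\Omega}|\varphi|\), because constants are solutions.

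\emph{Existence.} Consider the family \(\mathcal M_g(u_s)=0\) with \(u_s=s\varphi\) on \(\partial\Omega\), for \(s\in[0,1]\), and let \(S\) be the set of \(s\) for which a solution in \(C^{2,\alpha}(\overline\Omega)\) exists. Then \(0\in S\) via \(u\equiv0\). \(S\) is open because the linearization at a solution is a uniformly elliptic operator with no zeroth order term. Such an operator is invertible from \(C^{2,\alpha}\) functions with zero boundary values onto \(C^{0,\alpha}\), so the implicit function theorem applies. \(S\) is closed provided we have an a priori \(C^1\) bound independent of \(s\). Given that bound, the equation in nondivergence form \(\mathcal P_g\) is uniformly elliptic. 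Since \(\mathcal M_g(u)=0\) is of divergence form, De Giorgi--Nash applied to the derivatives of \(u\) (or Ladyzhenskaya--Ural'tseva) gives a \(C^{1,\beta}\) estimate. Schauder theory then upgrades this to \(C^{2,\alpha}\) and, by bootstrapping, to smoothness up to the boundary. Hence \(S=[0,1]\).

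\emph{The \(C^1\) estimate.} The \(C^0\) bound comes from comparison with constants, as above. The interior gradient bound reduces to the boundary, because \(|du|^2\) (or \(W_u\)) satisfies a maximum principle for the linearized operator up to curvature terms. One route is a Korevaar-type maximum principle argument; another is the observation that \(\nu_{n+1}=1/W_u\) is a Jacobi field on the graph in \(M\times\mathbb R\). In either route the lower bound on \(\operatorname{Ric}_g\) over the compact set \(\overline\Omega\) only contributes bounded terms, and this yields \(\sup_\Omega|du|\le C(1+\sup_{\partial\Omega}|du|)\). The boundary gradient bound is the main obstacle, and the place where strict mean convexity is used. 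Extend \(\varphi\) smoothly to the collar and take barriers \(w^\pm=\varphi\pm\psi(d)\), where \(d\) is the inward distance to \(\partial\Omega\) and \(\psi(d)=k\log(1+\mu d)\). Compute \(\mathcal P_g(w^+)\). The term \(\psi''(d)\) is damped by the factor \(1-\langle\nabla w,\nabla d\rangle^2/W^2\), which is small when \(\psi'\) is large. The term \(\psi'\Delta_gd\le-h_0\psi'\) then dominates, together with the bounded contributions of \(\nabla^2\varphi\). Choosing \(\mu\) large and the collar width small gives \(\mathcal P_g(w^+)\le0\) in the collar and \(w^+\ge\sup|u|\) on its inner boundary; this is the classical Serrin computation, and the Riemannian version is in Spruck. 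Comparison then yields \(|\partial_\nu u|\le C\) on \(\partial\Omega\), which closes the argument.
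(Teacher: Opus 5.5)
Your proposal is correct and is essentially the argument the paper relies on. The paper gives no proof of its own; it cites Jenkins--Serrin, Serrin, and Spruck (Sections~3--4) for this same continuity method, with a $C^0$ bound from constants, a reduction of the gradient bound to the boundary, and Serrin-type barriers $\varphi\pm\psi(d)$ coming from strict mean convexity (with $s\varphi$ replacing $\varphi$ along the path). One step is stated too loosely: a Ricci lower bound $\operatorname{Ric}_g\geq-K$ enters the Jacobi equation for $\eta=1/W_u$ as a zeroth-order term of the wrong sign, not a harmless bounded one, and it is absorbed by the height function, which is harmonic on the minimal graph with $|\nabla_\Sigma u|^2=1-\eta^2$; then $\log\eta-\tfrac K2u^2$ is superharmonic on the graph, giving $\sup_\Omega W_u\leq e^{K\sup|\varphi|^2/2}\sup_{\partial\Omega}W_u$.
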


For the exhaustion argument below, we also need local compactness of these
solutions. Spruck's interior gradient estimate
\cite[Theorem~1.1]{Spruck2007}, combined with standard elliptic estimates
\cite[Chapters~6 and~13--15]{GilbargTrudinger2001}, shows that uniform local
height bounds yield uniform derivative bounds on smaller compact subsets.

\subsection{An approximate affine solution}

\begin{lemma}\label{lem:graph-model}
For \(|p|\leq1\), there is a smooth homogeneous function \(w_0\) of degree \(3-n\)
on \(\mathbb R^n\setminus\{0\}\) satisfying \eqref{eq:graph-w0-equation}.
It can be chosen odd, and
\begin{equation}\label{eq:graph-w0-bounds}
 |D^jw_0|\leq C_{n,j}|m||p|r^{3-n-j}.
\end{equation}
The function \(v_0:=p\cdot x+w_0\) satisfies on the distinguished end
\begin{equation}\label{eq:graph-model-error}
 \mathcal M_g(v_0)=O_{1,\alpha,M}(|p|r^{-n}).
\end{equation}
After increasing the starting radius, uniformly for \(|p|\leq1\),
\begin{equation}\label{eq:graph-model-smallness}
 |v_0|\leq D_n|p|r,\qquad
 |dv_0|_g+r|\nabla_g^2v_0|_g\leq D_n|p|.
\end{equation}
The subscript \(M\) in \eqref{eq:graph-model-error} allows dependence on the fixed end,
but not on \(p\) in this interval.
\end{lemma}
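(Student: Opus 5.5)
The lemma splits into three parts: solving the constant-coefficient equation \eqref{eq:graph-w0-equation}, expanding the minimal graph operator \(\mathcal M_g\) around \(p\cdot x\), and collecting the resulting error terms.

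\emph{Construction of \(w_0\).} The operator \(\operatorname{div}_\delta(A^{-1}D\,\cdot)\) has constant symmetric positive definite coefficients. With \(A^{-1}=I-\eta_0^2\,p\otimes p\), its eigenvalues lie in \([\eta_0^2,1]\subset[1/2,1]\) for \(|p|\le1\). I will use the linear change of variables \(y=A^{-1/2}x\), which turns the operator into \(\Delta_y\). The right-hand side
\[
-\mathfrak c_p\,p\cdot Df_0=\frac{2m\,\mathfrak c_p}{n-2}(n-2)\,(p\cdot x)\,r^{-n}
\]
is homogeneous of degree \(1-n\) and odd.

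I will look for a homogeneous solution of degree \(3-n\). On the sphere this reduces to an equation \(\Delta_{S}\psi+k\psi=h\) with \(k=(3-n)(3-n+n-2)=3-n\), after accounting for the anisotropy of \(A\). The datum \(h\) is odd. Solvability therefore requires that \(k\ne\ell(\ell+n-2)\) on the odd harmonics, or that the resonant projection vanishes.

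For \(n\ge4\) we have \(k<0\), so \(-\Delta_S-k\) is invertible and there is no resonance. For \(n=3\) the degree is \(0\), and \(k=0\) is resonant only with the even harmonic \(\ell=0\). The odd datum is orthogonal to the constants, so an odd solution exists, unique modulo constants, and oddness fixes it.

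A cleaner alternative is to solve directly. Write \(w_0=(p\cdot x)\,\rho(\cdot)\), or use the explicit ansatz \(w_0=a\,(p\cdot x)\,|y|^{2-n}\) in the \(y\)-variables, corrected by a term proportional to \(|p|^2\) of the same homogeneity. Ellipticity on \(S^{n-1}\) then gives smoothness, and homogeneity plus linearity in \(m\) and \(p\) gives \eqref{eq:graph-w0-bounds}. For the bound I will use \(|\mathfrak c_p|\le C_n\) and the fact that the inverse operator on the sphere is bounded uniformly for \(|p|\le1\).

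\emph{Error estimate \eqref{eq:graph-model-error}.} This is the main step. I will expand \(\mathcal P_g(v_0)\) using \(g=(1+f_0)\delta+h\) with \(h=O_{2,\alpha}(r^{1-n})\). For the conformally flat part \(\tilde g=(1+f_0)\delta\) I will use the exact formula
\[
\Delta_{\tilde g}v=(1+f_0)^{-1}\bigl(\Delta v+\tfrac{n-2}{2}(1+f_0)^{-1}Df_0\cdot Dv\bigr),
\]
together with the analogous Christoffel terms in \(\nabla^2v\). I will also use \(|dv|^2_{\tilde g}=(1+f_0)^{-1}|Dv|^2\).

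Linearizing at \(Dv=p\), the principal coefficient becomes \(\delta-\eta_0^2p\otimes p+O(f_0)+O(|Dw_0|)\), which is \(A^{-1}\) to leading order. The first-order terms in \(Df_0\) are \(\frac{n-2}{2}Df_0\cdot p\) from the Laplacian part. The Hessian Christoffel contribution \(-\frac{v^iv^j}{W^2}\Gamma^k_{ij}v_k\) contributes \(\frac{\eta_0^2|p|^2}{2}\,p\cdot Df_0\) after simplification. Together these give exactly \(\mathfrak c_p\,p\cdot Df_0\), which \(w_0\) cancels by \eqref{eq:graph-w0-equation}. Verifying this coefficient is the step I expect to require the most care.

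The remaining terms are of order \(r^{-n}\), with all of them linear or higher in \(p\). These are
\begin{itemize}
\item the products \(f_0\,D^2w_0\), \(Df_0\,Dw_0\), \(f_0\,Df_0\);
\item the contributions of \(h\), namely \(Dh\cdot p\) of order \(r^{-n}\) and \(h\,D^2w_0\);
\item the quadratic terms \(Dw_0\,D^2w_0\).
\end{itemize}
Each carries a factor \(|p|\) because \(v_0\) vanishes when \(p=0\). The \(C^{1,\alpha}\) weighted control follows by differentiating on rescaled annuli, with constants depending on the end.

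\emph{Bounds \eqref{eq:graph-model-smallness}.} I will use \(|Dv_0|\le|p|+C|m||p|r^{2-n}\), \(|D^2v_0|\le C|m||p|r^{1-n}\), and \(g=\delta+O(r^{2-n})\). Enlarging \(R_0\) so that \(|m|r^{2-n}\le1\) and the metric and Christoffel errors are small then yields the stated bounds with a dimensional constant \(D_n\). Since \(r|\nabla_g^2v_0|\) includes \(r\,\Gamma\,p=O(r^{2-n})|p|\), this term is also absorbed.
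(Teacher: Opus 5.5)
Your proposal is correct. The construction of \(w_0\) is the paper's: reduce to \((\Delta_{S^{n-1}}+3-n)\psi=F_p\), invert for \(n\geq4\), take the odd solution for \(n=3\), and bound by homogeneity. There is one slip. A substitution \(y=Bx\) turns \(\operatorname{div}_\delta(A^{-1}D)\) into \(D_y\cdot(BA^{-1}B^{T})D_y\), so the normalizing choice is \(y=A^{1/2}x\); your \(y=A^{-1/2}x\) gives \(D_y\cdot A^{-2}D_y\). Your explicit ansatz is also not exact, because in the \(y\)-variables the source carries \((y\cdot A^{-1}y)^{-n/2}\) rather than \(|y|^{-n}\). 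The spherical inversion makes that ansatz unnecessary.

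The genuine difference is in the error estimate. You expand the nondivergence operator \(\mathcal P_g(v_0)\) and assemble \(\mathfrak c_p\) from the terms \(-g^{ij}\Gamma^k_{ij}v_k\) and \(W^{-2}v^iv^j\Gamma^k_{ij}v_k\). For \(g=(1+f_0)\delta\) these equal \(\tfrac{n-2}{2}(1+f_0)^{-2}Df_0\cdot Dv\) and \(\tfrac12(1+f_0)^{-3}W^{-2}|Dv|^2\,Df_0\cdot Dv\). So your coefficient check and the cancellation against \(\operatorname{div}_\delta(A^{-1}Dw_0)\) are right, and the leftover terms you list are \(O(|p|r^{-n})\). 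The paper instead writes \(\mathcal M_g(v)=(\det g)^{-1/2}\partial_iJ_g^i(v)\) with the flux density \eqref{eq:graph-flux-density} and expands \(J_g(v_0)\) itself. The same two numbers then come from \(\sqrt{\det g}\,g^{-1}=(1+\tfrac{n-2}{2}f_0)I+\cdots\) and from the expansion of \(W^{-1}\). The leading density \(\eta_0(p+A^{-1}Dw_0+\mathfrak c_pf_0p)\) is exactly divergence free, and one divergence of the \(O_{2,\alpha}(|p|r^{1-n})\) remainder gives \eqref{eq:graph-model-error}.

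What the paper's route buys:
\begin{itemize}
\item it avoids Christoffel symbols;
\item it yields \(\mathcal M_g\) directly rather than \(\mathcal P_g\); you should add the one-line division by \(W\), which is bounded together with its derivatives;
\item its expansion is reused later for the polarization \eqref{eq:graph-a-coefficients} and the mass computation \eqref{eq:minimal-flux-expansion}.
\end{itemize}
Your route makes the geometric origin of the two pieces of \(\mathfrak c_p\) more visible. The cost is that you track the remainder at the level of \(D^2v_0\) and \(Dg\) rather than at the flux level.
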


\begin{proof}
The change of variables \(y:=A^{1/2}x\) transforms
\(\operatorname{div}_\delta(A^{-1}D)\) into \(\Delta_y\).
The right-hand side of \eqref{eq:graph-w0-equation} is
\(-\mathfrak c_p\,p\cdot Df_0=2m\mathfrak c_p(p\cdot x)|x|^{-n}\), which is homogeneous of
degree \(1-n\) and odd in \(x\). Both properties are preserved under this
linear change of variables.
Setting \(\psi_p(\theta):=w_0(A^{-1/2}\theta)\), homogeneity reduces the equation to
\begin{equation}\label{eq:graph-sphere}
 (\Delta_{S^{n-1}}+3-n)\psi_p=F_p,\qquad
 F_p(\theta):=2m\mathfrak c_p(p\cdot A^{-1/2}\theta)
 |A^{-1/2}\theta|^{-n}.
\end{equation}
Here \(\theta\in S^{n-1}\) is a unit vector in the \(y\) coordinates.
For \(n\geq4\) this operator is invertible.
For \(n=3\), \(F_p\) is odd and hence has mean zero, so we choose the unique
odd solution.
For \(|p|\leq1\), the eigenvalues of \(A\) lie in \([1,2]\), and the smooth norms of
\(F_p\) are bounded by dimensional constants times \(|m||p|\).
Spherical elliptic estimates prove \eqref{eq:graph-w0-bounds}.

We compute the error explicitly.
In the asymptotic coordinates, set
\begin{equation}\label{eq:graph-flux-density}
 J_g(v):=\sqrt{\det g}\frac{g^{-1}Dv}{\sqrt{1+|Dv|_g^2}},
 \qquad
 \mathcal M_g(v)=(\det g)^{-1/2}\partial_iJ_g^i(v).
\end{equation}
From \eqref{eq:graph-harmonic}, Taylor expansion gives
\begin{align*}
 \sqrt{\det g}\,g^{-1}
 &=I+\frac{n-2}{2}f_0 I+O_{2,\alpha,M}(r^{1-n}),\\
 (1+|p+Dw_0|_g^2)^{-1/2}
 &=\eta_0\left(1-\eta_0^2p\cdot Dw_0
       +\frac12\eta_0^2|p|^2f_0\right)
       +O_{2,\alpha,M}(|p|^2r^{1-n}),
\end{align*}
where the last line uses \eqref{eq:graph-w0-bounds} and
\(4-2n\leq1-n\) for \(n\geq3\) to absorb
\(|Dw_0|^2=O_{2,\alpha,M}(|p|^2r^{4-2n})\) into the remainder.
Using \(A^{-1}=I-\eta_0^2p\otimes p\) and \eqref{eq:graph-w0-bounds}, multiplication
gives
\begin{equation}\label{eq:graph-model-flux}
 J_g(v_0)=\eta_0\left(p+A^{-1}Dw_0+\mathfrak c_pf_0p\right)
 +O_{2,\alpha,M}(|p|r^{1-n}).
\end{equation}
The vector density \(\eta_0(p+A^{-1}Dw_0+\mathfrak c_pf_0p)\) is divergence free by
\eqref{eq:graph-w0-equation}.
Taking its divergence proves \eqref{eq:graph-model-error}.
Finally, \eqref{eq:graph-w0-bounds}, the decay of \(g-\delta\), and a larger starting
radius prove \eqref{eq:graph-model-smallness}.
\end{proof}

On each auxiliary end define \(v_0:=0\).
There \eqref{eq:graph-model-error} holds with zero error, and
\eqref{eq:graph-model-smallness} is immediate.
The function \(v_0\) is defined only on the disjoint exterior ends.

\subsection{Global barriers}

\begin{lemma}\label{lem:graph-barriers}
There is \(\lambda_0(n)>0\) such that, for \(0<|p|\leq\lambda_0(n)\), the following assertions hold.
Choose a sufficiently large common radius \(R\) on all ends and put
\begin{equation}\label{eq:graph-radial}
 s:=n-2-\frac14,\qquad
 h(r):=b_0|p|R^{s+1}r^{-s}.
\end{equation}
There are globally defined, locally Lipschitz functions \(U^-\leq U^+\) on \(M\)
that are respectively a subsolution and a supersolution, and satisfy
\begin{equation}\label{eq:graph-tail-barriers}
 U^-=v_0-h,\qquad U^+=v_0+h\qquad\text{for }r\geq2R
\end{equation}
on every end.
They are constants on the compact core.
The radius \(R\) may depend on \(M\), whereas \(b_0\) and \(\lambda_0\) depend only on
\(n\).
\end{lemma}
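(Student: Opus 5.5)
We build the barriers in three pieces: a tail piece on each end where \(U^\pm=v_0\pm h\), a constant piece on the compact core, and a gluing region where we take a maximum (for \(U^-\)) or a minimum (for \(U^+\)).

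\textbf{Tail supersolution.} For \(U^+\), on \(\{r\geq R\}\) we compute \(\mathcal P_g(v_0+h)\). Since \(\mathcal P_g\) is quasilinear, we write
\[
\mathcal P_g(v_0+h)=\mathcal P_g(v_0)+a^{ij}(dv_0+\theta dh)\,\nabla^2_{ij}h+Q,
\]
where \(a^{ij}(\xi)=g^{ij}-\xi^i\xi^j/(1+|\xi|^2)\) and \(Q\) collects the terms from the variation of the coefficients. That variation is \(O(|dv_0|+|dh|)\,|dh|\,|\nabla^2 v_0|\).

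\begin{itemize}
\item By \eqref{eq:graph-model-smallness} and \(|dh|\lesssim b_0|p|(R/r)^{s+1}\), all slopes are \(O((D_n+b_0)|p|)\). Taking \(\lambda_0\) small makes \(a^{ij}=\delta^{ij}+O(\lambda_0(D_n+b_0))+o_R(1)\).
\item For the radial function \(h\) we have \(\Delta_\delta h=-s(n-2-s)h/r^2=-\tfrac14 s\,h/r^2<0\), since \(s<n-2\) is strictly subharmonic-critical. The perturbed operator therefore still gives \(a^{ij}\nabla^2_{ij}h\leq-c_n h/r^2\) once \(\lambda_0\) is small and \(R\) is large.
\item The coefficient-variation term \(Q\) is bounded by \(C\lambda_0\,(h/r)\,|\nabla^2v_0|\lesssim \lambda_0 h/r^2\), using \(r|\nabla^2v_0|\leq D_n|p|\). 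It is absorbed for small \(\lambda_0\).
\item The model error \(\mathcal P_g(v_0)=O_M(|p|r^{-n})\) from \eqref{eq:graph-model-error} is compared with \(c_nh/r^2=c_nb_0|p|R^{s+1}r^{-s-2}\). Since \(s+2=n-\tfrac14<n\), we get \(|p|r^{-n}\ll |p|R^{s+1}r^{-s-2}\) once \(R\) is large, and this choice of \(R\) depends on \(M\) but not on \(p\).
\end{itemize}

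Hence \(\mathcal M_g(v_0+h)\leq0\) on \(r\geq R\). By symmetry, \(v_0-h\) is a subsolution there. On auxiliary ends the same computation applies with \(v_0=0\).

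\textbf{Gluing.} Set \(U^+:=\min(v_0+h,\,K)\) on each end and \(U^+:=K\) on the core, with \(K:=b_0|p|R/2\), say, or any constant between the tail values at \(r=R\) and at \(r=2R\). Constants are minimal graphs, and a minimum of supersolutions is a (viscosity or weak) supersolution, so \(U^+\) is a locally Lipschitz supersolution. The gluing is consistent provided:
\begin{itemize}
\item at \(r=R\) the tail exceeds \(K\), so \(U^+=K\) near the inner boundary of the end and extends constantly over the core;
\item for \(r\geq2R\) the tail is below \(K\), so \(U^+=v_0+h\) there.
\end{itemize}

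At \(r=R\) we have \(h=b_0|p|R\) and \(|v_0|\leq D_n|p|R\). At \(r\geq 2R\) we have \(v_0+h\leq D_n|p|r+b_0|p|R(R/r)^s\), which grows linearly and is not below a constant.

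\textbf{Main obstacle.} The linear growth of \(v_0\) defeats a constant cap, so the gluing cannot be done exactly as above. The natural fix is to exploit the sign of \(v_0\): it is not one-signed, so the minimum with a constant must be taken only where the tail is small. Concretely, the gluing should happen on the compact region \(R\leq r\leq2R\), with a constant \(K_+=C b_0|p|R\) dominating \(v_0+h\) on \(r\leq 2R\)? That fails too, because the minimum would then equal the tail everywhere.

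The better route is to make \(h\) dominate on the annulus: choose \(h\) larger near \(r=R\), so the minimum switches to \(K\) only inside \(r<R\). In practice I would take
\[
U^+=\max\{\text{tail},\,-K\}
\]
for the subsolution and \(\min\{\text{tail},K\}\) for the supersolution, with \(K:=(b_0-D_n)|p|R\cdot2^{-s}\)-type constants and \(b_0\gg D_n\). These give \(U^+=K\) at \(r=R\), because there \(v_0+h\geq (b_0-D_n)|p|R>K\), and \(U^+=\) tail for \(r\) large.

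The region where the tail lies below \(K\) is not contained in \(r\geq2R\) exactly. Verifying that the switching set stays within \(\{R<r<2R\}\) by tuning \(K\) relative to \(b_0,D_n,s\) is the delicate step, and I expect to adjust the precise constant \(2R\) accordingly. Finally, \(U^-\leq U^+\) follows from \(-K\leq K\) together with \(v_0-h\leq v_0+h\).
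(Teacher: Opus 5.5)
Your tail computation is sound and matches the paper's. You have $\Delta_\delta h=-\tfrac s4\,h/r^2$; the coefficient terms are $O(|p|^2)\,h/r^2$, absorbed once $\lambda_0$ is small after $b_0=b_0(n)$ is fixed; and the model error is at most $C_Mb_0^{-1}R^{1-n}\,h/r^2$ on $r\geq R$.

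\textbf{Gap: the gluing.} You leave it unresolved, and the route you sketch, taking $\min\{v_0+h,K\}$ on the whole end, cannot work for any constant $K$. On the distinguished end $v_0+h\geq p\cdot x-C$ is unbounded above along the direction of $p$. So $\min\{v_0+h,K\}=K\neq v_0+h$ at points with $r\geq2R$, and \eqref{eq:graph-tail-barriers} fails. No tuning of $K$ confines the switching set to $\{R<r<2R\}$. The same defect breaks your ordering argument. The inequality $\max\{v_0-h,-K\}\leq\min\{v_0+h,K\}$ also requires $v_0-h\leq K$ and $v_0+h\geq-K$, which fail far out along $\pm p$; it does not follow from $-K\leq K$ and $v_0-h\leq v_0+h$ alone. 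Your specific constant $K=(b_0-D_n)2^{-s}|p|R$ is also too small. It lies below $h(2R)=b_0 2^{-s}|p|R$, so $v_0+h>K$ at points of $S_{2R}$ where $v_0\geq0$, for instance near $x=2Rp/|p|$.

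\textbf{The missing idea.} Being a supersolution is a local property, so the minimum only needs to be taken on the annulus. Put $U^+:=H$ on the core (including $r\leq R$), $U^+:=\min\{H,v_0+h\}$ on $R<r<2R$, and $U^+:=v_0+h$ on $r\geq2R$, with $H:=\tfrac12\bigl(h(R)+h(2R)\bigr)$.

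Consistency then needs only the strict inequalities $v_0+h>H$ on $S_R$ and $v_0+h<H$ on $S_{2R}$. Given these, the pieces agree on neighborhoods of both spheres, and the linear growth of $v_0$ beyond $2R$ is irrelevant. Using $|v_0|\leq D_n|p|r$, both inequalities follow from $\tfrac12 b_0(1-2^{-s})>2D_n$. This says the drop of $h$ across the annulus beats the oscillation of $v_0$ there.

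For $U^-\leq U^+$ on the annulus, also impose $b_0 2^{-s}>2D_n$. This gives $h>|v_0|$ for $R\leq r\leq 2R$, hence $\max\{-H,v_0-h\}<0<\min\{H,v_0+h\}$. On the core and the tails the ordering is immediate. The constants are chosen in the order $b_0$, then $\lambda_0$, then $R$, as in the paper.
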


\begin{proof}
We first verify the differential inequalities on the ends.
For the Euclidean radial function in \eqref{eq:graph-radial},
\begin{equation}\label{eq:graph-radial-laplacian}
 \Delta_\delta h=-\frac s4\frac h{r^2},\qquad
 |Dh|\leq C_n\frac h r,\qquad
 |D^2h|\leq C_n\frac h{r^2}.
\end{equation}
Set
\[
 \epsilon(R):=\max_E\sup_{x\in E,\,r(x)\geq R}
 \left(|g-\delta|_\delta+r|D(g-\delta)|_\delta\right),
\]
where the maximum is taken over all ends and \(D\) denotes differentiation
in the end coordinates. Asymptotic flatness gives \(\epsilon(R)\to0\) as
\(R\to\infty\). For sufficiently large \(R\), it follows that
\[
 \left|\Delta_gh+\frac s4\frac h{r^2}\right|
 \leq C_n\epsilon(R)\frac h{r^2}.
\]
For a covector \(q\), set
\[
 S(q):=\frac{q^\sharp\otimes q^\sharp}{1+|q|_g^2}.
\]
For \(f=v_0\) or \(h\), write \(f_i:=\nabla_i f\) and
\(f^i:=g^{ij}f_j\), with covariant derivatives taken with respect to \(g\).
Repeated upper and lower indices are summed. For \(\sigma\in\{-1,1\}\),
direct substitution into the nondivergence operator gives
\begin{align}\label{eq:graph-operator-difference}
 \mathcal P_g(v_0+\sigma h)
 &=\mathcal P_g(v_0)+\sigma\Delta_gh\notag\\
 &\quad-\left[
 \frac{(v_0^i+\sigma h^i)(v_0^j+\sigma h^j)}
 {1+g^{ab}(v_{0,a}+\sigma h_a)(v_{0,b}+\sigma h_b)}
 -\frac{v_0^iv_0^j}{1+g^{ab}v_{0,a}v_{0,b}}
 \right]\nabla_i\nabla_jv_0\notag\\
 &\quad-\sigma
 \frac{(v_0^i+\sigma h^i)(v_0^j+\sigma h^j)}
 {1+g^{ab}(v_{0,a}+\sigma h_a)(v_{0,b}+\sigma h_b)}
 \nabla_i\nabla_jh\notag\\
 &=\mathcal P_g(v_0)+\sigma\Delta_gh\notag\\
 &\quad-\left\langle S(dv_0+\sigma dh)-S(dv_0),
 \nabla_g^2v_0\right\rangle_g\notag\\
 &\quad-\sigma\left\langle S(dv_0+\sigma dh),
 \nabla_g^2h\right\rangle_g.
\end{align}
Here \(\langle\cdot,\cdot\rangle_g\) denotes complete contraction of the two tensor indices.
For covectors \(q_0,q_1\) of norm at most one at the same point,
\[
 |S(q_1)-S(q_0)|_g
 \leq C(|q_0|_g+|q_1|_g)|q_1-q_0|_g,
 \qquad |S(q_1)|_g\leq |q_1|_g^2.
\]
The first inequality follows by differentiating \(S\) along the segment
joining \(q_0\) to \(q_1\).
By \eqref{eq:graph-model-smallness}, \eqref{eq:graph-radial-laplacian},
and asymptotic flatness, for sufficiently large \(R\) we have
\[
 |dv_0|_g\leq C_n|p|,\qquad
 |\nabla_g^2v_0|_g\leq C_n\frac{|p|}{r},\qquad
 |dh|_g\leq C_n\frac hr,\qquad
 |\nabla_g^2h|_g\leq C_n\frac h{r^2}.
\]
Since \(h/r\leq b_0|p|\) for \(r\geq R\), taking
\(q_0=dv_0\) and \(q_1=dv_0+\sigma dh\) gives
\begin{align*}
 |S(dv_0+\sigma dh)-S(dv_0)|_g
 &\leq C_{n,b_0}|p|\frac hr,\\
 |S(dv_0+\sigma dh)|_g&\leq C_{n,b_0}|p|^2,
\end{align*}
provided \(|p|\) is sufficiently small depending only on \(n,b_0\).
Thus the last two terms in \eqref{eq:graph-operator-difference} satisfy
\begin{align*}
 \left|\left\langle S(dv_0+\sigma dh)-S(dv_0),
 \nabla_g^2v_0\right\rangle_g\right|
 &\leq C_{n,b_0}|p|\frac hr\frac{|p|}{r}
 =C_{n,b_0}|p|^2\frac h{r^2},\\
 \left|\left\langle S(dv_0+\sigma dh),
 \nabla_g^2h\right\rangle_g\right|
 &\leq C_{n,b_0}|p|^2\frac h{r^2}.
\end{align*}
Combining these bounds with \eqref{eq:graph-operator-difference} yields
\[
 \left|\mathcal P_g(v_0+\sigma h)-\mathcal P_g(v_0)-\sigma\Delta_gh\right|
 \leq C_{n,b_0}|p|^2\frac h{r^2}.
\]
By \eqref{eq:graph-model-error} and \eqref{eq:graph-model-smallness},
\(\mathcal P_g(v_0)=\sqrt{1+|dv_0|_g^2}\,\mathcal M_g(v_0)\)
satisfies \(|\mathcal P_g(v_0)|\leq C_M|p|r^{-n}\).
Dividing by \(h/r^2\) gives
\begin{equation}\label{eq:graph-error-ratio}
 \begin{aligned}
 \frac{|\mathcal P_g(v_0)|}{h/r^2}
 &\leq\frac{C_M|p|r^{-n}}{b_0|p|R^{s+1}r^{-s-2}}
 =\frac{C_M}{b_0}R^{-s-1}r^{s+2-n}\\
 &=\frac{C_M}{b_0}R^{5/4-n}r^{-1/4}
 =\frac{C_M}{b_0}R^{1-n}\left(\frac Rr\right)^{1/4}
 \leq\frac{C_M}{b_0}R^{1-n},
 \end{aligned}
\end{equation}
where we used \(s=n-2-\frac14\) and \(r\geq R\).
On auxiliary ends the residual is zero.
Consequently
\begin{equation}\label{eq:graph-barrier-error}
 \left|\mathcal P_g(v_0+\sigma h)+\sigma\frac s4\frac h{r^2}\right|
 \leq\left(C_n\epsilon(R)+C_{n,b_0}|p|^2
       +\frac{C_M}{b_0}R^{1-n}\right)\frac h{r^2}.
\end{equation}

We now choose \(b_0\), \(\lambda_0\), and \(R\), in this order, to ensure
the differential inequalities above and allow the barriers to be extended
across the compact core.
Choose \(b_0\) so large that
\begin{equation}\label{eq:graph-B-choice}
 \frac{b_0(1-2^{-s})}{2}>2D_n,
 \qquad b_02^{-s}>2D_n.
\end{equation}
Next choose \(\lambda_0(n)\) small enough that, for
\(0<|p|\leq\lambda_0\) and \(\sigma\in\{-1,1\}\),
\[
 |dv_0|_g\leq1,\qquad |d(v_0+\sigma h)|_g\leq1
 \quad\text{on }\{r\geq R\},
\]
and \(C_{n,b_0}\lambda_0^2<s/16\).
The gradient bounds follow from \eqref{eq:graph-model-smallness},
\eqref{eq:graph-radial-laplacian}, and \(h/r\leq b_0|p|\).
Finally choose \(R\) large enough that the remaining two coefficients in
\eqref{eq:graph-barrier-error} have sum less than \(s/16\).
All choices of \(R\) can be uniform for \(0<|p|\leq\lambda_0\).
This proves
\begin{equation}\label{eq:graph-strict-barriers}
 \mathcal M_g(v_0+h)<0,\qquad \mathcal M_g(v_0-h)>0\qquad(r\geq R).
\end{equation}

We have now constructed the barriers on each end. It remains to glue them
to constants across the compact core to obtain globally defined barriers.
For this purpose, set
\[
 H:=\frac{1+2^{-s}}2b_0|p|R=\frac{h(R)+h(2R)}2.
\]
Then \(h(2R)<H<h(R)\), and \eqref{eq:graph-B-choice} gives the margins
\[
 h(R)-H=H-h(2R)
 =\frac{b_0(1-2^{-s})}{2}|p|R>2D_n|p|R.
\]
Together with \(|v_0|\leq D_n|p|r\) from
\eqref{eq:graph-model-smallness}, these inequalities imply
\begin{align*}
 v_0+h&>H, &v_0-h&<-H &&\text{on }S_R,\\
 v_0+h&<H, &v_0-h&>-H &&\text{on }S_{2R}.
\end{align*}
Define, on each end,
\begin{equation}\label{eq:graph-gluing}
 \begin{aligned}
 U^+&:=
 \begin{cases}
 H,&\text{on the compact core},\\
 \min\{H,v_0+h\},&R<r<2R,\\
 v_0+h,&r\geq2R,
 \end{cases}\\
 U^-&:=
 \begin{cases}
 -H,&\text{on the compact core},\\
 \max\{-H,v_0-h\},&R<r<2R,\\
 v_0-h,&r\geq2R.
 \end{cases}
 \end{aligned}
\end{equation}
The strict inequalities on the two coordinate spheres imply that the
respective definitions agree in neighborhoods of these spheres.
On \(R\leq r\leq2R\), \eqref{eq:graph-B-choice} implies \(h>|v_0|\), so \(U^-\leq U^+\)
there. This ordering also holds on the core and the tails.

By the standard pasting property, \(U^-\) and \(U^+\) are respectively
a global weak subsolution and supersolution. The weak comparison principle
therefore applies to these barriers on bounded domains.
\end{proof}

\subsection{The exhaustion and its asymptotics}

\begin{proof}[Proof of Theorem~\ref{thm:affine-graph}]
Fix \(p\) and the barriers of Lemma~\ref{lem:graph-barriers}.
For \(T>2R\), remove the parts \(r>T\) from all ends and denote the resulting
connected compact domain by \(\Omega_T\).
For large \(T\), its boundary is smooth and strictly mean convex.
Lemma~\ref{lem:graph-dirichlet} gives \(u_T\) with
\begin{equation}\label{eq:graph-exhaustion}
 \begin{cases}
 \mathcal M_g(u_T)=0&\text{in }\Omega_T,\\
 u_T=v_0&\text{on the distinguished sphere},\\
 u_T=0&\text{on the other spheres}.
 \end{cases}
\end{equation}
The prescribed data lie between \(U^-\) and \(U^+\).
Comparison therefore gives
\begin{equation}\label{eq:graph-uniform-trapping}
 U^-\leq u_T\leq U^+\quad\text{on }\Omega_T.
\end{equation}
In particular, for every compact \(K\Subset M\), \(\sup_K|u_T|\) is bounded
independently of sufficiently large \(T\).
Spruck's interior gradient estimate \cite[Theorem~1.1]{Spruck2007}, followed by
interior elliptic regularity, gives bounds for all derivatives on \(K\),
independently of sufficiently large \(T\).
A diagonal subsequence converges smoothly on compact subsets to a global
solution \(u\).
Passing \eqref{eq:graph-uniform-trapping} to the limit proves
\begin{equation}\label{eq:graph-height-decay}
 |u-v_0|\leq h\quad\text{on }E_0\cap\{r\geq2R\},\qquad
 |u|\leq h\quad\text{on the other tails}.
\end{equation}

We next establish the corresponding estimates for derivatives.
Let \(x_L\) be a point with \(|x_L|=L\) on an end.
On the coordinate ball \(B_{L/4}(x_L)\), introduce the rescaled coordinates
\(x=x_L+Ly\), where \(|y|<1/4\), and set
\[
 g_L(y):=g(x_L+Ly),\qquad
 U_L(y):=\frac{u(x_L+Ly)}{L}.
\]
The function \(U_L\) solves \(\mathcal M_{g_L}(U_L)=0\).
The matrices \(g_L\) converge to \(\delta\) in \(C^{2,\alpha}\) as
\(L\to+\infty\), and
\eqref{eq:graph-height-decay} and \eqref{eq:graph-model-smallness} give
\(|U_L|\leq C\) on \(B_{1/4}(0)\), with \(C\) independent of \(L\).
Set \(\widetilde U_L:=U_L+C\). Since adding a constant does not change
the minimal graph equation,
\[
 \mathcal M_{g_L}(\widetilde U_L)=0,\qquad
 0\leq\widetilde U_L\leq2C\quad\text{on }B_{1/4}(0).
\]
Applying Spruck's interior gradient estimate to \(\widetilde U_L\)
gives a uniform bound for \(D\widetilde U_L\) on a smaller ball.
The standard interior H\"older estimate for the gradient, followed by
Schauder estimates for the uniformly elliptic equation, gives uniform
\(C^{3,\alpha}\) bounds for \(\widetilde U_L\) there, with constants independent of \(L\) for
sufficiently large \(L\).
Since \(D^j\widetilde U_L=D^jU_L\) for \(j\geq1\), these derivative bounds
also hold for \(U_L\).
For \(v_0=p\cdot x+w_0\), we have \(Dv_0=p+Dw_0\) and
\(D^jv_0=D^jw_0\) for \(j\geq2\). Thus \eqref{eq:graph-w0-bounds}
gives uniform \(C^{3,\alpha}\) bounds for \(v_0(x_L+Ly)/L\) on the
same smaller balls. On auxiliary ends these bounds are immediate since
\(v_0=0\).

Write \(z:=u-v_0\) on the distinguished end, and \(z:=u\) on an auxiliary end.
For each fixed \(x\), apply the fundamental theorem of calculus to
\(t\mapsto J_g^i(v_0+tz)(x)\), \(0\leq t\leq1\), where \(J_g\) is defined
in \eqref{eq:graph-flux-density}. The chain rule gives
\[
 J_g^i(u)-J_g^i(v_0)
 =\int_0^1\frac{d}{dt}J_g^i(v_0+tz)\,dt
 =a^{ij}\partial_jz,
\]
where \(a^{ij}\) is defined by
\begin{equation}\label{eq:graph-a-coefficients}
 a^{ij}(x):=\int_0^1
 \frac{\partial}{\partial\xi_j}
 \left(\sqrt{\det g}\frac{g^{i\ell}\xi_\ell}
 {\sqrt{1+g^{k\ell}\xi_k\xi_\ell}}\right)_
 {\xi=Dv_0+t(Du-Dv_0)}dt.
\end{equation}
Taking the coordinate divergence and using \(\mathcal M_g(u)=0\), we obtain
\begin{equation}\label{eq:graph-polarized}
 \partial_i(a^{ij}\partial_jz)=f,
 \qquad f:=-\sqrt{\det g}\,\mathcal M_g(v_0).
\end{equation}
The bounds for \(Du\) and \(Dv_0\) uniformly bound every covector
\(Dv_0+t(Du-Dv_0)\), \(0\leq t\leq1\), in
\eqref{eq:graph-a-coefficients}. Since \(g\) is uniformly comparable to
\(\delta\) on the ends, the matrices in that integral are uniformly
positive definite. Hence the operator
\(\partial_i(a^{ij}\partial_j\,\cdot)\) in \eqref{eq:graph-polarized}
is uniformly elliptic outside a compact set.
Moreover, \eqref{eq:graph-a-coefficients} depends smoothly on \(g\),
\(Du\), and \(Dv_0\). The \(C^{2,\alpha}\) bounds for \(g_L\) and the
\(C^{3,\alpha}\) bounds for \(U_L\) and \(v_0(x_L+Ly)/L\) therefore
give uniform \(C^{2,\alpha}\) bounds for \(a^{ij}(x_L+Ly)\) on smaller
fixed balls, with constants independent of \(L\).
Equation~\eqref{eq:graph-model-error} gives \(f=O_{1,\alpha}(r^{-n})\) on \(E_0\),
while \(f=0\) on the other ends.

For \(Z_L(y):=z(x_L+Ly)\), equation~\eqref{eq:graph-polarized} becomes
\[
 \partial_{y_i}\left(a^{ij}(x_L+Ly)\partial_{y_j}Z_L\right)
 =L^2f(x_L+Ly).
\]
By \eqref{eq:graph-height-decay}, \(|z|\leq h\) on the tails. Since
\(3L/4\leq|x_L+Ly|\leq5L/4\) for \(|y|<1/4\), this gives
\(\|Z_L\|_{C^0(B_{1/4}(0))}\leq CL^{-s}\) for sufficiently large \(L\),
with \(C\) independent of \(L\).
The linear interior Schauder estimate on nested balls therefore gives
\begin{equation}\label{eq:graph-scaled-schauder}
 \|Z_L\|_{C^{3,\alpha}}
 \leq C\left(\|Z_L\|_{C^0}+L^{2-n}\right)
 \leq C\left(L^{-s}+L^{2-n}\right)
 \leq CL^{-s}.
\end{equation}
The norms on the right are taken on a slightly larger fixed ball.
Covering each annulus by finitely many such balls yields
\begin{equation}\label{eq:graph-derivative-decay}
 u-v_0=O_{3,\alpha}(r^{-s})\quad\text{on }E_0,
 \qquad u=O_{3,\alpha}(r^{-s})\quad\text{on other ends}.
\end{equation}
This proves \eqref{eq:graph-main-asymptotics}.

We next derive the leading harmonic term in the expansion on each
auxiliary end.
On such an end the equation is
\[
 \partial_i(B^{ij}\partial_ju)=0,\qquad
 B^{ij}:=\frac{\sqrt{\det g}\,g^{ij}}{\sqrt{1+|du|_g^2}}.
\]
By \eqref{eq:af-definition} and \eqref{eq:graph-derivative-decay},
\[
 B-I=O_{2,\alpha}(r^{-\kappa}),\qquad
 \kappa:=\min\{\tau_E,2n-\tfrac52\}>\frac12.
\]
Consequently
\begin{equation}\label{eq:graph-aux-poisson}
 \Delta_\delta u
 =\partial_i\bigl((\delta^{ij}-B^{ij})\partial_ju\bigr)
 =O_{1,\alpha}(r^{-n-\kappa+1/4})
 =O_{1,\alpha}(r^{-n-1/4}).
\end{equation}
Identify the end with \(\{x\in\mathbb R^n:|x|>R_0\}\) in its asymptotic
coordinates. Choose \(R_0<R_1<R_2\) and a smooth cutoff
\(\chi:\mathbb R^n\to[0,1]\) such that
\[
 \chi=0\quad\text{for }|x|\leq R_1,\qquad
 \chi=1\quad\text{for }|x|\geq R_2.
\]
Define
\[
 \widetilde u(x):=
 \begin{cases}
 \chi(x)u(x),&|x|>R_0,\\
 0,&|x|\leq R_0.
 \end{cases}
\]
Since \(\chi\) vanishes near the inner boundary, \(\widetilde u\) is smooth
on \(\mathbb R^n\). Moreover, \(\widetilde u=u\) for \(|x|\geq R_2\),
so this extension preserves the asymptotic behavior of \(u\).
Set \(F:=\Delta_\delta\widetilde u\). On \(|x|>R_0\), the product rule gives
\[
 F=\chi\Delta_\delta u+2D\chi\cdot Du+u\Delta_\delta\chi.
\]
The last two terms are supported in \(\{R_1\leq|x|\leq R_2\}\).
Thus \(F\) is smooth on \(\mathbb R^n\) and, by
\eqref{eq:graph-aux-poisson}, satisfies
\(F=O_{1,\alpha}(r^{-n-1/4})\) as \(r\to\infty\).
Let
\[
 \Gamma(x):=-\frac{|x|^{2-n}}{(n-2)\omega_{n-1}},\qquad
 \Delta_\delta\Gamma=\delta_0,
\]
where \(\omega_{n-1}\) is the area of the unit sphere.
Since \(\widetilde u(x)\to0\) as \(|x|\to\infty\) and
\(F=\Delta_\delta\widetilde u=O(r^{-n-1/4})\), the Newton potential
representation gives
\[
 \widetilde u(x)=\int_{\mathbb R^n}\Gamma(x-y)F(y)\,dy.
\]
We use this formula to obtain a sharper asymptotic expansion of \(u\)
on the end.
The decay of \(F\) implies
\[
 \int_{\mathbb R^n}|y|^{1/8}|F(y)|\,dy<\infty.
\]
Set \(r:=|x|\). To estimate
\[
 \widetilde u(x)-\Gamma(x)\int_{\mathbb R^n}F(y)\,dy
 =\int_{\mathbb R^n}[\Gamma(x-y)-\Gamma(x)]F(y)\,dy,
\]
we split the integral into \(|y|<r/2\) and \(|y|\geq r/2\).
On the first region, the mean value theorem gives
\[
 |\Gamma(x-y)-\Gamma(x)|
 \leq C_nr^{1-n}|y|
 \leq C_nr^{2-n-1/8}|y|^{1/8}.
\]
Hence
\begin{align*}
 \left|\int_{|y|<r/2}[\Gamma(x-y)-\Gamma(x)]F(y)\,dy\right|
 &\leq Cr^{2-n-1/8}\int_{\mathbb R^n}|y|^{1/8}|F(y)|\,dy\\
 &\leq Cr^{2-n-1/8}.
\end{align*}
On the second region, \(|F(y)|\leq C|y|^{-n-1/4}\) implies
\[
 \int_{|y|\geq r/2}|F(y)|\,dy
 \leq C\int_{r/2}^{\infty}\rho^{-1-1/4}\,d\rho
 \leq Cr^{-1/4}.
\]
To handle the singularity of \(\Gamma(x-y)\), split this region further
according to whether \(|x-y|<r/2\) or \(|x-y|\geq r/2\). Then
\begin{align*}
 \int_{\substack{|y|\geq r/2\\
 |x-y|<r/2}}
 |\Gamma(x-y)||F(y)|\,dy
 &\leq Cr^{-n-1/4}\int_{|w|<r/2}|w|^{2-n}\,dw
 \leq Cr^{2-n-1/4},\\
 \int_{\substack{|y|\geq r/2\\
 |x-y|\geq r/2}}
 |\Gamma(x-y)||F(y)|\,dy
 &\leq Cr^{2-n}\int_{|y|\geq r/2}|F(y)|\,dy
 \leq Cr^{2-n-1/4}.
\end{align*}
The subtracted term satisfies the same bound,
\[
 |\Gamma(x)|\int_{|y|\geq r/2}|F(y)|\,dy
 \leq Cr^{2-n-1/4}.
\]
Combining these estimates and using \(\widetilde u=u\) for
\(|x|\geq R_2\) yields
\begin{equation}\label{eq:graph-newton-expansion}
 u(x)=\Gamma(x)\int_{\mathbb R^n}F(y)\,dy+O(r^{2-n-1/8}).
\end{equation}
Set
\[
 b_E:=-\frac{1}{(n-2)\omega_{n-1}}\int_{\mathbb R^n}F(y)\,dy.
\]
Since \(r^{2-n}\) is harmonic away from the origin, on \(r\geq R_2\) we have
\[
 \Delta_\delta(u-b_Er^{2-n})=F=O_{1,\alpha}(r^{-n-1/4}).
\]
On rescaled annuli the source term is therefore
\(O_{1,\alpha}(L^{2-n-1/4})\), while \eqref{eq:graph-newton-expansion}
gives \(u-b_Er^{2-n}=O(L^{2-n-1/8})\).
Interior Schauder estimates on nested annuli yield
\(u-b_Er^{2-n}=O_{3,\alpha}(r^{2-n-1/8})\).
This proves \eqref{eq:graph-aux-asymptotics}.

The gradient of \(u\) is bounded on all ends by the expansions and on the
compact core by smoothness.
The graph metric is complete because \(g+du\otimes du\geq g\).
If two solutions have the prescribed end limits after subtracting the same
model, their difference tends to zero at every end and solves a linear
elliptic equation obtained by polarizing the minimal operator.
Applying the maximum principle on \(\Omega_T\) and letting \(T\to\infty\)
proves uniqueness.
Uniqueness implies local smooth convergence of the entire family
\((u_T)\) to this solution.
\end{proof}

\section{Scalar curvature of a minimal graph}\label{sec:graph-geometry}

The key identity in this section is \eqref{eq:3.5}. It shows that
the scalar curvature of the conformal graph metric contains a
nonnegative quadratic term in the Hessian of the graph function.
This allows us to derive Hessian estimates for the coordinate
functions from the mass--energy inequality.

Let \((M^n,g)\) satisfy the hypotheses of
Theorem~\ref{thm:affine-graph}, and let \(u:=u_p\) be one of its
minimal graph functions.  Define the lapse and graph metric by
\begin{equation}
 \eta:=(1+|du|_g^2)^{-1/2},\qquad
 \bar g:=g+du\otimes du.
\label{eq:3.1}
\end{equation}
Equip the graph with the downward unit normal
\(\nu:=\eta(\nabla u,-1)=-N_u\), where \(N_u:=\eta(-\nabla u,1)\) is the upward unit normal,
and define its second fundamental form by
\[
 h(U,V):=\langle\nabla^{g+dt^2}_U\nu,V\rangle.
\]
Then \(h=\eta\nabla^2u\), and
\[
 \eta(\nabla^ju)h_{ij}=-(d\log\eta)_i.
\]
Vertical translations preserve minimality, so
\(\eta=-\langle\nu,\partial_t\rangle\) satisfies the Jacobi equation
\[
 \bar\Delta\eta+
 \bigl(|h|_{\bar g}^2+\operatorname{Ric}_{g+dt^2}(\nu,\nu)\bigr)\eta=0.
\]
The Gauss equation reads
\(R_{\bar g}=R_g-2\operatorname{Ric}_{g+dt^2}(\nu,\nu)-|h|_{\bar g}^2\).
Eliminating the ambient Ricci term gives the minimal graph case of the
Schoen--Yau identity \cite{SchoenYau1981}:

\begin{equation}
 R_{\bar g}
 =R_g+|h|_{\bar g}^2+2|d\log\eta|_{\bar g}^2
 +2\bar\Delta\log\eta.
\label{eq:3.3}
\end{equation}
Here \(\bar\Delta\) is the Laplacian of \(\bar g\).

To cancel the Laplacian term, set

\begin{equation}
 \widetilde g:=\eta^{2/(n-1)}\bar g.
\label{eq:3.4}
\end{equation}

The conformal scalar curvature formula gives
\[
 R_{\widetilde g}
 =\eta^{-2/(n-1)}\left[
 R_{\bar g}-2\bar\Delta\log\eta
 -\frac{n-2}{n-1}|d\log\eta|_{\bar g}^2\right].
\]
Substituting \eqref{eq:3.3} yields

\begin{equation}
 R_{\widetilde g}
 =\eta^{-2/(n-1)}
 \left(
 R_g+|h|_{\bar g}^2+
 \frac n{n-1}|d\log\eta|_{\bar g}^2
 \right).
\label{eq:3.5}
\end{equation}

The corresponding volume forms are
\begin{equation}
 dV_{\bar g}=\eta^{-1}dV_g,\qquad
 dV_{\widetilde g}=\eta^{1/(n-1)}dV_g.
\label{eq:3.6}
\end{equation}

In particular, \(R_g\geq0\) implies \(R_{\widetilde g}\geq0\).
The gradient \(du\) is bounded on every end by
Theorem~\ref{thm:affine-graph} and on the compact core by smoothness.
Thus \(\inf_M\eta>0\), and
\[
 \widetilde g\geq(\inf_M\eta)^{2/(n-1)}g
\]
proves completeness.  This lower bound may depend on \((M,g,u)\), whereas
the comparisons on regions of small coordinate defect have constants
depending only on the dimension.

Throughout the following argument, up to Subsection~\ref{subsec:density},
assume that \(R_g\geq0\), \(R_g\in L^1(M)\), and the distinguished end has
harmonic asymptotics. On the distinguished end,
\eqref{eq:graph-main-asymptotics} and \eqref{eq:graph-w0-bounds} give
\(Du=p+O_{2,\alpha}(r^{2-n})\). Together with
\(g=I+O_{2,\alpha}(r^{2-n})\), this implies
\[
 \bar g=A+O_{2,\alpha}(r^{2-n}),\qquad
 \eta=\eta_0+O_{2,\alpha}(r^{2-n}),
\]
where \(A=I+p\otimes p\) and \(\eta_0=(1+|p|^2)^{-1/2}\) are defined in
\eqref{eq:graph-parameters}. Hence, in the original end coordinates,
\[
 \widetilde g=\eta_0^{2/(n-1)}A+O_{2,\alpha}(r^{2-n}).
\]
In the linear coordinates \(y=\eta_0^{1/(n-1)}A^{1/2}x\), this becomes
\[
 \widetilde g_{ab}(y)=\delta_{ab}+O_{2,\alpha}(|y|^{2-n}).
\]
The decay order is unchanged because the coordinate transformation is
linear and invertible, with \(|y|\) comparable to \(|x|\).
Thus the distinguished end is AF for \(\widetilde g\).
On each auxiliary end,
\eqref{eq:graph-aux-asymptotics} gives
\(du=O_{2,\alpha}(r^{1-n})\) and
\(\widetilde g-g=O_{2,\alpha}(r^{2-2n})\), so that end remains AF.

Moreover, \eqref{eq:3.5}--\eqref{eq:3.6} give
\begin{equation}
 R_{\widetilde g}\,dV_{\widetilde g}
 =\eta^{-1/(n-1)}
 \left(
 R_g+|h|_{\bar g}^{2}
 +\frac n{n-1}|d\log\eta|_{\bar g}^{2}
 \right)dV_g.
 \label{eq:tilted-density}
\end{equation}
The graph expansions give \(h,d\log\eta=O(r^{1-n})\) on the
distinguished end and faster decay on the auxiliary ends.  Both terms
therefore lie in \(L^2(M,dV_g)\).  Since \(\eta\) is bounded away from
zero and \(R_g\in L^1(M,dV_g)\), the preceding identity proves
\(R_{\widetilde g}\in L^1(M,dV_{\widetilde g})\).
Thus each tilted metric satisfies the geometric hypotheses of
Theorem~\ref{thm:positive-mass} and of the scalar construction below.

\section{ADM mass of the tilted minimal graphs}\label{sec:tilted-mass}

In this section, we prove the mass formula
\eqref{eq:tilted-mass-formula}. After normalizing the limiting metric by
a linear change of coordinates, we compute the ADM contributions of the
leading terms in the metric expansion. The contribution involving \(w_0\)
is determined by a spherical integral that can be evaluated using its
equation. Combining these contributions cancels the direction-dependent
spherical integrals and gives \eqref{eq:tilted-mass-formula}. This formula
will allow us to apply the mass--energy inequality to \(\widetilde g\)
and obtain integral bounds for the Hessians of the graph functions.

\begin{proposition}[Mass of a tilted graph]\label{prop:tilted-mass}
Let \((M^n,g)\), \(n\geq3\), be a smooth, connected, complete, oriented
manifold without boundary and with finitely many AF ends.  Suppose the
distinguished end has harmonic asymptotics.  For
\(0<|p|\leq\lambda_0(n)\), let \(u:=u_p\) be the minimal graph function
from Theorem~\ref{thm:affine-graph}.  Define
\[
 \widetilde g
 :=(1+|du|_g^2)^{-1/(n-1)}(g+du\otimes du).
\]
Then \((M^n,\widetilde g)\) is asymptotically flat in suitable
coordinates, and its ADM mass is
\begin{equation}
 m(\widetilde g)
 =(1+|p|^2)^{1/[2(n-1)]}m(g).
\label{eq:tilted-mass-formula}
\end{equation}
\end{proposition}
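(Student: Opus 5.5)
The plan is to compute the ADM mass of \(\widetilde g\) directly from the asymptotic expansions in Theorem~\ref{thm:affine-graph}, working in the original end coordinates \(x\) and only at the end passing to \(y=\eta_0^{1/(n-1)}A^{1/2}x\). Asymptotic flatness of \(\widetilde g\) was already checked in Section~\ref{sec:graph-geometry}, so only \eqref{eq:tilted-mass-formula} needs proof.

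\textbf{Step 1: reduce to the leading terms.} Write \(G_0:=\eta_0^{2/(n-1)}A\) for the constant limit metric. The flux in \eqref{eq:adm-mass} for the \(y\)-coordinates equals the \(G_0\)-covariant expression
\[
 \frac{1}{2(n-1)\omega_{n-1}}\lim_{r\to\infty}\int
 \bigl(\operatorname{div}_{G_0}k-d\operatorname{tr}_{G_0}k\bigr)(\nu_{G_0})\,dA_{G_0},
 \qquad k:=\widetilde g-G_0 .
\]
One may integrate over the images of the spheres \(S_r\), since the flux form is closed up to integrable errors. By \eqref{eq:graph-harmonic} and \eqref{eq:graph-main-asymptotics},
\[
 g=(1+f_0)\delta+O_{2,\alpha}(r^{1-n}),\qquad
 Du=p+Dw_0+O_{2,\alpha}(r^{1-n+1/4}).
\]
Any term in \(k\) that is \(O_{1}(r^{2-n-\beta})\) with \(\beta>0\) contributes nothing to the limit. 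Linearizing in \(f_0\) and \(Dw_0\), the relevant part of \(k\) is
\[
 k_1=\eta_0^{2/(n-1)}\Bigl[f_0\,\delta+p\otimes Dw_0+Dw_0\otimes p
 -\tfrac{1}{n-1}\eta_0^{2}\bigl(2p\cdot Dw_0-|p|^2f_0\bigr)A\Bigr].
\]
Here the last term comes from \(|du|_g^2=|p|^2-|p|^2f_0+2p\cdot Dw_0+\dots\) and the expansion of \(\eta^{2/(n-1)}\).

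\textbf{Step 2: split the flux.} The mass splits into an \(f_0\)-part and a \(w_0\)-part. The \(f_0\)-part is explicit: \(f_0=\frac{2m}{n-2}r^{2-n}\), and the integrals of \(Df_0\) paired with the fixed tensors \(\delta\), \(A\), \(p\otimes p\) over the ellipsoidal spheres reduce to elementary spherical moments. Each \(w_0\)-term has the form \(\int_{S_r}(\text{constant tensor})\cdot D^2w_0\,\nu\), or a trace of such. Because \(D^2w_0\) is homogeneous of degree \(1-n\), each such integral is independent of \(r\).

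\textbf{Step 3: evaluate the \(w_0\)-integrals (main obstacle).} I expect this to be the main obstacle. I would first rewrite every \(w_0\)-integrand, via the divergence theorem on \(S_r\) and on annuli, as a flux of the field \(A^{-1}Dw_0\) or of \(\partial_q(A^{-1}Dw_0)\) for a fixed vector \(q\). I would then substitute \eqref{eq:graph-w0-equation}, using the Green identity with the \(A\)-harmonic linear functions \(\ell(x)=q\cdot x\):
\[
 \int_{\partial B}\bigl(\ell\,A^{-1}Dw_0-w_0\,A^{-1}D\ell\bigr)\cdot\nu
 =-\mathfrak c_p\int_{B}\ell\,p\cdot Df_0 .
\]
Because of the homogeneities involved, the boundary terms at a small inner sphere and at \(S_r\) scale in a definite way. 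Taking \(q=A^{-1}p\) and using that \(f_0\) is radial, this expresses each \(w_0\)-integral as an explicit multiple of \(\mathfrak c_p\,m\) times a spherical moment of \(p\). I would first try to check the bookkeeping in the Euclidean-diagonal case \(p=|p|e_1\), where all ellipsoidal integrals become one-dimensional.

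\textbf{Step 4: combine.} Adding the two contributions, the direction-dependent moments (the ones involving \(|p|^2\) and \(\eta_0^2|p|^2\) through \(\mathfrak c_p\)) should cancel exactly. The choice \(\mathfrak c_p=\frac{n-2+\eta_0^2|p|^2}{2}\) in \eqref{eq:graph-parameters} is precisely what makes this happen. The remaining factor \(\eta_0^{-1/(n-1)}=(1+|p|^2)^{1/[2(n-1)]}\) comes from two sources: the overall conformal scale \(\eta_0^{2/(n-1)}\), which multiplies the mass by \(\eta_0^{(n-2)/(n-1)}\) after rescaling to unit-normalized coordinates, and \(\det A^{1/2}=\eta_0^{-1}\), which enters when the sphere integrals are converted to \(y\)-coordinates. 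Together these give \eqref{eq:tilted-mass-formula}. As a check, at \(p=0\) the formula must reduce to \(m(\widetilde g)=m(g)\).
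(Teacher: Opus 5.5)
Your proposal is correct and follows the paper's overall strategy: linearize \(\widetilde g\) at infinity, split the ADM flux into \(f_0\)- and \(w_0\)-contributions, evaluate the latter by testing \eqref{eq:graph-w0-equation} against linear functions, then rescale. The bookkeeping differs in two ways.

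\emph{Choice of spheres.} The paper passes to \(y=A^{1/2}x\) and integrates over round \(y\)-spheres, where \(f_0\) is anisotropic. This forces the ellipsoidal integrals \(J_0,J_2\) and the determinant identity, with \(\eta_0^{-1}\) entering through \(J_0\). You integrate the \(G_0\)-covariant flux over \(x\)-spheres instead. This is legitimate, since the divergence of the flux field is \(R_{\widetilde g}+O(r^{2-2n})\), which is integrable. There \(f_0\) is radial, only the moments \(\int_{S^{n-1}}\theta_i\theta_j\,d\theta=\omega_{n-1}\delta_{ij}/n\) appear, and \(\eta_0^{-1}=(\det A)^{1/2}\) comes from \(dV_{G_0}\).

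\emph{The \(w_0\)-terms.} The paper dismisses the cross term at once as the pure gauge term \(\mathcal L_{V_0}A\); you have to show its flux cancels. Your Green identity with \(\ell=q\cdot x\) is more roundabout than needed. With \(V:=A^{-1}Dw_0\), the identity \(\partial_qV-q\operatorname{div}V=\operatorname{div}(V\otimes q-q\otimes V)\) gives \(\int_S\partial_qV\cdot\nu\,dA=\int_S(q\cdot\nu)\operatorname{div}V\,dA\) on any closed surface \(S\). Write \(k_1=\eta_0^{2/(n-1)}K\), \(\kappa=\eta_0^2|p|^2\), and \(Y_m:=(A^{-1})^{ij}\partial_iK_{jm}-\partial_m\operatorname{tr}(A^{-1}K)\). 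The mass is then \(\frac{\eta_0^{(n-2)/(n-1)}\eta_0^{-1}}{2(n-1)\omega_{n-1}}\lim_{r\to\infty}\int_{S_r}A^{-1}Y\cdot\nu\,dA\). The cross term gives \(A^{-1}Y=-\partial_qV+q\operatorname{div}V\) with \(q=A^{-1}p\), whose flux vanishes. The scalar term gives \(-\kappa A^{-1}Df_0+2\eta_0^2\partial_pV\), so there the relevant direction is \(p\), not \(A^{-1}p\) as you wrote.

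\emph{The result.} The \(f_0\delta\) term contributes \(\frac{2(n-1)\omega_{n-1}m}{n}(n-2\kappa)\). The scalar term contributes \(\frac{2\omega_{n-1}m\kappa}{n}\bigl[(n-\kappa)+2\mathfrak c_p\bigr]=\frac{4(n-1)\omega_{n-1}m\kappa}{n}\). The total is \(2(n-1)\omega_{n-1}m\), which gives \eqref{eq:tilted-mass-formula}.

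Your route is computationally lighter. The paper's gauge observation explains more conceptually why the cross term drops out.
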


\begin{proof}
\noindent\textbf{Step 1. Expand the metric and normalize the coordinates.}
We identify the leading terms that can contribute to the ADM mass and
choose coordinates in which the limiting metric is Euclidean.
Write
\begin{equation}
 \begin{gathered}
 m:=m(g),\qquad \eta:=(1+|du|_g^2)^{-1/2},\qquad
 \eta_0:=(1+|p|^2)^{-1/2},\\
 A:=\delta+p\otimes p,\qquad
 \kappa:=\eta_0^2|p|^2,\qquad
 \mathfrak c_p:=\frac{n-2+\kappa}{2}.
 \end{gathered}
 \label{eq:4.2}
\end{equation}
Harmonic asymptotics give
\begin{equation}
 g=(1+f_0)\delta+o_{2,\alpha}(r^{2-n}),\qquad
 f_0:=\frac{2m}{n-2}r^{2-n}.
 \label{eq:4.1}
\end{equation}

Put \(\widehat u:=u-p\cdot x\). Theorem~\ref{thm:affine-graph} gives
\(\widehat u=w_0+O_{3,\alpha}(r^{2-n+1/4})\), and hence
\begin{equation}
 d\widehat u=dw_0+o_{1,\alpha}(r^{2-n}),\qquad
 D^2\widehat u=D^2w_0+o_{0,\alpha}(r^{1-n}),
 \label{eq:leading-w-expansion}
\end{equation}
where \(w_0\) is homogeneous of degree \(3-n\).
Using \eqref{eq:4.1} and \eqref{eq:leading-w-expansion}, Taylor expansion gives
\begin{align*}
 \sqrt{\det g}\,g^{-1}
 &=\left(1+\frac{n-2}{2}f_0\right)I+o_{2,\alpha}(r^{2-n}),\\
 (1+|p+D\widehat u|_g^2)^{-1/2}
 &=\eta_0\left(1-\eta_0^2p\cdot Dw_0
 +\frac12\eta_0^2|p|^2f_0\right)+o_{1,\alpha}(r^{2-n}).
\end{align*}
Since \(f_0,Dw_0=O_{1,\alpha}(r^{2-n})\), their quadratic products are
\(o_{1,\alpha}(r^{2-n})\). Moreover,
\[
 A^{-1}Dw_0=Dw_0-\eta_0^2(p\cdot Dw_0)p,
 \qquad \mathfrak c_p=\frac{n-2+\eta_0^2|p|^2}{2}.
\]
Multiplying the expansions and collecting the terms of order \(r^{2-n}\)
therefore gives
\begin{equation}
 \sqrt{\det g}\,
 \frac{g^{-1}(p+D\widehat u)}
 {\sqrt{1+|p+D\widehat u|_g^2}}
 =\eta_0
 \left(p+A^{-1}Dw_0+\mathfrak c_p f_0p\right)
 +o_{1,\alpha}(r^{2-n}).
 \label{eq:minimal-flux-expansion}
\end{equation}
Since \(u\) solves the minimal graph equation, the coordinate divergence
of the left-hand side of \eqref{eq:minimal-flux-expansion} vanishes.
The divergence of the leading expression on the right is
homogeneous of degree \(1-n\), and the remainder is \(o(r^{1-n})\).
Hence
\begin{equation}
 \operatorname{div}_{\delta}
 \bigl(A^{-1}Dw_0+\mathfrak c_p f_0p\bigr)=0
 \qquad\text{on }\mathbb R^n\setminus\{0\}.
 \label{eq:4.3}
\end{equation}

Set
\[
 \widehat g:=
 \left(\frac{\eta}{\eta_0}\right)^{2/(n-1)}
 (g+du\otimes du).
\]
Then \(\widehat g\to A\) as \(|x|\to\infty\) on the distinguished end, and
\(\widetilde g=\eta_0^{2/(n-1)}\widehat g\).  We first compute the mass
of \(\widehat g\) relative to \(A\). Expanding
\(\eta=(1+|p+D\widehat u|_g^2)^{-1/2}\) gives
\begin{equation}
 \frac{\eta}{\eta_0}-1
 =\frac\kappa2f_0-\eta_0^2p\cdot Dw_0
 +o_{1,\alpha}(r^{2-n}),
 \label{eq:4.4}
\end{equation}
so
\begin{equation}
 \begin{aligned}
 \widehat g-A
 ={}& f_0\delta+p\otimes Dw_0+Dw_0\otimes p\\
 &+\frac2{n-1}
 \left(\frac\kappa2f_0-\eta_0^2p\cdot Dw_0\right)A
 +o_{1,\alpha}(r^{2-n}).
 \end{aligned}
\label{eq:4.5}
\end{equation}
Choose the first coordinate axis in the \(p\)-direction, so that
\(p=|p|e_1\). Then
\[
 A=\operatorname{diag}(1+|p|^2,1,\ldots,1),\qquad
 A^{-1}=\operatorname{diag}(1-\kappa,1,\ldots,1).
\]
Introduce
\(y:=A^{1/2}x\), \(R:=|y|\), and \(\theta:=y/R\).
This change of coordinates normalizes \(A\) to \(\delta\).
In the following mass calculations, \(S_R:=\{|y|=R\}\), and
\(\nu\) and \(dA\) denote its Euclidean outward unit normal and area measure.

\medskip
\noindent\textbf{Step 2. Remove the terms with zero mass contribution.}
We show that the remainder and the cross term do not affect the ADM mass.
It is easy to see that the remainder \(o_{1,\alpha}(r^{2-n})\)
makes no contribution to the ADM mass.

The sum \(p\otimes Dw_0+Dw_0\otimes p\) also makes no contribution
to the ADM mass. Indeed, in the original \(x\)-coordinates,

\begin{equation}
 p\otimes Dw_0+Dw_0\otimes p
 =\mathcal L_{V_0}A,\qquad
 V_0:=w_0A^{-1}p.
 \label{eq:pure-gauge-term}
\end{equation}
In the normalized \(y\)-coordinates, still writing \(V_0\) for the
transformed vector field, this term becomes \(\mathcal L_{V_0}\delta\).
Its ADM boundary integral vanishes by the divergence theorem:
\[
 \int_{S_R}(\Delta V_0
 -\nabla\operatorname{div}V_0)\cdot\nu\,dA=0.
\]
Here all operators are Euclidean in \(y\), and we extend \(V_0\)
smoothly into the ball without changing it near \(S_R\), using
\(\operatorname{div}(\Delta V_0-\nabla\operatorname{div}V_0)=0\).

\medskip
\noindent\textbf{Step 3. Compute the two remaining contributions.}
We compute the contribution of \(f_0\delta\), then use the equation for
\(w_0\) to evaluate the contribution of the scalar multiple of \(A\).
For these calculations, put
\[
 \rho^2:=\theta\cdot A^{-1}\theta=1-\kappa\theta_1^2,\qquad
 J_0:=\int_{S^{n-1}}\rho^{-n}\,d\theta,\qquad
 J_2:=\int_{S^{n-1}}\theta_1^2\rho^{-n}\,d\theta.
\]
The identity
\[
 \int_{S^{n-1}}(\theta\cdot B\theta)^{-n/2}\,d\theta
 =\omega_{n-1}(\det B)^{-1/2}
\]
holds for every positive definite matrix \(B\).  Taking
\(B=A^{-1}\) and using \(\det(A^{-1})=\eta_0^2\) gives
\begin{equation}
 J_0=\eta_0^{-1}\omega_{n-1}.
 \label{eq:4.6a}
\end{equation}
In the \(y\)-coordinates,
\begin{equation}
 f_0=\frac{2m}{n-2}R^{2-n}\rho^{2-n},\qquad
 \int_{S^{n-1}}\rho^{2-n}\,d\theta=J_0-\kappa J_2.
 \label{eq:ellipsoidal-identities}
\end{equation}
Here and below, the same symbols \(f_0\) and \(w_0\) denote the
functions expressed in the \(y\)-coordinates. In particular,
\(r=R\rho\).

We now compute the ADM mass contribution of \(f_0\delta\) in
\eqref{eq:4.5}. This term becomes \(f_0A^{-1}\) in the \(y\)-coordinates,
and direct differentiation in the ADM formula gives
\[
 \frac{m}{(n-1)\omega_{n-1}}
 \left\{\operatorname{tr}(A^{-1})
 \int_{S^{n-1}}\rho^{2-n}d\theta
 -\int_{S^{n-1}}\rho^{-n}
 (\theta\cdot A^{-2}\theta)d\theta\right\}.
\]
Using \(\operatorname{tr}(A^{-1})=n-\kappa\),
\(\theta\cdot A^{-2}\theta=1-(2\kappa-\kappa^2)\theta_1^2\), and
\eqref{eq:ellipsoidal-identities}, this contribution simplifies to
\begin{equation}
 \frac{m}{(n-1)\omega_{n-1}}
 \left[(n-1-\kappa)J_0-\kappa(n-2)J_2\right].
\label{eq:4.6b}
\end{equation}

Since \(y=A^{1/2}x\), one has
\(p\cdot D_x=(A^{1/2}p)\cdot D_y\).  Set
\[
 \bar p:=A^{1/2}p,\qquad
 z_0:=\bar p\cdot D_yw_0=p\cdot D_xw_0.
\]

It remains to compute the ADM mass contribution of
\(\frac1{n-1}(\kappa f_0-2\eta_0^2z_0)\delta\), the scalar term in
\eqref{eq:4.5} expressed in the normalized \(y\)-coordinates.
For this calculation, we first determine the spherical mean of \(z_0\).
Here \(D_y\) and \(\Delta_y\) denote the Euclidean gradient and
Laplacian in the normalized \(y\)-coordinates.  Equation \eqref{eq:4.3}
becomes \(\Delta_yw_0=-\mathfrak c_p\bar p\cdot D_yf_0\).  Set
\(\varphi(\theta):=w_0(1,\theta)\).  By homogeneity,
\(w_0(R,\theta)=R^{3-n}\varphi(\theta)\).
Homogeneity and integration by parts give, with \(z_0\) evaluated at \(R=1\),
\[
 \Delta_yw_0
 =R^{1-n}\bigl(
 \Delta_{S^{n-1}}\varphi-(n-3)\varphi\bigr),
 \qquad
 \int_{S^{n-1}}z_0\,d\theta
 =2\int_{S^{n-1}}(\bar p\cdot\theta)\varphi(\theta)\,d\theta.
\]
Multiply the spherical equation by \(\bar p\cdot\theta\) and integrate
over \(S^{n-1}\). Since
\(\Delta_{S^{n-1}}(\bar p\cdot\theta)
=-(n-1)(\bar p\cdot\theta)\), integration by parts on the sphere shows
that the integrated left-hand side is
\(-(n-2)\int_{S^{n-1}}z_0\,d\theta\).  On the other hand, evaluating at
\(R=1\),
\[
 D_yf_0=-2m\rho^{-n}A^{-1}\theta,
 \qquad
 \int_{S^{n-1}}(\bar p\cdot\theta)\,\bar p\cdot D_yf_0\,d\theta
 =-2m|p|^2J_2,
\]
because \(A^{-1}\bar p=(1-\kappa)\bar p\) and
\((1-\kappa)|\bar p|^2=|p|^2\). Multiplying the last integral by
\(-\mathfrak c_p\) and equating the two sides therefore yields
\begin{equation}
 \int_{S^{n-1}}z_0\,d\theta
 =-\frac{2\mathfrak c_p m|p|^2}{n-2}J_2.
 \label{eq:z0-average}
\end{equation}
Since \(z_0\) is homogeneous of degree \(2-n\),
\begin{equation}
 \int_{S_R}\partial_Rz_0\,dA
 =2\mathfrak c_p m|p|^2J_2.
 \label{eq:z0-flux}
\end{equation}
These formulas include \(n=3\).

We now compute the ADM mass contribution of the scalar multiple of \(A\)
in \eqref{eq:4.5}. In the normalized \(y\)-coordinates, this term is
\[
 \sigma\delta,\qquad
 \sigma:=\frac1{n-1}
 \left(\kappa f_0-2\eta_0^2z_0\right).
\]
For the scalar perturbation \(\sigma\delta\), the linear ADM
contribution is
\[
 -\frac1{2\omega_{n-1}}
 \lim_{R\to\infty}\int_{S_R}\partial_R\sigma\,dA.
\]
By \eqref{eq:ellipsoidal-identities},
\[
 \int_{S_R}\partial_Rf_0\,dA
 =-2m\int_{S^{n-1}}\rho^{2-n}\,d\theta
 =-2m(J_0-\kappa J_2).
\]
Using \eqref{eq:z0-flux}, \(\eta_0^2|p|^2=\kappa\), and
\(2\mathfrak c_p=n-2+\kappa\), we obtain the mass contribution
\begin{equation}
 \frac{m\kappa}{(n-1)\omega_{n-1}}
 \left[J_0+(n-2)J_2\right].
 \label{eq:4.6c}
\end{equation}

\medskip
\noindent\textbf{Step 4. Combine the contributions and restore the scaling.}
We first obtain the mass of \(\widehat g\), then account for the constant
factor relating \(\widetilde g\) to \(\widehat g\).
Adding \eqref{eq:4.6b} and \eqref{eq:4.6c} cancels the \(J_2\)-terms.
Equation~\eqref{eq:4.6a} then yields
\begin{equation}
 m(\widehat g)=\eta_0^{-1}m.
 \label{eq:4.6}
\end{equation}

Finally, \(\widetilde g=\eta_0^{2/(n-1)}\widehat g\).  If a metric and
its constant limiting background are both multiplied by \(c>0\), then
the ADM mass relative to the scaled background is
\(c^{(n-2)/2}\) times the original mass.
Hence
\[
 \begin{aligned}
 m(\widetilde g)
 &=\eta_0^{(n-2)/(n-1)}\eta_0^{-1}m\\
 &=\eta_0^{-1/(n-1)}m
 =(1+|p|^2)^{1/[2(n-1)]}m.
 \end{aligned}
\]
\end{proof}

For the remainder of the proof, fix the slope
\(\lambda:=\lambda_0(n)/2\).  For the standard orthonormal basis
\(\{e_a\}_{a=1}^n\) of the chosen AF coordinates, let
\(\widetilde g_a\) be the tilted metric associated with \(u_{\lambda e_a}\).
Then the tilted mass formula gives

\begin{equation}
 \sum_{a=1}^n m(\widetilde g_a)
 =n(1+\lambda^2)^{1/[2(n-1)]}m(g).
\label{eq:4.8}
\end{equation}

\section{The scalar conformal potential}\label{sec:scalar-potential}

In this section, we prove the mass--energy estimate
\eqref{eq:scalar-energy}. We first construct a solution \(w\) of the
conformal Laplacian equation with prescribed limits on all ends, and
then establish the existence of its normal derivative limits and the
conformal mass formula in Proposition~\ref{prop:conformal-mass}.
Applying this formula and the positive mass inequality to positive
affine combinations of \(w\) and \(1\) gives the energy bound in
Proposition~\ref{prop:scalar-energy}. This use of the positive mass
theorem to control the energy of the conformal factor is reminiscent
of Bray's argument in his proof of the Riemannian Penrose inequality
\cite{Bray2001}.

Let \((N^n,\gamma)\) be a smooth complete asymptotically flat manifold
with finitely many ends, as in Section~\ref{sec:preliminaries}, and
distinguish one end \(E_0\).  Assume
\[
 R_\gamma\geq0,\qquad R_\gamma\in L^1(N).
\]
Set
\begin{equation}
 a_n:=\frac{4(n-1)}{n-2},\qquad
 \kappa_n:=a_n^{-1}=\frac{n-2}{4(n-1)},\qquad
 L_\gamma:=-a_n\Delta_\gamma+R_\gamma,
 \label{eq:conformal-laplacian}
\end{equation}
where \(\Delta_\gamma:=\operatorname{div}_\gamma\nabla\).
Choose an AF decay rate \(\tau>(n-2)/2\) valid on every end, decreasing
the individual rates if necessary, and fix
\begin{equation}
 \frac{n-2}{2}<\sigma<\min\{\tau,n-2\}.
 \label{eq:scalar-weight}
\end{equation}

\begin{lemma}\label{lem:scalar-potential}
There is a unique smooth function \(w\) satisfying
\begin{equation}
 L_\gamma w=0,\qquad
 w\longrightarrow1\text{ on }E_0,\qquad
 w\longrightarrow0\text{ on every auxiliary end}.
 \label{eq:scalar-potential}
\end{equation}
It satisfies \(0<w\leq1\), and
\begin{equation}
 w-1=O_{2,\alpha}(r^{-\sigma})\text{ on }E_0,\qquad
 w=O_{2,\alpha}(r^{-\sigma})\text{ on every auxiliary end}.
 \label{eq:scalar-potential-decay}
\end{equation}
\end{lemma}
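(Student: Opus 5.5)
We look for \(w\) in the form \(w=\chi+v\), where \(\chi\) is a smooth function on \(N\) equal to \(1\) on \(E_0\cap\{r\geq2R_0\}\) and equal to \(0\) on the compact core and on every auxiliary end. Then \(w\) solves \eqref{eq:scalar-potential} if and only if
\[
 L_\gamma v=-L_\gamma\chi=a_n\Delta_\gamma\chi-R_\gamma\chi=:f.
\]
Here \(\Delta_\gamma\chi\) is compactly supported. The term \(R_\gamma\chi\) equals \(R_\gamma\) far out on \(E_0\). Asymptotic flatness gives \(R_\gamma=O_{0,\alpha}(r^{-\tau-2})\), so \(f\in C^{0,\alpha}_{-\tau-2}\).

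The key input is that \(L_\gamma:C^{2,\alpha}_{-\sigma}\to C^{0,\alpha}_{-\sigma-2}\) is an isomorphism for \(0<\sigma<n-2\). This is a weighted Fredholm statement. Since \(\sigma\) is a nonexceptional weight, \(L_\gamma\) is Fredholm of index zero, being a compact perturbation of \(-a_n\Delta_\delta\) at infinity. So it suffices to prove injectivity. Suppose \(L_\gamma v=0\) with \(v=O(r^{-\sigma})\). Then \(\sigma>(n-2)/2\) gives \(v\in W^{1,2}\), with boundary terms vanishing since \(r^{n-1}\cdot r^{-2\sigma-1}\to0\). Integrating \(vL_\gamma v\) yields
\[
 \int_N\bigl(a_n|\nabla v|^2+R_\gamma v^2\bigr)\,dV_\gamma=0.
\]
Because \(R_\gamma\geq0\), this forces \(\nabla v\equiv0\), so \(v\) is constant, and decay gives \(v=0\). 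We would therefore solve \(v\in C^{2,\alpha}_{-\sigma}\) using \(\sigma<\tau\), which guarantees \(f\in C^{0,\alpha}_{-\sigma-2}\). The decay \eqref{eq:scalar-potential-decay} follows, and smoothness comes from local elliptic regularity.

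An alternative route, if one prefers to avoid citing the weighted theory, is an exhaustion argument. On \(\Omega_T\) one solves \(L_\gamma w_T=0\), with \(w_T=1\) on the sphere of \(E_0\) and \(w_T=0\) on the other spheres. This Dirichlet problem is uniquely solvable because \(R_\gamma\geq0\). The maximum principle gives \(0\leq w_T\leq1\), and a subsequence converges locally. The limits at the ends are then fixed by barriers of the form \(1\pm Cr^{-\sigma}\) near infinity; these need \(R_\gamma\) to be small relative to \(r^{-2}\), and a superharmonic correction handles that. We regard this barrier step as the main technical obstacle along that route, so the Fredholm argument is preferable.

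For the bounds we argue as follows. Since \(R_\gamma\geq0\), \(w\) is a supersolution of \(-a_n\Delta_\gamma w+R_\gamma w\geq0\) wherever \(w<0\). As \(w\to0\) or \(1\) at the ends, the weak minimum principle on large \(\Omega_T\) gives \(w\geq-\epsilon\) for every \(\epsilon\), hence \(w\geq0\). Symmetrically, on the set where \(w>1\) we have \(L_\gamma(w-1)=-R_\gamma\leq0\), so \(w-1\) is a subsolution there, and the maximum principle gives \(w\leq1\). The strong maximum principle for \(-a_n\Delta+R_\gamma\), which has nonnegative zeroth-order coefficient, shows that \(w>0\) everywhere: otherwise \(w\) would attain an interior minimum \(0\) and hence vanish identically, contradicting \(w\to1\) on \(E_0\). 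Uniqueness follows from the same energy identity applied to the difference of two solutions, which decays on all ends.
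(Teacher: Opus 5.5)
Your existence argument and your proof of \(0<w\leq1\) match the paper's: you write \(w=\chi+v\), invert \(L_\gamma:C^{2,\alpha}_{-\sigma}\to C^{0,\alpha}_{-\sigma-2}\) using index zero plus the energy identity for the kernel, and then apply the weak and strong maximum principles on the exhaustion. One small inaccuracy: \(v=O(r^{-\sigma})\) need not lie in \(L^2\) when \(\sigma\leq n/2\), so \(v\in W^{1,2}\) is not right. This is harmless, because the identity only uses the exhaustion and the vanishing boundary term \(O(r^{n-2-2\sigma})\).

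The genuine gap is uniqueness. The lemma asserts uniqueness among all smooth solutions with \(w\to1\) on \(E_0\) and \(w\to0\) on the auxiliary ends, with no decay rate. For the difference \(d\) of two such solutions you only know \(d\to0\). Scaled interior estimates give \(|\nabla d|\leq Cr^{-1}\sup|d|\) on annuli, so the boundary term \(\int_{S_r}d\,\partial_\nu d\,dA\) is bounded only by \(Cr^{n-2}\bigl(\sup_{\{r/2\leq|x|\leq2r\}}|d|\bigr)^2\). This need not tend to zero unless \(d=o(r^{-(n-2)/2})\). As written, the energy identity therefore proves uniqueness only within \(C^{2,\alpha}_{-\sigma}\), which is exactly the kernel statement.

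The paper instead uses the maximum principle, and you already have it available. Since \(L_\gamma d=0\) with zeroth-order coefficient \(R_\gamma\geq0\), and \(|d|\leq\epsilon\) on \(\partial\Omega_T\) for large \(T\), the weak maximum principle gives \(|d|\leq\epsilon\) on \(\Omega_T\). Hence \(d\equiv0\). You could alternatively first prove that any solution tending to zero decays like \(r^{2-n}\), but that takes more work.
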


\begin{proof}
Let \(\chi\) be smooth, equal to one on a tail of \(E_0\), and equal
to zero on the tails of all auxiliary ends.  The AF estimates give
\(R_\gamma=O_{0,\alpha}(r^{-\tau-2})\), so
\(L_\gamma\chi\in C^{0,\alpha}_{-\sigma-2}(N)\).
The Fredholm theory for elliptic operators on AF manifolds gives an
operator of index zero
\[
 L_\gamma:C^{2,\alpha}_{-\sigma}(N)
 \longrightarrow C^{0,\alpha}_{-\sigma-2}(N)
\]
for \(0<\sigma<n-2\), by
\cite{CantorBrill1981,Bartnik1986,LockhartMcOwen1985}.
Its kernel is trivial.  Indeed, if \(L_\gamma v=0\) and
\(v\in C^{2,\alpha}_{-\sigma}(N)\), integration by parts on an
exhaustion gives
\[
 a_n\int_N|dv|_\gamma^2\,dV_\gamma
 +\int_N R_\gamma v^2\,dV_\gamma=0.
\]
The boundary terms vanish because, on each end,
\[
 \int_{S_r}|v|\,|dv|_\gamma\,dA_\gamma
 =O(r^{n-2-2\sigma})=o(1).
\]
Thus \(v\) is constant, and its decay gives \(v=0\).  The operator is
therefore invertible, and
\[
 w:=\chi-L_\gamma^{-1}(L_\gamma\chi)
\]
has the asserted equation and decay.

The maximum principle, applied on large compact exhaustions, gives
\(w\geq0\).  Since \(L_\gamma(1-w)=R_\gamma\geq0\) and \(1-w\)
has nonnegative limits at every end, it also gives \(w\leq1\).
The strong maximum principle and the limit on \(E_0\) imply \(w>0\).
Finally, the difference of two solutions with the prescribed end limits
tends to zero at every end, so the maximum principle proves uniqueness.
\end{proof}

\subsection{Flux and conformal change of mass}

For a function \(u\) on an end \(E\), define its outward flux,
when the limit exists, by
\begin{equation}
 \mathcal F_E(u):=
 \lim_{r\to\infty}\int_{S_r(E)}\partial_{\nu_\gamma}u\,dA_\gamma.
 \label{eq:scalar-flux}
\end{equation}
Here \(\nu_\gamma\) is the outward \(\gamma\)-unit normal to the
coordinate sphere. If \(u=c+O_{2,\alpha}(r^{-\sigma})\) on \(E\)
for a constant \(c\), then \(\sigma+\tau>n-2\) gives
\[
 \int_{S_r}\partial_{\nu_\gamma}u\,dA_\gamma
 -\int_{S_r}\partial_{\nu_\delta}u\,dA_\delta
 =O(r^{n-2-\sigma-\tau})=o(1).
\]
Thus replacing \(\nu_\gamma\) and \(dA_\gamma\) by their Euclidean
counterparts does not change the existence or value of the limit
in \eqref{eq:scalar-flux}.

\begin{proposition}\label{prop:conformal-mass}
For the function \(w\) in Lemma~\ref{lem:scalar-potential},
\(\mathcal F_E(w)\) exists as a finite real number on every end \(E\).
More generally, let \(u\) be a smooth function on \(N\).
Suppose \(u>0\), \(u=1+O_{2,\alpha}(r^{-\sigma})\) on \(E_0\),
and the flux in \eqref{eq:scalar-flux} exists.  Then
\begin{equation}
 m_{E_0}(u^{4/(n-2)}\gamma)
 =m_{E_0}(\gamma)-\frac{2}{(n-2)\omega_{n-1}}\mathcal F_{E_0}(u).
 \label{eq:conformal-mass}
\end{equation}
\end{proposition}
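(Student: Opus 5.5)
The proof has two parts. First we show the fluxes of \(w\) exist, using the equation \(L_\gamma w=0\). Then we obtain \eqref{eq:conformal-mass} by expanding the conformal metric on \(E_0\) and reading off the ADM integrand term by term.

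\emph{Existence of \(\mathcal F_E(w)\).} By \eqref{eq:scalar-potential}, \(\Delta_\gamma w=\kappa_nR_\gamma w\). Fix an end \(E\) and radii \(r_1<r_2\) in its coordinate chart. The divergence theorem on the annulus \(\{r_1\leq r\leq r_2\}\subset E\) gives
\[
 \int_{S_{r_2}}\partial_{\nu_\gamma}w\,dA_\gamma-\int_{S_{r_1}}\partial_{\nu_\gamma}w\,dA_\gamma
 =\kappa_n\int_{\{r_1\leq r\leq r_2\}}R_\gamma w\,dV_\gamma .
\]
Lemma~\ref{lem:scalar-potential} gives \(0<w\leq1\), and we assume \(R_\gamma\in L^1(N)\). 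Hence the right-hand side is bounded by \(\kappa_n\int_{\{r\geq r_1\}}|R_\gamma|\,dV_\gamma\), which tends to zero as \(r_1\to\infty\). The sphere fluxes are therefore Cauchy in \(r\), so \(\mathcal F_E(w)\) exists and is finite. By the remark preceding the proposition, the limit is the same whether Euclidean or \(\gamma\)-normals and areas are used, because \(w\) has the decay \eqref{eq:scalar-potential-decay}.

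\emph{The mass formula.} Set \(\gamma':=u^{4/(n-2)}\gamma\) and \(\psi:=u^{4/(n-2)}-1\). Taylor expansion in \(u-1=O_{2,\alpha}(r^{-\sigma})\) gives
\[
 \psi=\tfrac{4}{n-2}(u-1)+O_{2,\alpha}(r^{-2\sigma}),
\]
and hence
\[
 \gamma'_{ij}-\delta_{ij}=(\gamma_{ij}-\delta_{ij})+\tfrac{4}{n-2}(u-1)\delta_{ij}+E_{ij},
 \qquad
 E_{ij}=\psi(\gamma_{ij}-\delta_{ij})+O_{2,\alpha}(r^{-2\sigma}).
\]
Since \(\gamma-\delta=O_{2,\alpha}(r^{-\tau})\), we have \(DE=O(r^{-1-\sigma-\tau})+O(r^{-1-2\sigma})\). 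Integrating over \(S_r\), whose area is of order \(r^{n-1}\), gives a contribution \(O(r^{n-2-\sigma-\tau}+r^{n-2-2\sigma})\). This is \(o(1)\) because \(\sigma+\tau>n-2\) and \(2\sigma>n-2\) by \eqref{eq:scalar-weight}. The same decay also shows that \(\gamma'\) is AF on \(E_0\), with rate \(\min\{\tau,\sigma\}>(n-2)/2\).

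The first term reproduces the mass integral of \(\gamma\), whose limit exists by hypothesis. For the scalar term \(\phi\delta\) with \(\phi:=\tfrac4{n-2}(u-1)\), the integrand in \eqref{eq:adm-mass} is
\[
 (\partial_j\phi\,\delta_{ij}-n\,\partial_i\phi)\nu^i=-(n-1)\partial_r\phi .
\]
Its contribution is therefore
\[
 \frac{1}{2(n-1)\omega_{n-1}}\cdot\bigl(-(n-1)\bigr)\cdot\frac4{n-2}
 \int_{S_r}\partial_r u\,dA_\delta .
\]
By the remark after \eqref{eq:scalar-flux}, this integral converges to \(\mathcal F_{E_0}(u)\). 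The coefficient simplifies to \(-2/((n-2)\omega_{n-1})\). Adding the three contributions shows that the ADM limit for \(\gamma'\) exists and equals the right-hand side of \eqref{eq:conformal-mass}.

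The only delicate point is the error bookkeeping: checking that every cross or quadratic term decays strictly faster than \(r^{1-n}\) after one derivative. This holds exactly because of the two inequalities \(\sigma+\tau>n-2\) and \(2\sigma>n-2\) built into \eqref{eq:scalar-weight}. A secondary point is that \(\mathcal F_{E_0}(u)\) is assumed only to exist, not to converge at a rate. So the argument must pass to the limit only in the three separate pieces, never using pointwise asymptotics of \(\partial_r u\) beyond \(O(r^{-\sigma-1})\).
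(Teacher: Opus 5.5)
Your proposal is correct and follows the paper's proof essentially step for step. The flux of \(w\) is handled by the same Cauchy argument, combining the divergence theorem on end annuli, \(0<w\leq1\), and \(R_\gamma\in L^1\). The mass formula uses the same ADM bookkeeping, with errors \(O(r^{n-2-\sigma-\tau})\) and \(O(r^{n-2-2\sigma})\) controlled by \eqref{eq:scalar-weight}.

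The only cosmetic difference is where the expansion happens. You split \(u^{4/(n-2)}\gamma-\delta\) linearly before differentiating. The paper instead expands the ADM integrand of \(u^{4/(n-2)}\gamma\) by the product rule and then replaces \(u^p\), \(u^{p-1}\), and \(\gamma\) by their limits.
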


\begin{proof}
Since \(\Delta_\gamma w=\kappa_nR_\gamma w\) and \(0<w\leq1\),
the divergence theorem on end annuli gives
\[
 \left|\int_{S_{r_2}(E)}\partial_{\nu_\gamma}w\,dA_\gamma
 -\int_{S_{r_1}(E)}\partial_{\nu_\gamma}w\,dA_\gamma\right|
 \leq\kappa_n\int_{E\cap\{r>r_1\}}R_\gamma\,dV_\gamma
 \longrightarrow0
\]
as \(r_1\to\infty\), uniformly for \(r_2>r_1\).
Thus \(\mathcal F_E(w)\) exists on every end.

To prove the mass formula for \(u\), write \(p:=4/(n-2)\).
The ADM integrand of
\(\widehat\gamma_{ij}:=u^p\gamma_{ij}\) is
\[
 u^p(\partial_j\gamma_{ij}-\partial_i\gamma_{jj})
 +p u^{p-1}\bigl((\partial_j u)\gamma_{ij}
                   -(\partial_i u)\gamma_{jj}\bigr).
\]
Replacing \(u^p\) by one in the first term and \(\gamma\) by
\(\delta\) in the second gives integrated errors of order
\(O(r^{n-2-\sigma-\tau})=o(1)\).  Replacing \(u^{p-1}\) by one
in the remaining term gives an error
\(O(r^{n-2-2\sigma})=o(1)\).  The conformal contribution that remains
is \(-p(n-1)\partial_{\nu_\delta}u\).  The ADM normalization in
\eqref{eq:adm-mass} now gives \eqref{eq:conformal-mass}.
\end{proof}

\subsection{The mass and energy estimate}

We now combine Proposition~\ref{prop:conformal-mass} with the positive mass
inequality to bound the energy of \(w\).

\begin{proposition}\label{prop:scalar-energy}
Let \(w\) be the function in Lemma~\ref{lem:scalar-potential}, and
write \(\mu:=m_{E_0}(\gamma)\).  The positive mass inequality gives
\begin{equation}
 \int_N\bigl(|dw|_\gamma^2+\kappa_nR_\gamma w^2\bigr)\,dV_\gamma
 =\mathcal F_{E_0}(w)
 \leq\frac{(n-2)\omega_{n-1}}2\mu.
 \label{eq:scalar-energy}
\end{equation}
\end{proposition}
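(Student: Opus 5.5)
The plan has two parts: an energy identity from integrating by parts, and an upper bound for the flux from the positive mass inequality applied to conformal changes by affine combinations of \(w\) and \(1\).

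\textbf{Energy identity.} Since \(L_\gamma w=0\), we have \(\Delta_\gamma w=\kappa_nR_\gamma w\). Multiply by \(w\) and integrate over the truncated region \(N_r\) bounded by the coordinate spheres \(S_r\) of all ends:
\[
 \int_{N_r}\bigl(|dw|_\gamma^2+\kappa_nR_\gamma w^2\bigr)\,dV_\gamma
 =\sum_E\int_{S_r(E)}w\,\partial_{\nu_\gamma}w\,dA_\gamma .
\]
By \eqref{eq:scalar-potential-decay}, on an auxiliary end \(w=O(r^{-\sigma})\) and \(|dw|=O(r^{-\sigma-1})\). The boundary term there is therefore \(O(r^{n-2-2\sigma})\to0\), because \(\sigma>(n-2)/2\). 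On \(E_0\), write \(w=1+(w-1)\). The part with \(w-1\) is again \(O(r^{n-2-2\sigma})\to0\), and the part with \(1\) converges to \(\mathcal F_{E_0}(w)\), which exists by Proposition~\ref{prop:conformal-mass}. The integrand on the left is nonnegative because \(R_\gamma\ge0\), so monotone convergence gives the equality in \eqref{eq:scalar-energy}. In particular \(\mathcal F_{E_0}(w)\ge0\).

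\textbf{Flux bound.} For \(t\in[0,1)\), set \(u_t:=(1-t)+tw\). This function is smooth, and \(u_t\ge 1-t>0\) because \(w>0\). It satisfies \(L_\gamma u_t=(1-t)R_\gamma\ge0\), tends to \(1\) on \(E_0\) with \(u_t-1=O_{2,\alpha}(r^{-\sigma})\), and tends to \(1-t>0\) on the auxiliary ends. The conformal metric \(\gamma_t:=u_t^{4/(n-2)}\gamma\) has scalar curvature
\[
 R_{\gamma_t}=u_t^{-(n+2)/(n-2)}L_\gamma u_t=(1-t)u_t^{-(n+2)/(n-2)}R_\gamma\ge0 .
\]
This curvature is integrable, since \(u_t\) is bounded above and below by positive constants and \(R_\gamma\in L^1\). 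Each auxiliary end remains AF after a constant rescaling of its coordinates, because \(u_t\to1-t\) there with the same decay. Completeness follows from the same two-sided bounds on \(u_t\). Theorem~\ref{thm:positive-mass} then gives \(m_{E_0}(\gamma_t)\ge0\). The flux satisfies \(\mathcal F_{E_0}(u_t)=t\,\mathcal F_{E_0}(w)\), so \eqref{eq:conformal-mass} yields
\[
 0\le\mu-\frac{2t}{(n-2)\omega_{n-1}}\mathcal F_{E_0}(w).
\]
Letting \(t\to1\) gives \(\mathcal F_{E_0}(w)\le\frac{(n-2)\omega_{n-1}}2\mu\).

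\textbf{Main obstacle.} The step needing most care is checking that \(\gamma_t\) satisfies the hypotheses of Theorem~\ref{thm:positive-mass} exactly as stated. Two points must be verified: the weighted decay of \(u_t\) on the auxiliary ends, and the fact that rescaling those ends by the constant \((1-t)^{2/(n-2)}\) preserves the AF condition \eqref{eq:af-definition}. We deliberately avoid \(t=1\). At \(t=1\) the conformal factor \(w\) degenerates on the auxiliary ends, those ends would be compactified, and Theorem~\ref{thm:positive-mass} could not be applied directly.
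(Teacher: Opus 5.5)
Your proposal is correct and follows the paper's proof: the same exhaustion argument gives the energy identity, and the paper applies the positive mass inequality to \(v_t=t+(1-t)w\), which is your \(u_{1-t}\), letting \(t\downarrow0\) just as you let \(t\to1\). Your checks that \(\gamma_t\) satisfies the hypotheses of Theorem~\ref{thm:positive-mass} (two-sided bounds on the conformal factor, constant dilation of the auxiliary ends, integrability of \(R_{\gamma_t}\)) are also the ones the paper makes.
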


\begin{proof}
Multiplying \(\Delta_\gamma w=\kappa_nR_\gamma w\) by \(w\) and
integrating over an exhaustion yields
\[
 \int_N\bigl(|dw|_\gamma^2+\kappa_nR_\gamma w^2\bigr)\,dV_\gamma
 =\lim_{r\to\infty}\sum_E
   \int_{S_r(E)}w\,\partial_{\nu_\gamma}w\,dA_\gamma.
\]
Every auxiliary end contributes
\(O(r^{n-2-2\sigma})=o(1)\).  On \(E_0\), replacing \(w\) by
one in the boundary integral has the same vanishing error.  This
proves the equality in \eqref{eq:scalar-energy}.

For \(0<t<1\), set
\[
 v_t:=t+(1-t)w,\qquad \gamma_t:=v_t^{4/(n-2)}\gamma.
\]
Since \(t\leq v_t\leq1\), the metric \(\gamma_t\) is complete.
It is AF at \(E_0\), where \(v_t\to1\), and at every auxiliary
end after a constant coordinate dilation, since \(v_t\to t>0\)
there.  Moreover,
\[
 L_\gamma v_t=tR_\gamma,
\]
so the conformal scalar curvature formula gives
\begin{equation}
 R_{\gamma_t}=t v_t^{-(n+2)/(n-2)}R_\gamma\geq0,
 \qquad
 R_{\gamma_t}\,dV_{\gamma_t}=t v_tR_\gamma\,dV_\gamma.
 \label{eq:affine-conformal-curvature}
\end{equation}
Thus the scalar curvature remains integrable, and the positive mass
inequality applies. By Proposition~\ref{prop:conformal-mass},
\(\mathcal F_{E_0}(v_t)=(1-t)\mathcal F_{E_0}(w)\) exists, so
Proposition~\ref{prop:conformal-mass} gives
\[
 0\leq m_{E_0}(\gamma_t)
 =\mu-\frac{2(1-t)}{(n-2)\omega_{n-1}}\mathcal F_{E_0}(w).
\]
Letting \(t\downarrow0\) proves the inequality in
\eqref{eq:scalar-energy}.
\end{proof}

\begin{remark}
The constant in \eqref{eq:scalar-energy} is sharp.  For the spatial
Schwarzschild manifold
\[
 N:=\mathbb R^n\setminus\{0\},\qquad
 \gamma:=\left(1+\frac{\mu}{2r^{n-2}}\right)^{4/(n-2)}\delta,
 \qquad\mu>0,
\]
with the end \(r\to\infty\) distinguished, the required potential is
\(w=(1+\mu/(2r^{n-2}))^{-1}\).  Its outward flux is
\((n-2)\omega_{n-1}\mu/2\), so equality holds.
\end{remark}

\subsection{Localized estimates}

Define
\begin{equation}
 q:=1-w^2.
 \label{eq:scalar-defect}
\end{equation}
Then \(0\leq q<1\), with limit zero on \(E_0\) and limit one on
every auxiliary end, and
\begin{equation}
 -\Delta_\gamma q=2|dw|_\gamma^2+2\kappa_nR_\gamma w^2\geq0.
 \label{eq:scalar-defect-equation}
\end{equation}
The square in \(1-w^2\) ensures that the flux of \(dq=-2w\,dw\)
vanishes on the auxiliary ends.  It therefore permits truncation by a
function that is constant there.

\begin{corollary}\label{cor:scalar-local}
For every \(0<\nu<1\),
\begin{align}
 \int_{\{q<\nu\}}|dq|_\gamma^2\,dV_\gamma
 &\leq(n-2)\omega_{n-1}\nu\mu,
 \label{eq:scalar-truncation}\\
 \int_{\{q<\nu\}}R_\gamma\,dV_\gamma
 &\leq\frac{2(n-1)\omega_{n-1}}{1-\nu}\mu.
 \label{eq:scalar-local-curvature}
\end{align}
\end{corollary}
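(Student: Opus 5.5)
The plan is to derive both inequalities directly from the energy identity \eqref{eq:scalar-energy} of Proposition~\ref{prop:scalar-energy}. For the first, integrate the superharmonicity \eqref{eq:scalar-defect-equation} against a truncation of \(q\). For the second, compare \(R_\gamma\) with \(R_\gamma w^2\) on the sublevel set.

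\textbf{Step 1 (gradient bound).} Fix \(0<\nu<1\) and set \(\varphi:=\min\{q,\nu\}\). This is a nonnegative, bounded, locally Lipschitz function with \(d\varphi=\mathbf 1_{\{q<\nu\}}\,dq\) almost everywhere. Multiply \(-\Delta_\gamma q\) by \(\varphi\) and integrate over a large exhaustion domain \(N_r\), bounded by the coordinate spheres \(S_r(E)\) of all ends. Integration by parts, which is valid for Lipschitz test functions, gives
\[
 \int_{N_r\cap\{q<\nu\}}|dq|_\gamma^2\,dV_\gamma
 =\int_{N_r}\varphi\,(-\Delta_\gamma q)\,dV_\gamma
 +\sum_E\int_{S_r(E)}\varphi\,\partial_{\nu_\gamma}q\,dA_\gamma .
\]
The boundary terms vanish in the limit; this is the one point needing care, and it is exactly where the choice \(q=1-w^2\) matters.
\begin{itemize}
\item On \(E_0\) we have \(\varphi\leq q=O(r^{-\sigma})\) and \(dq=-2w\,dw=O(r^{-\sigma-1})\) by \eqref{eq:scalar-potential-decay}, so the contribution is \(O(r^{n-2-2\sigma})=o(1)\) because \(\sigma>(n-2)/2\).
\item On each auxiliary end, \(q\to1>\nu\), so \(\varphi\equiv\nu\) on the tail. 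The contribution is \(-2\nu\int_{S_r}w\,\partial_{\nu_\gamma}w\,dA_\gamma\). Since \(w=O(r^{-\sigma})\), this is \(O(r^{n-2-2\sigma})=o(1)\), even though \(\mathcal F_E(w)\) itself need not vanish.
\end{itemize}
Letting \(r\to\infty\), and using \(0\leq\varphi\leq\nu\) together with \(-\Delta_\gamma q=2|dw|^2_\gamma+2\kappa_nR_\gamma w^2\geq0\), we obtain
\[
 \int_{\{q<\nu\}}|dq|_\gamma^2\,dV_\gamma
 \leq2\nu\int_N\bigl(|dw|_\gamma^2+\kappa_nR_\gamma w^2\bigr)\,dV_\gamma
 \leq(n-2)\omega_{n-1}\nu\mu .
\]
The last inequality is \eqref{eq:scalar-energy}, and monotone convergence justifies the passage to the full integral. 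This proves \eqref{eq:scalar-truncation}.

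\textbf{Step 2 (curvature bound).} On \(\{q<\nu\}\) we have \(w^2>1-\nu\). Since \(R_\gamma\geq0\), it follows that \(R_\gamma\leq R_\gamma w^2/(1-\nu)\) there. Dropping the nonnegative gradient term in \eqref{eq:scalar-energy} gives
\[
 \kappa_n\int_N R_\gamma w^2\,dV_\gamma\leq\frac{(n-2)\omega_{n-1}}{2}\mu .
\]
Dividing by \(\kappa_n=(n-2)/(4(n-1))\) bounds \(\int_N R_\gamma w^2\,dV_\gamma\) by \(2(n-1)\omega_{n-1}\mu\). Combining the two inequalities yields \eqref{eq:scalar-local-curvature}.

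The only genuine obstacle is the boundary analysis in Step~1. The flux of \(w\) on an auxiliary end can be nonzero, so truncating \(w\) itself would leave a nonvanishing boundary term. Working with \(q\) converts this flux into \(w\,\partial w\), which decays. If the Lipschitz test function causes any concern, replace \(\min\{q,\nu\}\) by a smooth nondecreasing approximation \(\psi_k(q)\) with \(0\leq\psi_k\leq\nu\) and \(\psi_k'\to\mathbf 1_{[0,\nu)}\), and pass to the limit by dominated convergence.
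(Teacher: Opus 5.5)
Your proposal is correct and follows the paper's proof essentially step for step. Both use the test function \(\min\{q,\nu\}\) on the truncated domain, kill the boundary terms via \(q,dq\) decay on \(E_0\) and \(dq=-2w\,dw\) on the auxiliary ends, and then bound \(\int_{\{q<\nu\}}R_\gamma\,dV_\gamma\) using \(w^2>1-\nu\) and the energy estimate \eqref{eq:scalar-energy}.
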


\begin{proof}
Multiply \eqref{eq:scalar-defect-equation} by the Lipschitz function
\(\min\{q,\nu\}\), integrate over the compact domain obtained by
truncating every end at \(S_r\), and integrate by parts. The identity
\(d\min\{q,\nu\}=\mathbf 1_{\{q<\nu\}}\,dq\) holds almost everywhere.
We then let \(r\to\infty\). On \(E_0\),
\(q=O(r^{-\sigma})\) and \(dq=O(r^{-\sigma-1})\), so the boundary
term is \(O(r^{n-2-2\sigma})=o(1)\).  On an auxiliary end the test
function eventually equals \(\nu\), while
\(dq=-2w\,dw=O(r^{-2\sigma-1})\), so its boundary term has the same
vanishing order.  It follows that
\begin{align*}
 \int_{\{q<\nu\}}|dq|_\gamma^2\,dV_\gamma
 &=2\int_N\min\{q,\nu\}
       \bigl(|dw|_\gamma^2+\kappa_nR_\gamma w^2\bigr)\,dV_\gamma\\
 &\leq2\nu\int_N
       \bigl(|dw|_\gamma^2+\kappa_nR_\gamma w^2\bigr)\,dV_\gamma.
\end{align*}
Equation~\eqref{eq:scalar-energy} proves
\eqref{eq:scalar-truncation}.  On \(\{q<\nu\}\) one has
\(w^2>1-\nu\), and hence
\[
 \kappa_n(1-\nu)\int_{\{q<\nu\}}R_\gamma\,dV_\gamma
 \leq\kappa_n\int_N R_\gamma w^2\,dV_\gamma
 \leq\frac{(n-2)\omega_{n-1}}2\mu.
\]
Substitution of \(\kappa_n=(n-2)/(4(n-1))\) gives
\eqref{eq:scalar-local-curvature}.
\end{proof}

\section{The combined coordinate defect}\label{sec:combined-defect}

In this section, we construct the coordinate map \(\Phi\) from the
minimal graph functions and define a nonnegative function \(Q\) that
combines the deviation of their normalized Gram matrix from \(I\) with
the scalar defects \(q_a=1-w_a^2\). Where \(Q\) is small, \(\Phi\) has
small metric distortion and the tilted metrics are uniformly comparable
to \(g\). On these regions, the scalar curvature identity and the
mass--energy estimates give integral bounds for the Hessians of the graph
functions. Together with the truncated estimates for \(dq_a\), these
bounds yield \eqref{eq:good-band-energy}, the main estimate of
Proposition~\ref{prop:good-band}. Since \(Q\) tends to zero on the
distinguished end and to \(2n\) on the auxiliary ends, this estimate
will allow us to select a level set of small area and extract an exterior
region with global coordinates in Section~\ref{sec:regular-exterior}.

Assume the hypotheses of Theorem~\ref{thm:quantitative}.  Use the
slope \(\lambda=\lambda_0(n)/2\) fixed above, write
\(m:=m(g)\geq0\), and set \(u_a:=u_{\lambda e_a}\) for
\(1\leq a\leq n\).  Recall that
\[
 \eta_a:=(1+|du_a|_g^2)^{-1/2},\qquad
 \bar g_a:=g+du_a\otimes du_a,\qquad
 \widetilde g_a:=\eta_a^{2/(n-1)}\bar g_a.
\]
Section~\ref{sec:graph-geometry} verifies that each \(\widetilde g_a\)
is complete and AF, with nonnegative integrable scalar curvature.
Normalize its limiting metric on each end by a linear change of
coordinates, and let \(w_a\) be the function given by
Lemma~\ref{lem:scalar-potential}, with the same distinguished end.
Define
\begin{equation}
\begin{aligned}
 \Phi:=\left(\frac{u_1}{\lambda},\ldots,
                   \frac{u_n}{\lambda}\right),\qquad
 &G_{ab}:=\lambda^{-2}\langle du_a,du_b\rangle_g,\\
 q_a:=1-w_a^2,\qquad
 &Q:=|G-I|^2+\sum_{a=1}^n q_a.
\end{aligned}
 \label{eq:combined-defect}
\end{equation}
Here \(|\cdot|\) is the Euclidean Hilbert--Schmidt matrix norm.
The graph expansions in Theorem~\ref{thm:affine-graph} give
\[
 \Phi(x)=x+o(r),\qquad d\Phi=\operatorname{Id}+o(1),\qquad
 G\longrightarrow I
\]
on the distinguished end.  Thus \(Q\to0\) there.  On every auxiliary
end, \(du_a\to0\) and \(w_a\to0\), so
\begin{equation}
 Q\longrightarrow |I|^2+n=2n.
 \label{eq:defect-auxiliary-limit}
\end{equation}
In particular, small sublevel sets of \(Q\) avoid the auxiliary ends.

\begin{proposition}\label{prop:good-band}
There are constants \(\epsilon_0(n)>0\) and \(C(n)<\infty\) such
that, for \(0<\epsilon<\epsilon_0(n)\),
\begin{equation}
 \int_{\{Q<6\epsilon\}}|dQ|_g^2\,dV_g\leq C(n)\epsilon m.
 \label{eq:good-band-energy}
\end{equation}
\end{proposition}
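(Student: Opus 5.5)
Since \(Q=|G-I|^2+\sum_a q_a\), on \(\{Q<6\epsilon\}\) we write
\[
 |dQ|_g^2\leq 2\bigl|d|G-I|^2\bigr|_g^2+2n\sum_a|dq_a|_g^2
\]
and bound the two pieces separately. The region \(\{Q<6\epsilon\}\) is contained in each \(\{q_a<6\epsilon\}\) and in \(\{|G-I|^2<6\epsilon\}\). On this set \(|G-I|\leq\sqrt{6\epsilon}\), so for \(\epsilon_0(n)\) small the \(du_a\) are nearly \(\lambda\)-orthonormal. Consequently \(|du_a|_g\leq2\lambda\), \(\eta_a\) is bounded above and below by dimensional constants, and \(\widetilde g_a\), \(\bar g_a\), \(g\) are uniformly comparable with constants depending only on \(n\). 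The first step is to record these pointwise comparisons.

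\emph{The \(q_a\) terms.} Apply Corollary~\ref{cor:scalar-local} to \((N,\gamma)=(M,\widetilde g_a)\) with \(\nu=6\epsilon\). This gives
\[
 \int_{\{q_a<6\epsilon\}}|dq_a|^2_{\widetilde g_a}\,dV_{\widetilde g_a}\leq C(n)\epsilon\, m(\widetilde g_a),
\]
and Proposition~\ref{prop:tilted-mass} (with \eqref{eq:4.8}) gives \(m(\widetilde g_a)\leq C(n)m\). The norm \(|\cdot|_{\widetilde g_a}^2\) scales as \(\eta_a^{-2/(n-1)}|\cdot|_{\bar g_a}^2\), and \(|\cdot|_{\bar g_a}\) is comparable to \(|\cdot|_g\) where \(|du_a|\) is bounded. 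The volume forms are related by \eqref{eq:3.6}. Together these convert the bound to one in \(g\) on \(\{Q<6\epsilon\}\), so this piece contributes \(C(n)\epsilon m\).

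\emph{The Gram term.} Here \(d|G-I|^2=2\langle G-I,dG\rangle\), so
\[
 \bigl|d|G-I|^2\bigr|^2\leq 4|G-I|^2|dG|^2\leq 24\epsilon|dG|^2
\]
on the sublevel set. Also \(|dG_{ab}|\leq\lambda^{-2}\bigl(|\nabla^2u_a||du_b|+|du_a||\nabla^2u_b|\bigr)\leq C(n)\sum_a|\nabla^2_gu_a|_g\). It therefore suffices to show
\[
 \int_{\{q_a<6\epsilon\}}|\nabla_g^2u_a|_g^2\,dV_g\leq C(n)m,
\]
taken over the part of that set where \(|du_a|\leq2\lambda\), which includes \(\{Q<6\epsilon\}\). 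For this, use \eqref{eq:tilted-density}: \(R_{\widetilde g_a}dV_{\widetilde g_a}\geq\eta_a^{-1/(n-1)}|h_a|^2_{\bar g_a}dV_g\) with \(h_a=\eta_a\nabla^2u_a\). The norm \(|\cdot|_{\bar g_a}\) is comparable to \(|\cdot|_g\) when \(|du_a|\leq2\lambda\), so \(|\nabla^2u_a|_g^2dV_g\leq C(n)R_{\widetilde g_a}dV_{\widetilde g_a}\) there. Then \eqref{eq:scalar-local-curvature} with \(\nu=6\epsilon<1/2\) bounds \(\int_{\{q_a<6\epsilon\}}R_{\widetilde g_a}dV_{\widetilde g_a}\) by \(C(n)m(\widetilde g_a)\leq C(n)m\). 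Multiplying by \(24\epsilon\), the Gram term also contributes \(C(n)\epsilon m\), and summing proves \eqref{eq:good-band-energy}.

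The main point needing care is the uniformity of all constants in \(n\) alone: the comparisons between \(g\), \(\bar g_a\) and \(\widetilde g_a\), and the scaling of \(|dq_a|^2dV\) and of \(|h|^2\), must be invoked only on \(\{Q<6\epsilon\}\), where \(|G-I|\) small controls \(|du_a|\). The globally \(M\)-dependent bound \(\inf\eta_a>0\) is not uniform and must not be used. A second technical point is that Lemma~\ref{lem:scalar-potential} and Corollary~\ref{cor:scalar-local} are applied to \(\widetilde g_a\) after the linear normalization of its ends. This normalization does not affect the intrinsic integrals, and the mass entering \eqref{eq:scalar-energy} is exactly the one computed in Proposition~\ref{prop:tilted-mass}.
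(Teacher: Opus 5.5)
Your proposal is correct and follows the paper's proof essentially step for step. Both arguments first compare $g$, $\bar g_a$ and $\widetilde g_a$ pointwise on $\{Q<6\epsilon\}$ using only $|G-I|<\sqrt{6\epsilon}$. Both then bound the Hessians through the density identity for $R_{\widetilde g_a}\,dV_{\widetilde g_a}$ and the localized curvature bound of Corollary~\ref{cor:scalar-local}, gain the factor $\epsilon$ in the Gram term from $|G-I|^2<6\epsilon$, and control the $q_a$ terms by the truncated energy estimate together with the tilted mass formula.
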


\begin{proof}
Write \(B_\epsilon:=\{Q<6\epsilon\}\).  On this set,
\[
 |G-I|<\sqrt{6\epsilon},\qquad |du_a|_g^2=\lambda^2G_{aa}.
\]
For sufficiently small \(\epsilon_0(n)\), these inequalities bound
\(|du_a|_g\) from above and \(\eta_a\) away from zero by dimensional
constants.  Consequently,
\begin{equation}
 C(n)^{-1}g\leq \widetilde g_a\leq \bar g_a\leq C(n)g
 \qquad\text{on }B_\epsilon.
 \label{eq:good-band-comparison}
\end{equation}
The constants are dimensional because \(\lambda\) has already been
fixed in terms of \(n\).

Since \(|G-I|^2\geq0\) and \(q_a=1-w_a^2\geq0\) for every \(a\),
the definition \eqref{eq:combined-defect} gives \(q_a\leq Q\).
Hence \(B_\epsilon\subset\{q_a<6\epsilon\}\) for every \(a\). Take
\(\epsilon_0<1/12\).  Equations~\eqref{eq:scalar-local-curvature}
and \eqref{eq:tilted-mass-formula} give
\begin{equation}
 \int_{B_\epsilon}R_{\widetilde g_a}\,dV_{\widetilde g_a}
 \leq C(n)m(\widetilde g_a)\leq C(n)m.
 \label{eq:good-band-scalar}
\end{equation}
On the other hand, \eqref{eq:3.5}--\eqref{eq:3.6} yield the
density identity
\begin{equation}
 R_{\widetilde g_a}\,dV_{\widetilde g_a}
 =\eta_a^{-1/(n-1)}
 \left(R_g+|\eta_a\nabla^2u_a|_{\bar g_a}^2
       +\frac n{n-1}|d\log\eta_a|_{\bar g_a}^2\right)dV_g.
 \label{eq:tilted-scalar-density}
\end{equation}
All three terms on the right are nonnegative.  The metric comparison
in \eqref{eq:good-band-comparison} and the lower bound for \(\eta_a\)
therefore imply
\begin{equation}
 \sum_{a=1}^n\int_{B_\epsilon}|\nabla^2u_a|_g^2\,dV_g
 \leq C(n)m.
 \label{eq:good-band-hessians}
\end{equation}

Differentiating \(G_{ab}=\lambda^{-2}\langle du_a,du_b\rangle_g\)
and using the gradient bounds gives
\[
 |dG|_g^2\leq C(n)\sum_{a=1}^n|\nabla^2u_a|_g^2
 \qquad\text{on }B_\epsilon.
\]
Since \(|G-I|^2<6\epsilon\) there,
\[
 \bigl|d(|G-I|^2)\bigr|_g^2
 \leq4|G-I|^2|dG|_g^2
 \leq C(n)\epsilon\sum_{a=1}^n|\nabla^2u_a|_g^2.
\]
After integration, \eqref{eq:good-band-hessians} gives
\begin{equation}
 \int_{B_\epsilon}\bigl|d(|G-I|^2)\bigr|_g^2\,dV_g
 \leq C(n)\epsilon m.
 \label{eq:good-band-gram-energy}
\end{equation}
The other terms are controlled by the truncated scalar estimate:
\begin{align}
 \int_{B_\epsilon}|dq_a|_g^2\,dV_g
 &\leq C(n)\int_{B_\epsilon}
        |dq_a|_{\widetilde g_a}^2\,dV_{\widetilde g_a}\notag\\
 &\leq C(n)\int_{\{q_a<6\epsilon\}}
        |dq_a|_{\widetilde g_a}^2\,dV_{\widetilde g_a}
 \leq C(n)\epsilon m,
 \label{eq:good-band-scalar-energy}
\end{align}
where we used \eqref{eq:good-band-comparison},
\eqref{eq:scalar-truncation}, and the tilted mass formula
\eqref{eq:tilted-mass-formula}.
Finally,
\[
 |dQ|_g^2\leq(n+1)\left(
      \bigl|d(|G-I|^2)\bigr|_g^2+\sum_{a=1}^n|dq_a|_g^2\right).
\]
Combining \eqref{eq:good-band-gram-energy} and
\eqref{eq:good-band-scalar-energy} proves
\eqref{eq:good-band-energy}.
\end{proof}

\begin{remark}
When \(M\) is spin, one can also use Witten spinors for the tilted
metrics and the Lichnerowicz identity to obtain an alternative control
of the coordinate defect
\cite{Witten1981,ParkerTaubes1982}.
Related spinorial estimates appear in
\cite{BrayFinster2002,FinsterKath2002,Finster2009}.
\end{remark}

\section{Extraction of the regular exterior}
\label{sec:regular-exterior}

Recall that \(\Phi=(u_1/\lambda,\ldots,u_n/\lambda)\) is the map
constructed above and \(Q=|G-I|^2+\sum_aq_a\) is its defect. Where
\(Q\) is small, \(\Phi\) is a local diffeomorphism with small metric
distortion. The scalar potentials ensure that this region avoids the
auxiliary ends. We use the energy bound for \(dQ\), an isoperimetric
estimate, and coarea to select a smooth level set \(\{Q=t\}\) of small
area as a cutting boundary. Its image under \(\Phi\) may self-intersect,
so the following lemma provides a smooth neighborhood with controlled
boundary area. The degree at infinity then yields global coordinates
on a smaller exterior region.

\begin{lemma}\label{lem:wall-smoothing}
Let \(n\geq2\), let \((\Sigma^{n-1},h)\) be a compact, nonempty,
smooth manifold without boundary, and let
\(F:(\Sigma,h)\to(\mathbb R^n,\delta)\) be a smooth immersion.
Set \(\Gamma:=F(\Sigma)\).  For every \(\varrho>0\), there is a
bounded open set \(\mathcal O_\varrho\) with smooth boundary such that
\[
 \Gamma\subset\mathcal O_\varrho
 \Subset\{x:\operatorname{dist}_\delta(x,\Gamma)<\varrho\}
\]
and
\begin{equation}
 \mathcal H_\delta^{n-1}(\partial\mathcal O_\varrho)
 \leq C_n\int_\Sigma J_{n-1}F\,dA_h,
 \label{eq:wall-smoothing}
\end{equation}
where \(J_{n-1}F\) is the tangential Jacobian.  The closure
\(D_\varrho\) of the unbounded component of
\(\mathbb R^n\setminus\overline{\mathcal O_\varrho}\) is a smooth
Euclidean exterior region disjoint from \(\Gamma\), and its boundary
satisfies the same area bound.
\end{lemma}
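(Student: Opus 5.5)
The plan is to build \(\mathcal O_\varrho\) as a sublevel set of a smoothed distance function to \(\Gamma\). A tube-volume estimate for the immersion, combined with the coarea formula, will select a regular level whose area is controlled by \(\int_\Sigma J_{n-1}F\,dA_h\). Set \(d(x):=\operatorname{dist}_\delta(x,\Gamma)\); it is \(1\)-Lipschitz and \(\Gamma=\{d=0\}\) is compact. Throughout, \(s\) denotes the small tube radius used below, \(\varepsilon\) the error in smoothing \(d\), and \(t\) the level selected at the end.

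\textbf{Step 1 (tube volume).} I will show that there is \(s_0=s_0(F)>0\) such that
\[
 \mathcal L^n(\{d<s\})\leq 2s(1+Cs)\int_\Sigma J_{n-1}F\,dA_h
 \qquad(0<s<s_0).
\]
Let \(N\) be a smooth unit normal field along \(F\); pass to the orientation double cover if \(\Sigma\) is not two-sided, or work locally with both normals. Consider the normal map \(E(\xi,r):=F(\xi)+rN(\xi)\) on \(\Sigma\times(-s,s)\). Every \(x\) with \(d(x)<s\) has a nearest point \(F(\xi)\), which exists by compactness. At \(\xi\) the function \(|x-F(\cdot)|^2\) is minimized, so \(x-F(\xi)\) is normal to \(dF(T_\xi\Sigma)\). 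Hence \(E\) is surjective onto the tube \(\{d<s\}\). The Jacobian of \(E\) is \(J_{n-1}F\cdot\prod_i(1-r\kappa_i)\leq J_{n-1}F\,(1+Cs)\), where the \(\kappa_i\) are the principal curvatures of the immersion, bounded by compactness. The area formula then gives the estimate; no injectivity is needed, only surjectivity. This is the step I expect to carry the real content, because \(\Gamma\) may self-intersect. Working on \(\Sigma\) through the immersion avoids any embeddedness assumption, and this is exactly why the bound involves \(\int_\Sigma J_{n-1}F\) rather than \(\mathcal H^{n-1}(\Gamma)\).

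\textbf{Step 2 (smoothing and level selection).} Fix \(s<\min\{s_0,\varrho/2\}\) with \(Cs\leq1\). Mollify \(d\) to a smooth \(f\) with \(|f-d|<\varepsilon:=s/8\) and \(|\nabla f|\leq1\); mollification of a \(1\)-Lipschitz function preserves the Lipschitz constant. Then \(f<s/8\) on \(\Gamma\), and \(\{f<3s/4\}\subset\{d<7s/8\}\). By the coarea formula,
\[
 \int_{s/4}^{3s/4}\mathcal H^{n-1}(\{f=t\})\,dt
 =\int_{\{s/4<f<3s/4\}}|\nabla f|
 \leq\mathcal L^n(\{d<s\})\leq 4s\int_\Sigma J_{n-1}F\,dA_h .
\]
By Sard's theorem and averaging over an interval of length \(s/2\), there is a regular value \(t\in(s/4,3s/4)\) with \(\mathcal H^{n-1}(\{f=t\})\leq 8\int_\Sigma J_{n-1}F\,dA_h\).

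\textbf{Step 3 (conclusions).} Put \(\mathcal O_\varrho:=\{f<t\}\). It is open, and bounded because \(\Gamma\) is compact. Its boundary is \(\{f=t\}\), which is smooth since \(t\) is regular. We have \(\Gamma\subset\mathcal O_\varrho\) because \(f<s/8<t\) on \(\Gamma\). Moreover \(\overline{\mathcal O_\varrho}\subset\{d\leq 7s/8\}\Subset\{d<\varrho\}\), so the containments hold and \eqref{eq:wall-smoothing} follows with \(C_n=8\).

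For the exterior region, note that \(\mathbb R^n\setminus\overline{\mathcal O_\varrho}=\{f>t\}\) has exactly one unbounded component \(U\). Its closure is \(D_\varrho=U\cup\partial U\), where \(\partial U\) is a union of components of the smooth hypersurface \(\{f=t\}\), and near each such component \(U\) lies on the side \(\{f>t\}\). Therefore \(D_\varrho\) is a closed connected smooth submanifold with compact boundary that contains the complement of a large ball, i.e. a Euclidean exterior region. It is disjoint from \(\Gamma\) since \(f<t\) on \(\Gamma\). Finally, \(\mathcal H^{n-1}(\partial D_\varrho)\leq\mathcal H^{n-1}(\{f=t\})\), so the boundary of \(D_\varrho\) satisfies the same area bound.
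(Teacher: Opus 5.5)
Your proof is correct, and it takes a genuinely different route from the paper's.

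The paper never uses the normal geometry of the immersion. It covers \(\Gamma\) by finitely many balls \(B_{r_j}(a_j)\) with \(r_j<\varrho/4\) and \(\sum_j r_j^{n-1}\le C_n\mathcal H_\delta^{n-1}(\Gamma)\), taken directly from the definition of Hausdorff measure. It then bounds the perimeter of the union \(U\) by subadditivity, mollifies \(\mathbf 1_U\), and selects a regular level of the mollified indicator by coarea and Sard. The integral \(\int_\Sigma J_{n-1}F\,dA_h\) enters only through the area formula, \(\mathcal H_\delta^{n-1}(\Gamma)\le\int_\Sigma J_{n-1}F\,dA_h\).

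You instead prove a tube-volume (Minkowski content) bound \(\mathcal L^n(\{d<s\})\le 2s(1+Cs)\int_\Sigma J_{n-1}F\,dA_h\) via the normal map. Surjectivity comes from the nearest-point argument, and the area formula correctly sidesteps injectivity. You then run coarea on a mollified distance function with \(|\nabla f|\le1\).

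The paper's covering argument is softer and more general. It uses only that \(\Gamma\) is compact with finite \(\mathcal H^{n-1}\)-measure, and needs no curvature bounds, no normal field and no double cover. It also yields the bound with the multiplicity-free quantity \(\mathcal H_\delta^{n-1}(\Gamma)\). So your remark that self-intersections are exactly why the bound must count multiplicity is an artifact of your method, not of the problem.

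In exchange, your route uses the \(C^2\) structure of \(F\), with a threshold \(s_0(F)\) that is harmless since only the final constant matters. It produces an explicit constant independent of \(n\): \(8\), or \(16\) in the one-sided case unless you restrict to \(r\ge0\) on the double cover. It also gives \(\mathcal O_\varrho\) as a sublevel set of an almost-distance function, so \(\overline{\mathcal O_\varrho}\subset\{d\le 7s/8\}\) and the disjointness of \(D_\varrho\) from \(\Gamma\) can be read off directly.
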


\begin{proof}
Write \(N_s(\Gamma):=\{x:\operatorname{dist}_\delta(x,\Gamma)<s\}\)
and let \(P\) denote Euclidean perimeter.  By the definition of
Hausdorff measure and compactness, \(\Gamma\) has a finite cover by
balls \(B_{r_j}(a_j)\), with \(a_j\in\Gamma\) and
\(r_j<\varrho/4\), such that
\[
 \sum_jr_j^{n-1}\leq C_n\mathcal H_\delta^{n-1}(\Gamma)
 \leq C_n\int_\Sigma J_{n-1}F\,dA_h.
\]
Set \(U:=\bigcup_j B_{r_j}(a_j)\). Then
\(\Gamma\Subset U\Subset N_{\varrho/2}(\Gamma)\) and
\[
 P(U)\leq\sum_jP(B_{r_j}(a_j))\leq C_n\sum_jr_j^{n-1}.
\]
Convolve \(\mathbf1_U\) with a nonnegative smooth mollifier of
sufficiently small support.  The resulting function \(f\) equals one
near \(\Gamma\), has compact support in \(N_\varrho(\Gamma)\), and
satisfies \(\int_{\mathbb R^n}|Df|\leq P(U)\).  Coarea and Sard's
theorem give a regular value \(a\in(0,1)\) with
\(\mathcal H_\delta^{n-1}(\{f=a\})\leq2P(U)\).
The set \(\mathcal O_\varrho:=\{f>a\}\) has the required properties.
Finally, \(\partial D_\varrho\) is a union of components of
\(\partial\mathcal O_\varrho\), so its area does not increase.
\end{proof}

The next lemma extends the level selection argument of
\cite[Lemmas~3.2 and~3.3]{DongSong2025} to every dimension \(n\geq3\).
We prove the required volume estimate directly from the distribution
of \(Q\).  

\begin{lemma}\label{lem:small-level}
Assume the hypotheses and notation of
Proposition~\ref{prop:good-band}.  Let \(0<\epsilon<\epsilon_0(n)\)
and let \(\mathcal U\) be the component of \(\{Q<6\epsilon\}\)
containing the distinguished end.  Set
\[
 \mathcal E_Q:=\int_{\mathcal U}|dQ|_g^2\,dV_g,
 \qquad S_t:=\mathcal U\cap\{Q=t\}.
\]
If \(\mathcal E_Q>0\), there is a regular value
\(t_\epsilon\in(0,6\epsilon)\) such that
\[
 \mathcal H_g^{n-1}(S_{t_\epsilon})
 \leq C_n
 \left(\frac{\mathcal E_Q}{\epsilon^2}\right)^{\frac{n-1}{n-2}}.
\]
\end{lemma}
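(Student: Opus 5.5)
We combine the coarea formula with an isoperimetric-type lower bound on the region $\{Q<t\}$. The exponent $(n-1)/(n-2)$ signals a Sobolev/isoperimetric argument. For $t\in(0,6\epsilon)$ set
\[
 V(t):=\mathcal H^n_g(\mathcal U\cap\{t<Q<6\epsilon\}),\qquad A(t):=\mathcal H^{n-1}_g(S_t).
\]
By Sard's theorem almost every $t$ is a regular value, and coarea gives
\[
 \int_0^{6\epsilon}\int_{S_t}|dQ|_g\,dA\,dt=\int_{\mathcal U}|dQ|_g^2\,dV_g=\mathcal E_Q .
\]
Hence on a subset of $(\epsilon,2\epsilon)$ of measure at least $\epsilon/2$ we have $\int_{S_t}|dQ|\leq 4\mathcal E_Q/\epsilon$.

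This alone does not bound the area, since $|dQ|$ may be small on $S_t$. We therefore use Cauchy--Schwarz in the form
\[
 A(t)^2\leq\int_{S_t}|dQ|\cdot\int_{S_t}|dQ|^{-1},
\]
together with $\int_{S_t}|dQ|^{-1}=-V'(t)$. So we need control of the volume of the band $\{\epsilon<Q<2\epsilon\}\cap\mathcal U$. On $\mathcal U$ the map $\Phi$ has small distortion, so volumes and areas are comparable to Euclidean ones after pushing forward, counting multiplicity.

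The key input is an isoperimetric argument. The level set $S_t$ separates the region $\{Q<t\}$, which contains the distinguished end and is therefore unbounded, from $\{Q\geq t\}$. I would instead work with the bounded complement within $\mathcal U$ and apply a Sobolev inequality to the truncated function $\phi:=\min\{(6\epsilon-Q)_+,\,\cdot\,\}$. More precisely, take $\psi:=\bigl(\min\{Q,2\epsilon\}-\epsilon\bigr)_+$. This is $0$ near infinity on $E_0$ and equals $\epsilon$ on $\{Q\geq 2\epsilon\}$, which is nonempty unless the complement is empty. The function $\epsilon-\psi$ is compactly supported where $Q<2\epsilon$ off the distinguished end, so it is not compactly supported.

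The cleaner route is to apply the Euclidean Sobolev inequality through $\Phi$, valid on the distortion-controlled region $\{Q<6\epsilon\}$, to $\psi$, which vanishes near infinity:
\[
 \epsilon\,V_2^{\frac{n-2}{2n}}\lesssim\|\psi\|_{L^{2^*}}\lesssim\|d\psi\|_{L^2}\leq\mathcal E_Q^{1/2},
\]
where $V_2:=\mathcal H^n_g(\mathcal U\cap\{2\epsilon\leq Q<6\epsilon\})$. Doing the same with thresholds between $\epsilon$ and $3\epsilon$ gives $V(\epsilon)\lesssim(\mathcal E_Q/\epsilon^2)^{n/(n-2)}$.

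With this volume bound, on a subset of $(\epsilon,2\epsilon)$ of measure at least $\epsilon/4$ we also have $-V'(t)\leq 8V(\epsilon)/\epsilon$. Then
\[
 A(t)^2\lesssim\frac{\mathcal E_Q}{\epsilon}\cdot\frac{1}{\epsilon}\Bigl(\frac{\mathcal E_Q}{\epsilon^2}\Bigr)^{\frac n{n-2}}=\Bigl(\frac{\mathcal E_Q}{\epsilon^2}\Bigr)^{\frac{2n-2}{n-2}},
\]
which is exactly the claimed bound after taking square roots. Intersecting with the full-measure set of regular values yields $t_\epsilon$.

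The main obstacle is justifying the Sobolev inequality on $\mathcal U$, which is not a subset of $\mathbb R^n$. The map $\Phi$ is only a local diffeomorphism there with $|\Phi^*\delta-g|\lesssim\sqrt\epsilon$, and it may fail to be injective. I would push $\psi$ forward as the function $\Psi(y):=\sum_{x\in\Phi^{-1}(y)\cap\mathcal U}\psi(x)$, or use $\sup$ over preimages. I would then apply the Euclidean Gagliardo--Nirenberg inequality in its $L^1$ form to $\psi^{2(n-1)/(n-2)}$, using the area formula: multiplicity only helps the left side through the triangle inequality on the gradient side. Alternatively, I would prove the $L^1$ Sobolev inequality directly via the isoperimetric inequality for the pushed-forward currents. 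A further point to check is that $\psi$ has compact support modulo the end, where it vanishes. This holds because $\{Q\geq\epsilon\}\cap\mathcal U$ stays away from infinity on $E_0$ and $\mathcal U$ avoids the auxiliary ends by \eqref{eq:defect-auxiliary-limit}.
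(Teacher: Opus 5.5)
Your route differs from the paper's, and as written it has a genuine gap at the Sobolev step. You bound a band volume by a Sobolev/capacity inequality, then use coarea and Cauchy--Schwarz. The problem is the test function. $\psi=(\min\{Q,2\epsilon\}-\epsilon)_+$ vanishes near the distinguished infinity, but it equals $\epsilon$ near the inner boundary $\partial\mathcal U\subset\{Q=6\epsilon\}$. Its support is compact in $M$ but \emph{not} compactly contained in $\mathcal U$, and the latter is what the pushforward argument needs. Consequently $\Psi(y)=\sum_{x\in\Phi^{-1}(y)\cap\mathcal U}\psi(x)$ jumps by $\epsilon$ across $\Phi(\partial\mathcal U)$. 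The Euclidean $BV$ Sobolev inequality then produces an extra term of size $\epsilon\,\mathcal H^{n-1}_g(\partial\mathcal U)$, which nothing controls. Extending $\psi$ by the constant $\epsilon$ past $\partial\mathcal U$ does not help. On $M\setminus\mathcal U$ the map $\Phi$ has no distortion bounds, and that set contains the auxiliary ends, so it has infinite volume. (If $\Phi$ were injective with $\Phi(\mathcal U)$ of bounded complement, you could extend $\psi\circ\Phi^{-1}$ by $\epsilon$ in $\mathbb R^n$; possible non-injectivity is exactly the difficulty.)

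In fact your bound $V(\epsilon)\lesssim(\mathcal E_Q/\epsilon^2)^{n/(n-2)}$ for the whole band $\mathcal U\cap\{\epsilon<Q<6\epsilon\}$ fails under the properties you use. Let $\mathcal U$ contain a second sheet of $\Phi$ over a ball of radius $R$, joined to the main sheet through a neck of size $\rho$. For instance, glue two copies of $\mathbb R^n$ crosswise along a small slit and remove a thin neighborhood of the branch locus. Let $Q$ rise from below $\epsilon$ to $6\epsilon-\delta$ across the neck, and from $6\epsilon-\delta$ to $6\epsilon$ near the rim of the second sheet. Then $\mathcal E_Q\lesssim\epsilon^2\rho^{n-2}+\delta^2R^{n-2}$ while $V(\epsilon)\gtrsim R^n$, so no dimensional constant works as $\rho,\delta\to0$. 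Your final area estimate uses $V(\epsilon)$, so the argument does not close.

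The paper avoids this by working only with slabs $\mathcal A_{s,t}$ between two regular levels below $5\epsilon$. The entire boundary $S_s\cup S_t$ of such a slab lies in $\mathcal U$ and is counted in the multiplicity isoperimetric inequality. Choosing $t_0$ with nearly minimal area absorbs the second boundary term. The integrated inequality for $W^\beta/W'$ then plays the role of your Sobolev inequality.

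Your approach can be repaired more directly with a plateau function, for example $\psi=\min\{(Q-\epsilon)_+,\epsilon,(5\epsilon-Q)_+\}$. It is compactly supported in $\mathcal U$, because $\overline{\mathcal U}\cap\{Q\leq5\epsilon\}\subset\mathcal U$, $Q\to0$ on the distinguished end, and $Q\to2n$ on the auxiliary ends. Your multiplicity pushforward, using $(\sum_x f(x))^{n/(n-1)}\geq\sum_x f(x)^{n/(n-1)}$ over preimages, then legitimately gives $\operatorname{Vol}_g(\mathcal U\cap\{2\epsilon<Q<4\epsilon\})\leq C_n(\mathcal E_Q/\epsilon^2)^{n/(n-2)}$. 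Now run your coarea, Cauchy--Schwarz and Chebyshev selection over regular $t\in(2\epsilon,4\epsilon)$. Use this slab volume, not $V(\epsilon)$, to bound $\int_{2\epsilon}^{4\epsilon}\int_{S_t}|dQ|_g^{-1}\,dA_g\,dt$; this yields the lemma. With that change your argument is a valid and somewhat shorter alternative to the paper's level selection.
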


\begin{proof}
The eigenvalues of \(G\) on \(\mathcal U\) lie in
\([1-\sqrt{6\epsilon},1+\sqrt{6\epsilon}]\).  Thus the singular
values of \(d\Phi\) are bounded above and below by dimensional
constants.  Its orientation sign is positive near infinity and hence
throughout \(\mathcal U\).  Also, \(Q\to0\) on the distinguished
end and \(Q\to2n>6\epsilon\) on every auxiliary end, after
reducing \(\epsilon_0(n)\).  Consequently, each \(S_t\) with
\(0<t<6\epsilon\) is compact.

For regular \(0<s<t<5\epsilon\), write
\(\mathcal A_{s,t}:=\mathcal U\cap\{s<Q<t\}\).  This slab has
compact closure and boundary \(S_s\cup S_t\).  Although \(\Phi\)
need not be injective, orientation preservation gives
\[
 \Phi_\#[\![\mathcal A_{s,t}]\!]
 =\chi_{s,t}[\![\mathbb R^n]\!],\qquad
 \chi_{s,t}(y):=\#\bigl(\Phi^{-1}(y)\cap\mathcal A_{s,t}\bigr).
\]
Here \([\![\cdot]\!]\) denotes the integration current with the
chosen orientation.  The multiplicity \(\chi_{s,t}\) is compactly
supported, nonnegative, and integer valued.  The area formula and
commutation of pushforward with boundary imply
\[
 |D\chi_{s,t}|(\mathbb R^n)
 \leq C_n\bigl(\mathcal H_g^{n-1}(S_s)
                    +\mathcal H_g^{n-1}(S_t)\bigr).
\]
Since \(\chi_{s,t}^{n/(n-1)}\geq\chi_{s,t}\), the area formula and
the Euclidean \(BV\) Sobolev inequality give
\begin{equation}
 \begin{aligned}
 \operatorname{Vol}_g(\mathcal A_{s,t})^{(n-1)/n}
 &\leq C_n\|\chi_{s,t}\|_{L^{n/(n-1)}(\mathbb R^n)}\\
 &\leq C_n|D\chi_{s,t}|(\mathbb R^n)\\
 &\leq C_n\bigl(\mathcal H_g^{n-1}(S_s)
                    +\mathcal H_g^{n-1}(S_t)\bigr).
 \end{aligned}
 \label{eq:level-bv-isoperimetric}
\end{equation}
See \cite[Theorem~5.10(i)]{EvansGariepy2015} for the Sobolev
inequality.  Positivity of the multiplicity is essential here: it
prevents cancellation between different sheets.

If the infimum of \(\mathcal H_g^{n-1}(S_t)\) over regular
\(t\in(0,5\epsilon)\) is zero, the conclusion follows immediately.
Otherwise choose a regular \(t_0\in(0,5\epsilon)\) such that
\[
 \mathcal H_g^{n-1}(S_{t_0})
 \leq2\inf_{\substack{0<t<5\epsilon\\
 t\ {\rm regular}}}
                   \mathcal H_g^{n-1}(S_t).
\]
Choose an interval of level values with endpoint \(t_0\) and length
at least \(2\epsilon\) as follows. Set
\[
 (\sigma,L):=
 \begin{cases}
 (-1,t_0-\epsilon),&t_0\geq3\epsilon,\\
 (1,5\epsilon-t_0),&t_0<3\epsilon,
 \end{cases}
\]
so \(L\geq2\epsilon\).  For \(0<h<L\), let \(\mathcal B_h\)
be the slab between the levels \(t_0\) and \(t_0+\sigma h\), and set
\[
 W(h):=\operatorname{Vol}_g
       (\mathcal B_h\cap\{|dQ|_g\ne0\}),\qquad
 A(h):=\mathcal H_g^{n-1}(S_{t_0+\sigma h}).
\]
By the coarea formula, the volume \(W(h)\) of the noncritical part of
the slab is absolutely continuous in \(h\), with derivative given
almost everywhere by
\[
 W'(h)=\int_{S_{t_0+\sigma h}}\frac1{|dQ|_g}\,dA_g>0.
\]
The positivity of \(W'(h)\) follows because every regular level in
\((0,5\epsilon)\) has positive area.  By
\eqref{eq:level-bv-isoperimetric} and the choice of \(t_0\),
\[
 W^{(n-1)/n}(h)\leq C_nA(h).
\]
Surface Cauchy--Schwarz then gives, with
\(\beta:=2(n-1)/n>1\),
\[
 \frac{W^\beta(h)}{W'(h)}
 \leq C_n\frac{A^2(h)}{W'(h)}
 \leq C_n\int_{S_{t_0+\sigma h}}|dQ|_g\,dA_g.
\]
Integrating and applying coarea once more yields
\begin{equation}
 \int_0^L\frac{W^\beta(h)}{W'(h)}\,dh
 \leq C_n\mathcal E_Q.
 \label{eq:radial-rearrangement-energy}
\end{equation}

If \(W(\epsilon)=0\), the estimate below is immediate.  Otherwise
\(W\geq W(\epsilon)>0\) on \([\epsilon,L]\), so Cauchy--Schwarz
and the chain rule for absolutely continuous functions give
\[
 \begin{aligned}
 (L-\epsilon)^2
 &\leq
 \left(\int_\epsilon^L\frac{W^\beta}{W'}\,dh\right)
 \left(\int_\epsilon^L\frac{W'}{W^\beta}\,dh\right)\\
 &\leq C_n\mathcal E_Q\,
 \frac{W^{1-\beta}(\epsilon)-W^{1-\beta}(L)}{\beta-1}\\
 &\leq C_n\mathcal E_Q\,W^{1-\beta}(\epsilon).
 \end{aligned}
\]
Since \(L-\epsilon\geq\epsilon\) and
\(\beta-1=(n-2)/n\), we obtain
\begin{equation}
 \operatorname{Vol}_g
 (\mathcal B_\epsilon\cap\{|dQ|_g\ne0\})
 =W(\epsilon)
 \leq C_n
 \left(\frac{\mathcal E_Q}{\epsilon^2}\right)^{n/(n-2)}.
 \label{eq:capacity-band-volume}
\end{equation}

Let \(I\) be the interval between \(t_0\) and
\(t_0+\sigma\epsilon\), whose length is \(\epsilon\).
Finally, coarea and Cauchy--Schwarz imply
\[
 \begin{aligned}
 \int_I\mathcal H_g^{n-1}(S_t)\,dt
 &=\int_{\mathcal B_\epsilon}|dQ|_g\,dV_g\\
 &\leq\mathcal E_Q^{1/2}W^{1/2}(\epsilon).
 \end{aligned}
\]
Divide by \(\epsilon\), use \eqref{eq:capacity-band-volume}, and
choose a regular value by Sard's theorem.  This proves the lemma.
\end{proof}

The selected level bounds a region on which \(\Phi\) is locally
invertible.  Its boundary image may enclose several sheets.  The next
lemma retains the unique sheet over the unbounded component of the
complement of that image.

\begin{lemma}\label{lem:one-sheet-extraction}
Assume the hypotheses and notation of
Proposition~\ref{prop:good-band}.  Let
\(0<\epsilon<\epsilon_0(n)\), let \(\mathcal U\) and
\(\mathcal E_Q>0\) be as in Lemma~\ref{lem:small-level}, and let
\(t_\epsilon\) be the regular value obtained there.  If
\(\mathcal C\) is the component of \(\{Q<t_\epsilon\}\) containing
the distinguished end, then there are exterior regions
\(E\subset\overline{\mathcal C}\) and
\(\Omega'\subset\mathbb R^n\) such that
\(\Phi:E\to\Omega'\) is an orientation-preserving diffeomorphism and
\[
 \mathcal H_g^{n-1}(\partial E)
 \leq C_n\mathcal H_g^{n-1}(S_{t_\epsilon}).
\]
\end{lemma}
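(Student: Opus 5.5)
The plan is to apply Lemma~\ref{lem:wall-smoothing} to the image of the cutting boundary and to keep only the sheet of \(\Phi\) lying over the unbounded complementary component. Set \(\Sigma:=S_{t_\epsilon}\). This is a compact smooth hypersurface, because \(t_\epsilon\) is a regular value and the level sets of \(Q\) in \(\mathcal U\) are compact. On \(\overline{\mathcal C}\subset\mathcal U\) we have \(Q\le t_\epsilon<6\epsilon\), so the eigenvalues of \(G\) lie in \([1-\sqrt{6\epsilon},1+\sqrt{6\epsilon}]\). Hence \(\Phi\) is an orientation-preserving local diffeomorphism there with dimensional bi-Lipschitz bounds, and \(\Phi|_\Sigma\) is an immersion whose tangential Jacobian is at most \(C_n\). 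The boundary \(\partial\mathcal C\) is a union of components of \(\Sigma\), so it suffices to work with \(\Gamma:=\Phi(\partial\mathcal C)\). Lemma~\ref{lem:wall-smoothing}, applied with some small \(\varrho\), produces a smooth Euclidean exterior region \(D_\varrho\) disjoint from \(\Gamma\) with
\[
\mathcal H^{n-1}_\delta(\partial D_\varrho)\le C_n\mathcal H^{n-1}_g(\partial\mathcal C)\le C_n\mathcal H^{n-1}_g(S_{t_\epsilon}).
\]

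Next comes a covering argument. Set \(V:=\mathcal C\cap\Phi^{-1}(D_\varrho)\). Because \(D_\varrho\cap\Gamma=\emptyset\) and \(\Phi\) is proper on \(\overline{\mathcal C}\) (by the asymptotics \(\Phi(x)=x+o(r)\) on the distinguished end and compactness of the rest), the restriction \(\Phi:\Phi^{-1}(D_\varrho)\cap\overline{\mathcal C}\to D_\varrho\) is a proper local diffeomorphism between manifolds with boundary, where the boundary maps to \(\partial D_\varrho\). It is therefore a finite covering over the connected set \(D_\varrho\), and its number of sheets equals the local degree. We compute this degree near infinity. For \(|y|\) large, \(\Phi^{-1}(y)\cap\mathcal C\) contains exactly one point of the distinguished end, since \(d\Phi=\operatorname{Id}+o(1)\) there. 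Any other preimage lies in the compact set \(\overline{\mathcal C}\setminus\{r\ge R\}\), whose image is bounded. So for \(|y|\) large the fiber consists of a single point, and since all sheets are orientation-preserving, the covering has degree one. We then take \(E\) to be the component of \(\Phi^{-1}(D_\varrho)\cap\overline{\mathcal C}\) containing the distinguished end and \(\Omega':=D_\varrho\). Then \(\Phi:E\to\Omega'\) is a bijective local diffeomorphism up to the boundary, hence a diffeomorphism. We have \(E\subset\overline{\mathcal C}\), and \(E\) is connected, closed, with smooth compact boundary \(\Phi^{-1}(\partial D_\varrho)\cap E\).

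For the area bound, \(\Phi\) is bi-Lipschitz with dimensional constants on \(E\), so
\[
\mathcal H^{n-1}_g(\partial E)\le C_n\mathcal H^{n-1}_\delta(\partial D_\varrho)\le C_n\mathcal H^{n-1}_g(S_{t_\epsilon}).
\]

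The main obstacle is making the covering claim rigorous near \(\partial D_\varrho\). The issue is to show that preimages of \(D_\varrho\) cannot approach \(\partial\mathcal C\): its image \(\Gamma\) stays at distance at least some \(\varrho'>0\) from \(D_\varrho\), because \(\overline{D_\varrho}\) is closed and disjoint from the compact set \(\Gamma\). Combined with path lifting, this gives the covering property and the constancy of the sheet count. A second point needs care: every component of \(\Phi^{-1}(D_\varrho)\cap\overline{\mathcal C}\) other than \(E\) would cover \(D_\varrho\) and hence reach infinity, which is impossible away from the distinguished end. This is exactly what forces uniqueness of the sheet.
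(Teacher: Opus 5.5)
Your argument is correct and is essentially the paper's proof: you apply wall smoothing to \(\Gamma=\Phi(\partial\mathcal C)\), use properness plus local invertibility to make the preimage count locally constant over the unbounded complementary region, and show the count is one near infinity. The paper differs only in detail: it runs the covering argument over \(D_\infty\) rather than \(D_\varrho\), and it gets the count of one from a degree and straight-homotopy argument, where you use injectivity of \(\Phi\) on the tail. Your route needs a short quasi-convexity and contraction argument for existence and uniqueness of the preimage in the non-convex tail; \(d\Phi=\operatorname{Id}+o(1)\) alone does not give it. You also omit the case \(\partial\mathcal C=\varnothing\), where Lemma~\ref{lem:wall-smoothing} (stated for nonempty \(\Sigma\)) does not apply. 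There \(\mathcal C=M\), and your covering argument with \(E=M\), \(\Omega'=\mathbb R^n\) finishes the proof, exactly as in the paper.
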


\begin{proof}
The boundary \(\partial\mathcal C\) is a compact smooth union of
components of \(S_{t_\epsilon}\).  In particular,
\[
 \mathcal H_g^{n-1}(\partial\mathcal C)
 \leq\mathcal H_g^{n-1}(S_{t_\epsilon}).
\]
The closure of \(\mathcal C\) meets no auxiliary end outside a compact
set.  Since \(\Phi=x+o(r)\) on the distinguished end,
\(\Phi:\overline{\mathcal C}\to\mathbb R^n\) is proper.  Put
\(\Gamma:=\Phi(\partial\mathcal C)\), and let \(D_\infty\) be the
unbounded component of \(\mathbb R^n\setminus\Gamma\).
The number of preimages in \(\mathcal C\) is finite and locally
constant on \(D_\infty\), by properness and local invertibility.

We compute this number at a point \(y\) outside the image of a large
compact truncation of \(\overline{\mathcal C}\).  Truncate again at
a much larger coordinate sphere.  The inner boundary image lies in a
ball avoiding \(y\), so its contribution to the degree is zero.
On the outer sphere, the straight homotopy from \(\Phi(x)\) to \(x\)
avoids \(y\), because \(\Phi(x)=x+o(r)\).  Its degree is therefore
one.  Every preimage has positive local degree, so \(y\) has exactly
one preimage.  Local constancy now gives the same conclusion throughout
\(D_\infty\).  Thus
\[
 \Phi:\Phi^{-1}(D_\infty)\cap\mathcal C\longrightarrow D_\infty
\]
is a diffeomorphism.

If \(\partial\mathcal C=\varnothing\), connectedness gives
\(\mathcal C=M\). In this case, the preceding argument yields a global
diffeomorphism onto \(\mathbb R^n\), and we take
\(E:=M\), \(\Omega':=\mathbb R^n\).  Otherwise apply
Lemma~\ref{lem:wall-smoothing} to
\(\Phi|_{\partial\mathcal C}\), and let \(\Omega'\) be the closure
of the unbounded component of
\(\mathbb R^n\setminus\overline{\mathcal O_\varrho}\).
Because \(\Gamma\subset\mathcal O_\varrho\), this closure lies in
\(D_\infty\), including its compact smooth boundary.  Define
\(E\) using the unique inverse of \(\Phi\) on \(\Omega'\).
It is a smooth exterior region containing the distinguished end.
It is closed in \(M\) because \(\Omega'\) is closed and disjoint from
\(\Phi(\partial\mathcal C)\).
The bounds for the singular values of \(d\Phi\), the area formula,
and \eqref{eq:wall-smoothing} give
\[
 \begin{aligned}
 \mathcal H_g^{n-1}(\partial E)
 &\leq C_n\mathcal H_\delta^{n-1}(\partial\Omega')\\
 &\leq C_n\int_{\partial\mathcal C}
         J_{n-1}(\Phi|_{\partial\mathcal C})\,dA_g\\
 &\leq C_n\mathcal H_g^{n-1}(\partial\mathcal C).
 \end{aligned}
\]
This proves the claim.
\end{proof}

\begin{proof}[Proof of Theorem~\ref{thm:quantitative}]
Suppose that \(m>0\).
Fix \(0<\epsilon<\epsilon_0(n)\), and let \(\mathcal U\) be the
component of \(\{Q<6\epsilon\}\) containing the distinguished end.
Proposition~\ref{prop:good-band} gives
\[
 \mathcal E_Q:=\int_{\mathcal U}|dQ|_g^2\,dV_g
 \leq C(n)\epsilon m.
\]
If \(\mathcal E_Q=0\), then \(Q=0\) on \(\mathcal U\), since it
is constant there and tends to zero at infinity.  Continuity makes
\(\mathcal U\) closed as well as open, so \(\mathcal U=M\).
There are no auxiliary ends, and \(\Phi\) is a proper local
diffeomorphism of degree one.  Thus \(\Phi:M\to\mathbb R^n\) is a
diffeomorphism.  Since \(G=I\), it is an isometry, and we take
\(E_\epsilon:=M\), \(\Omega_\epsilon:=\mathbb R^n\), and
\(\Phi_\epsilon:=\Phi\).

If \(\mathcal E_Q>0\), Lemmas~\ref{lem:small-level} and
\ref{lem:one-sheet-extraction} give exterior regions
\(E_\epsilon\), \(\Omega_\epsilon\) and a diffeomorphism
\(\Phi_\epsilon:=\Phi|_{E_\epsilon}\) between them, with
\begin{equation}
 \mathcal H_g^{n-1}(\partial E_\epsilon)
 \leq C(n)
 \left(\frac{m}{\epsilon}\right)^{\frac{n-1}{n-2}}.
 \label{eq:regular-boundary}
\end{equation}
Let \(g_\epsilon^\Phi:=(\Phi_\epsilon^{-1})^*g\).
Its inverse matrix is \(G\), because
\(\Phi_\epsilon^*dy^a=d\Phi^a\).  Matrix inversion and
\(|G-I|\leq\sqrt{6\epsilon}<1/2\) therefore give
\begin{equation}
 \sup_{\Omega_\epsilon}|g_\epsilon^\Phi-\delta|_\delta
 \leq C(n)\sqrt\epsilon.
 \label{eq:regular-metric}
\end{equation}
The required asymptotics follow from those of \(\Phi\).
\end{proof}
\section{Intrinsic distances and measured convergence}
\label{sec:inner-distance}

The preceding metric comparison controls the lengths of curves
contained in the exterior region. Convergence of the intrinsic
distances requires an additional argument because curves must avoid
the excised set.  Dong's Euclidean
excision theorem removes this obstruction.  We then transfer the
conclusion to the Riemannian metric.  The same comparison also allows
us to remove the assumption of harmonic asymptotics.

\subsection{The case of harmonic asymptotics}

\begin{lemma}
\label{lem:bilinear-transfer}
Let \(Z_i\) be connected smooth \(n\)-manifolds, possibly with boundary,
with Riemannian metrics \(g_{0i}\) and \(g_{1i}\).  Assume that
\((Z_i,\widehat d_{g_{0i}})\) is complete and
\[
 (1-\vartheta_i)g_{0i}\leq g_{1i}
 \leq(1+\vartheta_i)g_{0i},
 \qquad 0\leq\vartheta_i<1,\qquad \vartheta_i\to0.
\]
Then \((Z_i,\widehat d_{g_{1i}})\) is complete, and
\[
 \sqrt{1-\vartheta_i}\,\widehat d_{g_{0i}}
 \leq\widehat d_{g_{1i}}
 \leq\sqrt{1+\vartheta_i}\,\widehat d_{g_{0i}},
\]
\[
 (1-\vartheta_i)^{n/2}dV_{g_{0i}}
 \leq dV_{g_{1i}}
 \leq(1+\vartheta_i)^{n/2}dV_{g_{0i}},
\]
and
\[
 \begin{aligned}
 (1-\vartheta_i)^{(n-1)/2}
 \mathcal H_{g_{0i}}^{n-1}(\partial Z_i)
 &\leq \mathcal H_{g_{1i}}^{n-1}(\partial Z_i)\\
 &\leq(1+\vartheta_i)^{(n-1)/2}
 \mathcal H_{g_{0i}}^{n-1}(\partial Z_i).
 \end{aligned}
\]
For any \(z_i\in Z_i\), the convergence
\[
 (Z_i,\widehat d_{g_{0i}},z_i,dV_{g_{0i}})
 \xrightarrow{\mathrm{pmGH}}(X,d,z,\mu)
\]
implies the same convergence with \(g_{0i}\) replaced by \(g_{1i}\).
\end{lemma}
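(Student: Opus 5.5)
The argument is pointwise linear algebra, integrated along curves, over the full space and over the boundary, followed by an approximation argument for the convergence statement.

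\textbf{Distances and completeness.} At each point the two-sided bound gives
\[
\sqrt{1-\vartheta_i}\,|v|_{g_{0i}}\leq|v|_{g_{1i}}\leq\sqrt{1+\vartheta_i}\,|v|_{g_{0i}}
\]
for every tangent vector $v$. Integrating along an arbitrary piecewise smooth curve in $Z_i$, which may touch $\partial Z_i$, compares the two lengths with the same constants. Taking infima over curves joining two points gives the comparison of the intrinsic length metrics $\widehat d_{g_{0i}}$ and $\widehat d_{g_{1i}}$. Completeness transfers immediately: the identity map is bi-Lipschitz, so $\widehat d_{g_{1i}}$-Cauchy sequences are $\widehat d_{g_{0i}}$-Cauchy, and the two topologies agree.

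\textbf{Volumes and boundary areas.} Diagonalize $g_{1i}$ with respect to $g_{0i}$ at a point. The eigenvalues lie in $[1-\vartheta_i,1+\vartheta_i]$, so $\sqrt{\det}$ is bounded by $(1\pm\vartheta_i)^{n/2}$, which gives the volume comparison. The restrictions of the metrics to $T\partial Z_i$ satisfy the same quadratic-form inequality. The min–max principle then places the eigenvalues of the restricted forms in the same interval, which gives the $(n-1)/2$ powers for the induced area densities on $\partial Z_i$.

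\textbf{Convergence.} Let $\psi_i:B_{R}^{g_{0i}}(z_i)\to X$ be the $\delta_i$-approximations supplied by pointed measured Gromov--Hausdorff convergence, with pushforward measures converging weakly. Since
\[
|\widehat d_{g_{1i}}-\widehat d_{g_{0i}}|\leq C\vartheta_i\,\widehat d_{g_{0i}},
\]
on a ball of radius comparable to $R$ the same maps distort $\widehat d_{g_{1i}}$ by at most $\delta_i+CR\vartheta_i\to0$. The balls for the two metrics are nested between balls of radii $R(1\pm C\vartheta_i)$, and this handles the radius bookkeeping. For the measures, $dV_{g_{1i}}=\rho_i\,dV_{g_{0i}}$ with $\sup|\rho_i-1|\to0$. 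Hence
\[
\Bigl|\int f\circ\psi_i\,dV_{g_{1i}}-\int f\circ\psi_i\,dV_{g_{0i}}\Bigr|\leq\|f\|_\infty\sup|\rho_i-1|\,V_{g_{0i}}(B),
\]
and the volumes $V_{g_{0i}}(B)$ of balls of fixed radius are bounded along the sequence, since they converge to $\mu$ of a ball in $X$. So the weak limits coincide.

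\textbf{Main obstacle.} The only delicate point is matching the definitions of pmGH convergence: approximations are defined on balls of one metric but used with the other, and weak convergence must be tested with compactly supported continuous functions. I would handle both by enlarging the radii slightly and using that $\vartheta_i\to0$ uniformly.
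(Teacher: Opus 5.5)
Your proposal is correct and follows the paper's argument essentially step for step. You compare curve lengths to get the distance bounds and completeness, use relative diagonalization (restricted to $T\partial Z_i$ for the boundary) to get the measure bounds, and reuse the same approximation maps on slightly enlarged $g_{0i}$-balls, with $\sup|\rho_i-1|\to0$ and a uniform volume bound, for the measured convergence. One small precision: ball volumes converge to $\mu$ of the limit ball only for radii with $\mu(\partial B(z,R))=0$, which is how the paper picks its radii, but the uniform bound you actually use holds regardless.
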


\begin{proof}
Comparison of curve lengths gives the distance bounds and transfers
completeness.  The volume and boundary estimates follow by
diagonalizing \(g_{1i}\) relative to \(g_{0i}\).  Both intrinsic spaces
are proper, since they are complete, locally compact length spaces.

Fix a finite radius \(R\).  The distance bounds place the two
\(R\)-balls in a fixed, slightly larger \(g_{0i}\)-ball.  On that ball,
\[
 |\widehat d_{g_{1i}}-\widehat d_{g_{0i}}|
 \leq C_R\vartheta_i.
\]
With \(\rho_i:=dV_{g_{1i}}/dV_{g_{0i}}\), we also have
\(\|\rho_i-1\|_{L^\infty}\leq C_n\vartheta_i\).
The assumed measured convergence bounds the \(g_{0i}\)-volume of the
enlarged ball, so the two measures differ there by \(o(1)\) in total
variation.  Composing the given approximations with the identity map
therefore gives the same measured limit.  Applying this argument at
radii for which \(\mu(\partial B(z,R))=0\) proves the pointed assertion.
\end{proof}

Dong--Song proved the required Euclidean excision result in dimension
three \cite[Theorem~4.1]{DongSong2025}.  Its extension to every
\(n\geq2\), including measured convergence for arbitrary base points,
is \cite[Appendix~A, Theorem~A.1]{Dong2025}.

\begin{proof}[Proof of Theorem~\ref{thm:main} under harmonic asymptotics]
Assume that each distinguished end has harmonic asymptotics.  Theorem~\ref{thm:positive-mass} gives \(m_i\geq0\).  For all sufficiently
large indices with \(m_i>0\), apply
Theorem~\ref{thm:quantitative} with \(\epsilon_i:=\sqrt{m_i}\).
We obtain smooth connected exterior regions \(E_i\) and
diffeomorphisms \(\Phi_i:E_i\to\Omega_i\) satisfying
\[
 \begin{aligned}
 \mathcal H_{g_i}^{n-1}(\partial E_i)
 &\leq C(n)m_i^{\frac{n-1}{2(n-2)}}\longrightarrow0,\\
 \| (\Phi_i^{-1})^*g_i-\delta\|_{C^0(\Omega_i)}
 &\leq C(n)m_i^{1/4}\longrightarrow0.
 \end{aligned}
\]
The boundary measure comparison gives
\(\mathcal H_\delta^{n-1}(\partial\Omega_i)\to0\).

Apply \cite[Appendix~A, Theorem~A.1]{Dong2025} to
\(\operatorname{int}\Omega_i\).  It gives smooth closed subregions
with boundary area tending to zero and the Euclidean pointed measured
limit for every choice of base points.  The construction leaves the
region unchanged outside a compact set.  Retain the component
containing infinity and denote it by \(\Omega_i'\).  This only discards
boundary components and does not change intrinsic pointed balls based
in \(\Omega_i'\).  Thus \(\Omega_i'\) is a complete smooth exterior
region and, for every \(y_i\in\Omega_i'\),
\[
 \mathcal H_\delta^{n-1}(\partial\Omega_i')\longrightarrow0,
 \qquad
 (\Omega_i',\widehat d_{\mathrm{Eucl}},y_i,dx)
 \xrightarrow{\mathrm{pmGH}}
 (\mathbb R^n,d_{\mathrm{Eucl}},0,dx).
\]
Here \(dx\) is restricted Lebesgue measure. Indeed, the Euclidean
isoperimetric inequality gives
\[
 \operatorname{Vol}_\delta(\mathbb R^n\setminus\Omega_i')
 \leq C_n\mathcal H_\delta^{n-1}(\partial\Omega_i')^{n/(n-1)}
 \longrightarrow0.
\]
This estimate is invariant under translations of the base point, and
the limiting measure is Lebesgue measure without normalization.

Set \(\widehat E_i:=\Phi_i^{-1}(\Omega_i')\).
Lemma~\ref{lem:bilinear-transfer}, with
\(g_{0i}:=\delta\) and \(g_{1i}:=(\Phi_i^{-1})^*g_i\), gives
\[
 \mathcal H_{g_i}^{n-1}(\partial\widehat E_i)\longrightarrow0
\]
and, for every \(x_i\in\widehat E_i\),
\[
 (\widehat E_i,\widehat d_{g_i},x_i,dV_{g_i})
 \xrightarrow{\mathrm{pmGH}}
 (\mathbb R^n,d_{\mathrm{Eucl}},0,dx).
\]
If \(m_i=0\), then \((M_i,g_i)\cong\mathbb R^n\), and we take
\(\widehat E_i:=M_i\).  Choose arbitrary smooth coordinate exterior
regions for the finitely many remaining initial indices.  These choices
do not affect the limit.
\end{proof}

\subsection{General asymptotics}
\label{subsec:density}

Approximation by metrics with harmonic asymptotics and convergent ADM
masses, together with global bilinear comparison, extends the result
to general AF metrics. The approximation must preserve asymptotic
flatness on all auxiliary ends for the graph and scalar constructions
to apply.

\begin{lemma}
\label{lem:finite-end-density}
Let \((M^n,g)\), \(n\geq3\), be a smooth, connected, complete oriented
manifold without boundary, with finitely many AF ends,
\(R_g\geq0\), and \(R_g\in L^1(M)\).  Fix a distinguished end.
For every \(0<\beta<1/2\), there is a smooth metric \(\check g\)
such that
\[
 R_{\check g}\geq0,\qquad R_{\check g}\in L^1(M),\qquad
 |m(\check g)-m(g)|<\beta,
\]
\[
 (1-\beta)g\leq\check g\leq(1+\beta)g.
\]
The distinguished end has harmonic asymptotics for \(\check g\).
Every auxiliary end remains asymptotically flat after a constant
dilation of its coordinates.
\end{lemma}

\begin{proof}
Let \(\tau\) be the least AF rate of the ends.  Choose \(p_*>n\)
and
\[
 \frac{n-2}{2}<q'<q<\min\{\tau,n-2\}.
\]
The weighted \(C^{2,\alpha}_{-\tau}\) hypothesis implies the weighted
Sobolev hypotheses of Lee--Lesourd--Unger
\cite[Theorem~1.3]{LeeLesourdUnger2023}, with exponent \(p_*\) and
weight \(q\). Apply that theorem with approximation weight \(q'\)
and a compact set containing the core and the inner portions of all
ends. The theorem yields the mass and bilinear estimates, nonnegative scalar
curvature, and harmonic flatness on the distinguished tail.  Since
\(g\) and the cutoffs are smooth, elliptic regularity makes the
approximating metric smooth.

On the auxiliary ends, the construction in
\cite[Section~3, equation~(3.1)]{LeeLesourdUnger2023} uses a preliminary
metric \(g_{\rm pre}\) that equals \(g\) outside the distinguished tail, and
\[
 \check g=\phi^{4/(n-2)}g_{\rm pre},
\]
where \(\phi\) is positive and uniformly bounded above and away from
zero.  The cutoff equals one on each auxiliary end, so the potential
in the conformal equation vanishes there and \(\Delta_g\phi=0\).
Boundedness and annular elliptic estimates place \(\phi\) in weighted
Sobolev spaces of every small positive weight.  The exterior harmonic
expansion \cite[Theorem~1.17]{Bartnik1986}, followed by Schauder
estimates, gives
\[
 \phi=c+O_{2,\alpha}(r^{-\gamma})
 \quad\text{for every }0<\gamma<n-2,
 \qquad c>0.
\]
Indeed, the leading bounded harmonic mode is constant. After subtracting
it, there are no further homogeneous harmonic degrees between
\(2-n\) and zero.  Iterating the weighted estimate therefore gives
every decay rate strictly below \(n-2\).
In the dilated coordinates \(y:=c^{2/(n-2)}x\),
\[
 \check g_{ij}(y)-\delta_{ij}
 =O_{2,\alpha}\bigl(|y|^{-\min\{\tau,\gamma\}}\bigr).
\]
Choosing \(\gamma>(n-2)/2\) proves asymptotic flatness.

On each auxiliary end, the conformal formulas give
\[
 R_{\check g}=\phi^{-4/(n-2)}R_g,
 \qquad
 |R_{\check g}|\,dV_{\check g}=\phi^2|R_g|\,dV_g.
\]
Hence scalar curvature remains integrable there.  It is integrable on
the distinguished end by the density theorem and on the compact core
by smoothness.  This proves the lemma.
\end{proof}

\begin{proof}[Proof of Theorem~\ref{thm:main}]
Choose \(0<\beta_i<1/3\) with \(\beta_i\downarrow0\), and apply
Lemma~\ref{lem:finite-end-density} to obtain \(\check g_i\) satisfying
\[
 |m(\check g_i)-m_i|<\beta_i,
 \qquad
 (1-\beta_i)g_i\leq\check g_i\leq(1+\beta_i)g_i.
\]
The metrics \(\check g_i\) are complete.  Theorem~\ref{thm:positive-mass} gives \(0\leq m(\check g_i)\leq m_i+\beta_i\to0\).
The result for harmonic asymptotics therefore yields smooth connected
exterior regions \(\widehat E_i\) with
\[
 \mathcal H_{\check g_i}^{n-1}(\partial\widehat E_i)\longrightarrow0
\]
and, for every \(x_i\in\widehat E_i\),
\[
 (\widehat E_i,\widehat d_{\check g_i},x_i,dV_{\check g_i})
 \xrightarrow{\mathrm{pmGH}}
 (\mathbb R^n,d_{\mathrm{Eucl}},0,dx).
\]
The zero mass indices and the finitely many initial indices are treated
as in the proof under harmonic asymptotics.

Each \(\widehat E_i\) is closed with smooth boundary in the complete
manifold \((M_i,\check g_i)\), so its intrinsic metric is complete.
Since
\[
 (1+\beta_i)^{-1}\check g_i\leq g_i
 \leq(1-\beta_i)^{-1}\check g_i,
\]
Lemma~\ref{lem:bilinear-transfer}, with
\(\vartheta_i:=\beta_i/(1-\beta_i)\), transfers the convergence to
\(g_i\) for the same base points.  It also gives
\[
 \mathcal H_{g_i}^{n-1}(\partial\widehat E_i)
 \leq(1-\beta_i)^{-(n-1)/2}
 \mathcal H_{\check g_i}^{n-1}(\partial\widehat E_i)
 \longrightarrow0.
\]
The theorem follows.
\end{proof}

\end{document}